\let\oldtocsection=\tocsection
\let\oldtocsubsection=\tocsubsection
\let\oldtocsubsubsection=\tocsubsubsection
\renewcommand{\tocsection}[2]{\hspace{0em}\oldtocsection{#1}{#2}}
\renewcommand{\tocsubsection}[2]{\hspace{1em}\oldtocsubsection{#1}{#2}}
\renewcommand{\tocsubsubsection}[2]{\hspace{2em}\oldtocsubsubsection{#1}{#2}}
\theoremstyle{plain} 
\newtheorem{thm}{Theorem}[section]
\newtheorem{cor}[thm]{Corollary}
\newtheorem{prop}[thm]{Proposition}
\newtheorem{lem}[thm]{Lemma}
\theoremstyle{definition}
\theoremstyle{remark}
\newtheorem*{rems*}{Remarks}
\numberwithin{equation}{section}
\DeclareUrlCommand\DOI{}
\crefname{figure}{Figure}{Figures}
\theoremstyle{plain}
\newtheorem*{thm*}{Theorem}
\crefname{thm}{Theorem}{Theorems}
\crefname{cor}{Corollary}{Corollarys}
\newtheorem*{cor*}{Corollary}
\crefname{cor*}{Corollary}{Corollarys}
\crefname{lem}{Lemma}{Lemmas}
\crefname{prop}{Proposition}{Propositions}
\crefname{conj}{Conjecture}{Conjectures}
\newtheorem*{conj*}{Conjecture}
\crefname{conj*}{Conjecture}{Conjectures}
\crefname{defn}{Definition}{Definitions}
\theoremstyle{remark}
\newtheorem*{rem*}{Remark}
\newcommand{\input{symbols}}{\input{symbols}}
\def\addsymbol #1: #2#3{$#1$ \> \parbox{5.4in}{#2 \dotfill \pageref{#3}}\\} 
\def\addsymbolEND #1: #2#3{$#1$ \> \parbox{5.4in}{#2 \dotfill \pageref{#3}}}
\newcommand{\ds}{\displaystyle}
\newcommand{\fk}[1]{\mathfrak{#1}}
\newcommand{\cA}{\mathcal{A}}
\renewcommand{\bar}{\overline}
\newcommand{\C}{\mathbb{C}}
\newcommand{\cC}{\mathcal{C}}
\newcommand{\Cl}{\mathrm{Cl}}
\newcommand{\kd}{\mathfrak{d}}
\newcommand{\cD}{\mathcal{D}}
\newcommand{\kf}{\mathfrak{f}}
\newcommand{\km}{\mathfrak{m}}
\renewcommand{\pmod}[1]{\, (\mathrm{mod} {\, #1})}
\newcommand{\kn}{\mathfrak{n}}
\newcommand{\N}{\mathbb{N}}
\newcommand{\kp}{\mathfrak{p}}
\newcommand{\cP}{\mathcal{P}}
\newcommand{\kP}{\mathfrak{P}}
\newcommand{\cO}{\mathcal{O}}
\newcommand{\kq}{\mathfrak{q}}
\newcommand{\Q}{\mathbb{Q}}
\newcommand{\R}{\mathbb{R}}
\renewcommand{\Re}{\mathrm{Re}}
\def\Res{\mathop{\mathrm{Res}}}
\newcommand{\Z}{\mathbb{Z}}
\title{On the Least Prime Ideal and Siegel Zeros}
\author{Asif Zaman}
\thanks{The author was supported in part by an NSERC PGS-D scholarship.} 
\address{
Department of Mathematics, University of Toronto \\
Room 6290, 40 St. George St., M5S2E4, Toronto, ON, Canada}
\email{asif@math.toronto.edu}
\date{\today}
\begin{document}

\begin{abstract} Let $K$ be a number field, $\kq$ be an integral ideal, and $\Cl(\kq)$ be the associated ray class group. Suppose $\Cl(\kq)$ possesses a real exceptional character $\psi$, possibly principal, with real zero $\beta$. If $\beta$ is a Siegel zero and $\cC \in \Cl(\kq)$ satisfies $\psi(\cC) = 1$ then we show there exists a prime ideal $\kp \in \cC$ such that
\[
\N\kp \ll_{\delta} \big\{ n_K^{16n_K} \cdot d_K^{9.5} \cdot (\N\kq)^{9} \big\}^{1+\delta} e^{O_{\delta}(n_K)}
\]
where $n_K$ is the degree of $K/\Q$, $d_K = |\mathrm{disc}(K/\Q)|$ is the absolute discriminant of $K$, and $\N = \N^K_{\Q}$ is the absolute norm of $K$. All implicit constants are effective. 

A special case of this result is related to rational primes represented by certain binary quadratic forms. 
\end{abstract}

\maketitle

\tableofcontents

\section{Introduction}

Let $K$ be a number field, $\cO$ its ring of integers, and $\kq \subseteq \cO$ be an integral ideal. The (narrow) ray class group of $K$ modulo $\kq$, denoted $\Cl(\kq)$, is the quotient of fractional ideals of $K$ relatively prime to $\kq$ and principal ideals $(\alpha)$ such that $\alpha \equiv 1 \pmod{\kq}$ and $\alpha$ is totally positive.  For a given class $\cC \in \Cl(\kq)$, it has long been known that there are infinitely many prime ideals $\kp \in \cC$. Therefore it is natural to ask: 
\begin{center}
\emph{What is the least norm of a prime ideal $\kp \in \cC$?}
\end{center}
We refer to this question as the ``least prime ideal" problem. The Generalized Riemann Hypothesis (GRH) for Hecke $L$-functions implies for $\delta > 0$,
\begin{equation}
\N\kp \ll_{\delta} (d_K \N\kq)^{\delta} \cdot h(\kq)^{2+\delta}
\label{GRHBound}
\end{equation}
where $d_K = |\mathrm{disc}(K/\Q)|$ is the absolute discriminant of $K$, $\N = \N^K_{\Q}$ is the absolute norm of $K$, and $h(\kq) = \#\Cl(\kq)$ is the size of the ray class group. Fogels \cite{Fog4} was the first to give an unconditional answer showing 
\[
\N\kp \ll_K (\N\kq)^{C_K}
\]
but this bound is not entirely satisfactory because  the implied constant and exponent depend on $K$ in an unspecified manner. In his Ph.D. thesis work, Weiss \cite{Weiss} proved a $K$-uniform version of Fogels' result; that is, unconditionally
\begin{equation}
\N\kp \ll n_K^{An_K} \cdot d_K^B \cdot (\N\kq)^C 
\label{WeissBound}
\end{equation}
where $n_K = [K:\Q]$ is the degree of $K$ and $A,B,C > 0$ are absolute constants. Assuming GRH, one may take $(A,B,C) = (\delta,1+\delta,2+\delta)$ for $\delta > 0$. The focus of this paper is, in an exceptional case, to exhibit a bound like \eqref{WeissBound} with explicit exponents. 

Specializing to $K = \Q$ and $\kq = (q)$, the least prime ideal problem naturally corresponds to the least prime $p$ in an arithmetic progression $a \pmod{q}$. Linnik  \cite{Linnik1} famously showed  unconditionally that 
\begin{equation*}
p \ll q^L
\label{Linnik_Constant}
\end{equation*}
for some absolute constant $L > 0$ known as ``Linnik's constant" and where the implicit constant is effective. Conjecturally, $L=1+\delta$ for any $\delta > 0$ is admissible and GRH implies $L=2+\delta$ is acceptable. Since Linnik, many authors have computed admissible values of $L$ (see the landmark paper of Heath-Brown \cite{HBLinnik} for details) with the current world record being $L=5.2$ by Xylouris \cite{Xylouris}. 

Thus far, a crucial ingredient to all proofs computing Linnik's constant is the handling of a putative real zero 
\[
\beta = 1 - \frac{1}{\eta \log q}
\] 
of a Dirichlet $L$-function attached to a quadratic Dirichlet character $\psi \pmod{q}$. If $\eta \geq 3$ we refer to this scenario as the \emph{exceptional case} and the zero $\beta$ as an \emph{exceptional zero}. If additionally $1/\eta = o(1)$, then we call $\beta$ a \emph{Siegel zero} which conjecturally does not exist. Most authors adapted Linnik's original proof and established a quantitative Deuring-Heilbronn phenomenon which is a strong form of zero repulsion for $\beta$. However, in the exceptional case, the best bound thus far on Linnik's constant involves sieve methods and was pioneered by Heath-Brown \cite{HBSiegel}. He showed, with effective implicit constants, that $L=3+\delta$ is an admissible value provided $\eta \ge \eta(\delta)$ which bests the aforementioned unconditional $L=5.2$. Even more astonishingly, Heath-Brown showed that the GRH bound $L=2+\delta$ is an admissible value provided $\eta \geq \eta(\delta)$ although the implied constants are ineffective. Sieve techniques are indeed very advantageous in the exceptional case. To further emphasize this point, we remark that Friedlander and Iwaniec \cite{FI_Siegel-LPAP} proved, under some additional technical assumptions, that $L=2-\tfrac{1}{59}$ is admissible when a Siegel zero exists. This surpasses GRH! 

Now, let us describe the exceptional case in the context of the least prime ideal problem for a number field $K$. A character $\chi \in \Cl(\kq)$ is known as a Hecke character, denoted $\chi \pmod{\kq}$. We may pullback its domain and extend it by zero to all integral ideals of $K$; that is, $\chi(\kn) = 0$ if $(\kn,\kq) \neq (1)$. Then its  associated Hecke $L$-function is defined to be:
\[
L(s,\chi) = \sum_{ \kn \subseteq \cO} \chi(\kn) (\N{\kn})^{-s} = \prod_{\kp} \Big(1-\frac{\chi(\kp)}{(\N{\kp})^s} \Big)^{-1} 
\]
for $\sigma > 1$ where $s = \sigma+it$. These are the usual Dirichlet $L$-functions modulo $q$ when $K=\Q$ and $\kq = (q)$. Naturally, the zeros of Hecke $L$-functions are intimately related to the distribution of prime ideals $\kp$ of $K$ within classes of $\Cl(\kq)$. It is well-known that Hecke $L$-functions admit a meromorphic continuation to $\C$ with only one simple pole at $s=1$ if $\chi$ is the principal character. Further, they possess a (nearly) zero-free region of the form
\[
\sigma \geq 1 - \frac{c}{\log( n_K^{n_K} d_K \N\kq)}, \qquad |t| \leq 1,
\]
where $c > 0$ is an absolute constant.  However, just as with Dirichlet $L$-functions, exactly one real zero $\beta$ attached to a real character $\psi \pmod{\kq}$ cannot be eliminated from this region -- no matter how small $c$ is chosen. See for example \cite[Lemma 2.3]{LMO} or, for an explicit version, \cite{Zaman2015} where $c=0.0875$ is shown to be admissible. We emphasize that $\psi$ may be quadratic or principal.

For the remainder of the paper, suppose $\psi \pmod{\kq}$ is a real Hecke character with a real zero
\begin{equation}
\beta = 1 - \frac{1}{\eta \log( n_K^{n_K} d_K \N\kq)}
\label{RealZero}
\end{equation}
where $\eta \geq 20$; that is, $\beta$ is an \emph{exceptional zero} of the \emph{exceptional character} $\psi$.  If $1/\eta = o(1)$ then we shall call  $\beta$ a \emph{Siegel zero}.  For a ray class $\cC \in \Cl(\kq)$ satisfying $\psi(\cC) = 1$, we establish an explicit effective $K$-uniform bound for the size of the least prime ideal $\kp \in \cC$ provided $\beta$ is a Siegel zero. 

\begin{thm} \label{Theorem1-LPI} Let $K$ be a number field and $\kq$ an integral ideal. Suppose $\psi \pmod{\kq}$ is a real Hecke character such that $L(s,\psi)$ has a real zero $\beta$ as in \eqref{RealZero}.  Let $\cC \in \Cl(\kq)$  satisfy $\psi(\cC) = 1$ and $\delta > 0$ be given.  Then there exists a prime ideal $\kp \in \cC$ satisfying
\[
\N\kp \ll_{\delta}  \big\{ n_K^{An_K} \cdot d_K^{B} \cdot (\N\kq)^{C} \cdot h(\kq)^2 \big\}^{1+\delta} e^{O_{\delta}(n_K)} 
\]
provided $\eta \geq \eta(\delta)$ and where
\begin{equation}
(A,B,C) = 
\begin{cases}
(16, 6+\tfrac{5}{n_K}, 5 + \tfrac{2}{n_K}) & \text{if $\psi$ is quadratic},  \\
(6, 3 + \tfrac{4}{n_K}, 3) & \text{if $\psi$ is principal}. 
\end{cases}
\label{Theorem1-LPI_exponents}
\end{equation}
All implicit constants are effective. 
\end{thm}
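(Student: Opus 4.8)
The plan is to deduce \Cref{Theorem1-LPI} from the positivity, for a suitable $x$ of the advertised size, of the localized prime-ideal count $\Psi_{\cC}(x)-\Psi_{\cC}(x/2)$, where $\Psi_{\cC}(y)=\sum_{\N\kn\le y,\ \kn\in\cC}\Lambda_K(\kn)$ and $\Lambda_K$ is the von Mangoldt function of $K$; for brevity put $Q=n_K^{n_K}d_K\N\kq$. If that difference exceeds $2x^{1/2}\log x$, then after discarding the prime-power contribution there is a prime ideal $\kp\in\cC$ with $x/2<\N\kp\le x$. Orthogonality of the characters of $\Cl(\kq)$ gives $\Psi_{\cC}(y)=h(\kq)^{-1}\sum_{\chi\,(\mathrm{mod}\,\kq)}\bar\chi(\cC)\,\Psi(y,\chi)$ with $\Psi(y,\chi)=\sum_{\N\kn\le y}\Lambda_K(\kn)\chi(\kn)$, and the explicit formula (Perron, a smooth cut-off, a contour shift) evaluates each $\Psi(y,\chi)$ as $y-\sum_{\rho}y^{\rho}/\rho$ when $\chi$ is principal --- so that $L(s,\chi)$ inherits the pole of $\zeta_K$ --- and as $-\sum_{\rho}y^{\rho}/\rho$ otherwise, plus a controlled archimedean and truncation error, the $\rho$ running over nontrivial zeros of $L(s,\chi)$. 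The pole contributes $x/(2h(\kq))$; the exceptional zero $\beta$ of $L(s,\psi)$ contributes $-h(\kq)^{-1}\bar\psi(\cC)(x^{\beta}-(x/2)^{\beta})/\beta$, which \emph{because $\psi(\cC)=1$} is negative and partly cancels the pole, leaving a main term
\[
\frac{1}{2h(\kq)}(x-x^{\beta})\;+\;O\!\left(\frac{x(1-\beta)}{h(\kq)}\right)\;\gg\;\frac{x\log x}{\eta\,h(\kq)\,\log Q}
\]
that is positive but small. Everything reduces to bounding the remaining contributions by half of this.

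Already the prime-power correction $2x^{1/2}\log x$ is beaten by this main term only if $x\gg h(\kq)^{2}(\eta\log Q)^{2}$: this is the source of the factor $h(\kq)^{2}$, the same floor that yields the $h(\kq)^{2+\delta}$ of the GRH bound \eqref{GRHBound}. It then remains to control the sum over all nontrivial zeros $\rho\neq\beta$ of all $L(s,\chi)$, $\chi\,(\mathrm{mod}\,\kq)$. For this I would invoke two effective, $K$-uniform inputs for the family $\{L(s,\chi)\}_{\chi\,(\mathrm{mod}\,\kq)}$: (i) a log-free zero density estimate of the shape $\sum_{\chi}N(\sigma,T,\chi)\ll(Q\,T^{n_K})^{c_1(1-\sigma)}$ for $\tfrac12\le\sigma\le1$, and (ii) the quantitative Deuring--Heilbronn phenomenon: since $\beta$ is a Siegel zero with $1-\beta$ as in \eqref{RealZero}, every $\rho\neq\beta$ of bounded height is repelled away from the line, $1-\Re\rho\gg\log\eta/\log Q=:\sigma_0$. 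Using a smooth weight whose Mellin kernel decays rapidly in $\Im\rho$, the zeros with $\Re\rho\le\tfrac12$ are subsumed into the $x^{1/2}$-term above; for $\tfrac12<\Re\rho<1-\sigma_0$, estimate (i) caps the count near the line by $Q^{c_1\sigma_0}\ll\eta^{O(1)}$ --- a fixed power of $\eta$, crucially \emph{not} of $h(\kq)$, since the large sieve over the whole family is being used --- so that after the smoothing the factor $h(\kq)$ cancels and the whole zero sum is $\ll h(\kq)^{-1}x\,\eta^{-c(C-c_1)}(\log Q)^{O(n_K)}$ as soon as $x$ is as large as $(Q^{C}h(\kq)^{2})^{1+\delta}$; and by (ii) there is no zero with $\Re\rho\ge 1-\sigma_0$ other than $\beta$.

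Balancing against the main term, the zero sum is below half of it provided $C$ exceeds $c_1+O(1)$ (so the power of $\eta$ is genuinely negative), $\eta\ge\eta(\delta)$ is large enough to swallow that power, and the overall exponent $(1+\delta)$ together with the factor $e^{O_\delta(n_K)}$ absorb the $(\log Q)^{O(n_K)}$ losses --- which is precisely why the theorem carries a $(1+\delta)$ and a threshold on $\eta$ rather than being stated in closed form. The asymmetric exponents $(A,B,C)$ of \eqref{Theorem1-LPI_exponents}, and the split into two cases, reflect how the degree (through the archimedean $\Gamma$-factors, which is also the origin of the leftover $e^{O_\delta(n_K)}$), the discriminant, and the norm $\N\kq$ weight the analytic conductors of $\zeta_K$ and of $L(s,\psi)$, together with the Siegel-type lower bound for $L(1,\psi)$ needed to make \eqref{RealZero} quantitatively effective; when $\psi=\psi_0$ is principal the exceptional zero lies on $\zeta_K$ itself, whose conductor omits $\N\kq$, and it is this that lowers $(A,B,C)$ from $(16,6+\tfrac{5}{n_K},5+\tfrac{2}{n_K})$ to $(6,3+\tfrac{4}{n_K},3)$.

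The principal obstacle is the first of these inputs in fully explicit, $K$-uniform form with constants sharp enough to land the stated exponents: proving --- or extracting and then optimizing, out of Weiss-type and Zaman-type arguments --- the log-free zero density estimate and the quantitative Deuring--Heilbronn phenomenon for Hecke $L$-functions over an arbitrary number field, while carefully accounting for the archimedean $\Gamma$-factors, the primes ramifying in $\kq$, and the difference between the narrow and the ordinary ray class group. The delicate feature is that, because $\psi(\cC)=1$, the main term has already been cut down by a full factor $\eta$, so the zero estimates must be applied with essentially no slack; this is the ``hard case'' of Heath-Brown's work \cite{HBSiegel} on the least prime in an arithmetic progression when a Siegel zero is present, carried out here uniformly over all number fields and all ray class groups.
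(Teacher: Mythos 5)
Your route is not the paper's, and as it stands it has a genuine gap precisely at the point you defer. The paper never touches the explicit formula, log-free zero density estimates, or the Deuring--Heilbronn phenomenon: it deduces \cref{Theorem1-LPI} from \cref{Theorem1}, a sieve-theoretic lower bound for $\#\{\kp\in\cC:\N\kp<x\}$. There the Siegel zero is exploited as an \emph{asset}, not an obstacle: because $\beta$ is exceptional, $\sum_{\psi(\kp)=1,\,\N\kp<z}1/\N\kp\le 1+\delta$ (\cref{AS-PsiSmallPrimes}), so sieving the sequence $a_{\kn}=\mu_K^2(\kn)\lambda(\kn)\mathbf{1}\{\kn\in\cC\}e^{-\N\kn/x}$ is a \emph{zero-dimensional} problem (\cref{AS-Dimension}), handled by the Fundamental Lemma (\cref{FundamentalLemma}). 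The exponents $(A,B,C)$ then come from entirely different inputs than the ones in your sketch: the level of distribution is limited by convexity bounds for $L(s,\chi)L(s,\chi\psi)$ (resp.\ $L(s,\chi)$) in the remainder terms of \cref{AS-LocalDensities}, and the thresholds on $x$ involve Stark's effective lower bound for $\kappa_\psi$ (\cref{kappa_LB_effective}) and $h(\kq)\le e^{n_K}d_K^{1/2}\N\kq$ (\cref{PR-RayClassGroup}); the quadratic/principal dichotomy is the conductor $d_K^2\N\kq$ versus $d_K$ in those convexity bounds, not a statement about where the exceptional zero lives. The resulting main term is a genuine positive proportion $c_\psi\Delta_\psi\kappa_K\varphi_K(\kq)x/(\N\kq\,h(\kq))$, with no loss of a factor $\eta^{-1}$.

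By contrast, your main term is cut down to $\asymp x(1-\beta)\log x/h(\kq)\asymp x/(\eta h(\kq))\cdot\log x/\log Q$, and you must beat the zero sum with a $K$-uniform, fully explicit log-free density estimate (constant $c_1$) and an explicit Deuring--Heilbronn repulsion (constant $c_2$), concluding ``provided $C$ exceeds $c_1+O(1)$.'' That is exactly where the proof fails to exist: at this level of uniformity the only available input of this type is Weiss's, whose constants are unspecified (this is why \eqref{WeissBound} has no explicit exponents), and explicit versions of such estimates for Hecke $L$-functions yield Linnik-type exponents orders of magnitude larger than $9$ or $9.5$; there is no mechanism in your argument that could ever produce the stated values $(16,6+\tfrac5{n_K},5+\tfrac2{n_K})$ and $(6,3+\tfrac4{n_K},3)$, which are tied to convexity and Stark-type quantities rather than to $c_1,c_2$. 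Indeed the paper's introduction makes this point explicitly: in the exceptional case sieve methods (Heath-Brown, Friedlander--Iwaniec) beat the Linnik/Deuring--Heilbronn machinery, and that is the whole reason the theorem is provable with these exponents. So while your outline is a coherent description of the classical approach, it does not constitute a proof of this theorem, and filling the acknowledged ``principal obstacle'' with currently available effective inputs would prove a much weaker statement than the one claimed.
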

\begin{rems*}  $ $
\begin{enumerate}
\item 
\label{Theorem1-LPI_R_Simple} The factor of $h(\kq)^2$ is natural in light of \eqref{GRHBound} but one may prefer a bound similar to \eqref{WeissBound}. Using \cref{PR-RayClassGroup} allows us to give the alternative bound
\[
\N\kp \ll_{\delta} \big\{ n_K^{A'n_K} \cdot d_K^{B'} \cdot (\N\kq)^{C'} \big\}^{1+\delta} e^{O_{\delta}(n_K)}
\]
with 
	\[
	(A',B',C') = 
\begin{cases}
(16, 7+\tfrac{5}{n_K}, 7 + \tfrac{2}{n_K}) & \text{if $\psi$ is quadratic},  \\
(6, 4 + \tfrac{4}{n_K}, 5) & \text{if $\psi$ is principal}. 
\end{cases}
	\]
	Even more simply, $(A',B',C') = (16, 9.5, 9)$ is admissible in all cases. 

\item \label{Theorem-LPI_R-Q} For a point of reference, consider the estimate in the special case $K=\Q$ and $\kq = (q)$. If there exists a quadratic Dirichlet character $\psi \pmod{q}$ with real zero $\beta = 1 - \frac{1}{\eta \log q}$ and $\psi(a) = 1$ for $(a,q) = 1$, then \cref{Theorem1-LPI} implies there exists a prime $p \equiv a \pmod{q}$  such that 
	\[
	p \ll_{\delta} q^{9+\delta}
	\]
	provided $\eta \geq \eta(\delta)$. The exponent $L = 9+\delta$ is comparable to the unconditional $L=5.2$ by Xylouris  \cite{Xylouris} and to the effective Siegel zero case $L=3+\delta$ by Heath-Brown \cite{HBSiegel}.
	
\item  
\label{Theorem1-LPI_R_Ineffective}
By a straightforward modification, one can improve  \cref{Theorem1-LPI} by appealing to the Brauer-Siegel Theorem (see \cref{kappa_LB_ineffective}) from which it follows
	\[
	(A,B,C) = 
\begin{cases}
(6, 6, 5) & \text{if $\psi$ is quadratic}, \\
(2, 3, 3) & \text{if $\psi$ is principal}, 
\end{cases}
	\]
	or as in Remark 1,
	\[
	(A',B',C') = 
\begin{cases}
(6, 7, 7) & \text{if $\psi$ is quadratic}, \\
(2, 4, 5) & \text{if $\psi$ is principal}, 
\end{cases}
	\]
	but the implicit constants are \emph{ineffective}. 
\end{enumerate}
\end{rems*}
If $K$ is an imaginary quadratic field of discriminant $D$ then the ray class group $\Cl(\cO)$ has a well-known\footnote{See \cite[Theorem 7.7]{Cox} for example.} correspondence with the group of  form classes of primitive positive-definite integral  binary quadratic forms of discriminant $D$. Under this bijection, such a  form $Q(x,y)$ represents an integer $m$ if and only if its corresponding ray class $\cC \in \Cl(\cO)$ contains an integral ideal $\km$ satisfying $\N\km = m$. With this interpretation, \cref{Theorem1-LPI} has an analogous result in this special case. 

\begin{cor} \label{Theorem1-LPI_QuadraticForm} Let $K$ be the imaginary quadratic field of discriminant $D \geq 1$.  Suppose $\psi \pmod{\cO}$ is a real Hecke character such that $L(s,\psi)$ has a real zero $\beta$ as in \eqref{RealZero}.  Let $\delta > 0$ and $Q(x,y)$ be a primitive integral positive-definite binary quadratic form of discriminant $D$ in natural correspondence with a ray class $\cC \in \Cl(\cO)$ satisfying $\psi(\cC) = 1$. 

Then there exists a rational prime $p$ such that $p = Q(x,y)$ has a solution $(x,y) \in \Z^2$ and 
\[
p \ll_{\delta} 
\begin{cases}
D^{9.5 + \delta} 
& \text{if $\psi$ is quadratic}, \\
D^{6+\delta}   & \text{if $\psi$ is principal}, 
\end{cases}
\]
provided $\eta \geq \eta(\delta)$. All implicit constants are effective. 
\end{cor}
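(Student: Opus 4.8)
The plan is to deduce \cref{Theorem1-LPI_QuadraticForm} directly from \cref{Theorem1-LPI}, by specializing the parameters to $K$ imaginary quadratic with $\kq = \cO$ and then invoking the classical correspondence between ideal classes and form classes. First I would record the relevant data: $n_K = 2$, $d_K = |\mathrm{disc}(K/\Q)| = D$, and $\N\kq = \N\cO = 1$. Thus in \cref{Theorem1-LPI} the factors $n_K^{An_K}$, $n_K^{A'n_K}$ and $e^{O_\delta(n_K)}$ are all $O_\delta(1)$ and may be absorbed into the implied constant, while every factor $(\N\kq)^{C}$ or $(\N\kq)^{C'}$ equals $1$.

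Next I would apply \cref{Theorem1-LPI} in the shape given by the first remark following it, where the $h(\kq)^2$ factor has been folded into the exponents $(A',B',C')$ by means of \cref{PR-RayClassGroup}; alternatively one retains $h(\kq)^2$ and uses the classical estimate $h(\cO) \ll \sqrt{D}\,\log D$ for the class number of an imaginary quadratic field, so that $h(\cO)^2 \ll_\delta D^{1+\delta}$. With $n_K = 2$ the relevant exponent is $B' = 7 + \tfrac{5}{2} = 9.5$ if $\psi$ is quadratic and $B' = 4 + 2 = 6$ if $\psi$ is principal, so \cref{Theorem1-LPI} yields a prime ideal $\kp \in \cC$ with $\N\kp \ll_\delta D^{B'(1+\delta)}$ whenever $\eta \geq \eta(\delta)$. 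Since $\delta > 0$ is arbitrary, replacing $\delta$ by $\delta/B'$ turns this into $\N\kp \ll_\delta D^{9.5 + \delta}$, respectively $D^{6+\delta}$.

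Finally I would translate the conclusion. Because $K$ is imaginary quadratic, $\Cl(\cO)$ is the ordinary ideal class group, and by \cite[Theorem 7.7]{Cox} it is in bijection with the classes of primitive positive-definite integral binary quadratic forms of discriminant $D$ under which a form represents an integer $m$ precisely when its class contains an integral ideal of norm $m$. Applying this to $\cC$ and $\kp$: if $\kp$ has residue degree one then $\N\kp = p$ is a rational prime in the desired range and $\cC$ contains the ideal $\kp$ of norm $p$, so $Q(x,y)=p$ has a solution $(x,y)\in\Z^2$. The single delicate point is that $\kp$ may indeed be taken of residue degree one: a prime ideal of residue degree $\geq 2$ and norm at most $X$ is necessarily $p\cO$ with $p$ an inert rational prime $\leq \sqrt X$, hence principal, so this possibility is vacuous unless $\cC$ is the principal class, and even then it is ruled out by the lower bound for $\sum_{\N\kp\leq X,\,\kp\in\cC}(\cdots)$ that underlies the proof of \cref{Theorem1-LPI} (only $O(\sqrt X)$ prime ideals of norm $\leq X$ have residue degree $>1$). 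I expect all of the genuine difficulty to sit inside \cref{Theorem1-LPI}; the corollary is this specialization plus the form-class dictionary, with the residue-degree observation as the only subtlety.
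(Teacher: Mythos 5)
Your proof is correct and takes essentially the same route as the paper's (essentially one-line) argument: specialize \cref{Theorem1-LPI} via its first remark to $n_K=2$, $d_K=D$, $\N\kq=1$, giving exponents $B'=9.5$ resp.\ $6$, and then translate through the form-class correspondence. You are in fact more careful than the paper on the residue-degree-one point; to close it completely one should note that the lower bound of \cref{Theorem1} exceeds the $O(\sqrt{x})$ count of degree-two prime ideals because \cref{kappa_LB_effective} and \cref{PR-RayClassGroup} give $\kappa_\psi \gg D^{-1/2}$ and $h(\cO)\ll \sqrt{D}$, so the count is $\gg x D^{-1-\delta} \gg \sqrt{x}$ in the stated range.
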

\begin{rems*} $ $
\begin{enumerate}
\item As per Remark 3 following \cref{Theorem1-LPI}, one can sharpen the bound in \cref{Theorem1-LPI_QuadraticForm} to
\[
p \ll_{\delta} 
\begin{cases}
D^{7 + \delta} 
& \text{if $\psi$ is quadratic}, \\
D^{4+\delta}   & \text{if $\psi$ is principal}, 
\end{cases}
\]
but the implicit constants are rendered ineffective. 
\item For frame of reference, one may indirectly compare \cref{Theorem1-LPI_QuadraticForm} with an unconditional bound for $p$ on average due to Ditchen \cite{Ditchen}.  Informally speaking, he showed forms of discriminant $D \equiv 0 \pmod{8}$  represent some prime $p$ satisfying
\[
p \ll_{\delta} \begin{cases}  D^{20/3 + \delta} & \text{on average over discriminants $D$},  \\ 
D^{3+\delta} & \text{on average over discriminants $D$ and form classes}.
\end{cases}
\]
As far as the author is aware, \cref{Theorem1-LPI_QuadraticForm} is the first result to bound the least prime represented by quadratic forms with an explicit exponent  uniformly over all discriminants, albeit conditionally in an exceptional case. 
\end{enumerate}
\end{rems*}
\cref{Theorem1-LPI,Theorem1-LPI_QuadraticForm} are both straightforward consequences of the following quantitative lower bound for the number of prime ideals in a given ray class. Here $\kappa_K$ is the residue at $s=1$ of the Dedekind zeta function $\zeta_K(s)$ and 
\[
\varphi_K(\kq) = \N\kq \prod_{\kp \mid \kq} \Big(1- \frac{1}{\N\kp}\Big)
\]
is the generalized Euler $\varphi$-function of $K$. 
\begin{thm} \label{Theorem1} Let $K$ be a number field and $\kq$ an integral ideal. Suppose $\psi \pmod{\kq}$ is a real Hecke character such that $L(s,\psi)$ has a real zero $\beta$ as in \eqref{RealZero}.  Let $\cC \in \Cl(\kq)$  satisfy $\psi(\cC) = 1$.  For $\delta > 0$, assume $\eta \geq \eta(\delta)$ and $M_{\delta} > 0$ are sufficiently large.  Further assume
\begin{equation}
e^{M_{\delta}n_K} \big\{ n_K^{A n_K} \cdot d_K^{B} \cdot (\N\kq)^{C} \cdot h(\kq)^2  \big\}^{1+\delta} \leq x  \leq  e^{M_{\delta}n_K}(n_K^{n_K} d_K \N\kq)^{100},
\label{Theorem1-xRange}
\end{equation}
where $(A,B,C)$ are given by \eqref{Theorem1-LPI_exponents}. Then
\begin{equation}
\label{Theorem1_LB}
 \#\{ \kp \in \cC \text{ prime} : \N\kp <x \}  \geq  c_{\psi} \Delta_{\psi} \cdot \kappa_K \frac{\varphi_K(\kq) }{\N\kq}   \cdot \frac{x}{h(\kq)}
\end{equation}
where
\begin{equation*}
\Delta_{\psi} =  
\begin{cases}
\ds L(1,\psi)  \prod_{\psi(\kp) = 1} \Big(1-\frac{3}{\N\kp^2} + \frac{2}{\N\kp^3} \Big) \prod_{\psi(\kp)=-1} \Big( 1 - \frac{1}{\N\kp^2} \Big) & \text{if $\psi$ is quadratic}, \\
\ds \prod_{\kp \nmid \kq} \Big( 1 - \frac{1}{\N\kp^2} \Big) & \text{if $\psi$ is principal},
\end{cases}
\end{equation*}
and
\begin{equation*}
c_{\psi} =
\begin{cases}
0.00466 & \text{if $\psi$ is quadratic}, \\
 0.0557 & \text{if $\psi$ is principal}. 
\end{cases}
\end{equation*}
All implicit constants are effectively computable. 

\end{thm}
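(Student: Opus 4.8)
The plan is to transport Heath-Brown's sieve method \cite{HBSiegel} from $\Q$ to the ray class group $\Cl(\kq)$; I describe the quadratic case and indicate below the changes needed when $\psi$ is principal. The engine is the non-negative multiplicative \emph{sieve weight} $f:=\1\ast\psi$, so that $\sum_{\kn}f(\kn)\N\kn^{-s}=\zeta_K(s)L(s,\psi)$ and $f(\kp)=1+\psi(\kp)$ at a prime ideal $\kp$. Two properties are decisive: (i) $f(\kp)=2$ precisely when $\psi(\kp)=1$ — in particular for every prime $\kp\in\cC$, since $\psi(\cC)=1$ — while on general ideals $f$ is supported on those whose radical is a product of primes $\kp$ with $\psi(\kp)\ge0$; and (ii) the exceptional zero makes the residue $\kappa_K L(1,\psi)=\mathrm{Res}_{s=1}\zeta_K(s)L(s,\psi)$ small — indeed $1-\beta$ governs $L(1,\psi)$, which in turn makes $\sum_{\N\kp<y}f(\kp)/\N\kp$ small for $y$ a fixed power of $n_K^{n_K}d_K\N\kq$ once $\eta$ exceeds a threshold depending only on that power; this is exactly the hypothesis $\eta\ge\eta(\delta)$.

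First I would reduce \cref{Theorem1} to the sharp mean value
\[
\sum_{\substack{\kn\in\cC\\\N\kn<x}}f(\kn)=\bigl(2+o(1)\bigr)\,\frac{\kappa_K}{h(\kq)}\cdot\frac{\varphi_K(\kq)}{\N\kq}\cdot L(1,\psi)\cdot x
\]
uniformly for $x$ in the range \eqref{Theorem1-xRange}. To prove this, expand the indicator of $\cC$ by orthogonality of Hecke characters modulo $\kq$,
\[
\sum_{\substack{\kn\in\cC\\\N\kn<x}}f(\kn)=\frac1{h(\kq)}\sum_{\chi\bmod\kq}\overline{\chi}(\cC)\cdot\frac1{2\pi i}\int_{(c)}L(s,\chi)\,L(s,\psi\chi)\,\frac{x^{s}}{s}\,ds,
\]
and shift each contour past $s=1$. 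Only $\chi=\1$ and (because $\psi$ is real) $\chi=\psi$ contribute a pole there, both with $\overline{\chi}(\cC)=1$, which produces the displayed main term; the exceptional zero $\beta$ is a zero, not a pole, of the integrand and so contributes nothing. The remaining integrals on a line $\Re s=\sigma_0<1$ I would bound with a standard zero-free region for the $L(s,\chi)$ and $L(s,\psi\chi)$ (with $\beta$ the unique real exception), convexity estimates, and large-sieve/mean-value inequalities for Hecke $L$-functions, all made explicit in $n_K$, $d_K$, $\N\kq$. It is here that the lower endpoint of \eqref{Theorem1-xRange} is forced — $x$ must be large enough that every error, including the one summed over the remaining $h(\kq)-2$ characters, is beaten by the main term, which produces the factor $h(\kq)^{2}$ familiar from the GRH bound \eqref{GRHBound} — and \cref{PR-RayClassGroup} is invoked afterwards to exchange $h(\kq)$ for powers of $d_K$ and $\N\kq$ in \cref{Theorem1-LPI}.

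The sieve then completes the argument. Partition the left-hand side above according to whether $\kn$ is prime, a proper prime power, or has at least two prime factors: the first part equals $2\,\#\{\kp\in\cC:\N\kp<x\}$, the prime-power part is $O(n_K\sqrt{x})$ and is absorbed by the range, and the ``composite'' part I would bound from above by an upper-bound sieve (Selberg, or the fundamental lemma), writing each composite $\kn$ as $\kp\km$ with $\kp$ its least prime factor and $\km$ free of primes of norm below $\N\kp$. By property (ii) the $f$-mass of these composites is at most $(1-\varepsilon)$ times that of the main term, for some fixed $\varepsilon>0$, as soon as $\eta\ge\eta(\delta)$; hence the prime part is a positive proportion of the main term and \eqref{Theorem1_LB} follows. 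Tracking the exact Euler factors of $f$ — the local series $\sum_{k\ge0}f(\kp^{k})\N\kp^{-k}$ being $(1-\N\kp^{-1})^{-2}(1+2\N\kp^{-1}+\cdots)$ when $\psi(\kp)=1$ and $(1-\N\kp^{-2})^{-1}$ when $\psi(\kp)=-1$ — is what yields the products in $\Delta_\psi$, and a deliberately conservative treatment of the $(1-\varepsilon)$ margin gives the stated constant $c_\psi$.

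For principal $\psi$ the same scheme runs with $f=\1\ast\psi$ (the ideal divisor function away from $\kq$), except that $\zeta_K(s)L(s,\psi)$ now has a double pole at $s=1$, the exceptional zero $\beta$ is a Siegel zero of $\zeta_K$ itself, $\Delta_\psi$ becomes a genuine convergent Euler product, and the exponents and $c_\psi$ improve to the values displayed in \eqref{Theorem1-LPI_exponents} and the statement. The hard part, I expect, is not a single step but the uniformity bookkeeping throughout: keeping the zero-free region, the convexity and moment bounds for Hecke $L$-functions, and the sieve remainder terms all simultaneously explicit and uniform in $n_K$, $d_K$, $\N\kq$, and quantifying the $(1-\varepsilon)$ composite margin precisely enough to produce both an effective threshold $\eta(\delta)$ and a numerically usable $c_\psi$ — all while staying inside the window \eqref{Theorem1-xRange}.
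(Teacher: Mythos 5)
Your overall architecture (the divisor-type weight $\1\ast\psi$, orthogonality over Hecke characters, smooth/contour-shift evaluation with convexity bounds, then a sieve to isolate primes) is in the same spirit as the paper, but the final positivity step has a genuine gap, and it sits exactly where the real difficulty of the theorem lies. Your ``property (ii)'' asserts that the Siegel zero makes $\sum_{\N\kp<y}f(\kp)/\N\kp$ \emph{small} for $y$ any fixed power of $n_K^{n_K}d_K\N\kq$. That is not available (and is false in general): what one can prove below such a $y$ is only \emph{boundedness}, namely $\sum_{\N\kp<y,\,\psi(\kp)=1}1/\N\kp\le 1+\delta$ (\cref{AS-PsiSmallPrimes}, i.e.\ zero sieve dimension with constant $C_{\psi}=e^{2+\delta}$); genuine smallness $O((1-\beta)\log x)=O(\eta^{-1})$ holds only for primes of norm \emph{above} the threshold $z\asymp(\kappa_{\psi}^{-1}+1)^{1+\delta}W_{\psi}^{1/2+\delta}$ (\cref{AS-PsiLargePrimes}), and that threshold is itself comparable to a power of the conductor. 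Your positivity argument --- sharp mean value minus an upper-bound sieve over composites $\kn=\kp\km$ fibred by the least prime factor $\kp$ --- leans entirely on that false smallness. With only the boundedness that is actually provable, composites whose least prime factor lies below $z$ can carry up to $1-V(z)\approx 1-e^{-2}$ of the total mass, and every upper-bound sieve applied to the fibres loses a factor $1+E_1>1$ (for the zero-dimensional beta sieve at the accessible sifting variables this factor is about $3$); the resulting upper bound on the composite mass can therefore exceed the main term, and no positive lower bound for the prime count follows. (For least prime factors near $\sqrt{x}$ the fibre level $D/\N\kp$ even drops below the sifting level, so the sieve bound degenerates further; it is precisely the Siegel zero, not the sieve, that must kill that range.)

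The paper's proof is organized to dodge exactly this obstruction: it never tries to upper-bound the composites with small least prime factor. Instead it applies the \emph{lower-bound} beta sieve (\cref{FundamentalLemma}) to the whole (exponentially weighted) sequence sifted up to $z$, where positivity of $1-E_0(C_{\psi};\tau)$ is the delicate point and forces $\tau>5$ (quadratic) resp.\ $\tau>3$ (principal) --- this is where the exponents $(A,B,C)$ in \eqref{Theorem1-LPI_exponents} come from --- and then removes the terms with a prime factor in $[z,\sqrt{x}]$ via a Buchstab identity, using the upper-bound sieve only there, where the $O(1)$ sieve loss is harmless because $\sum_{z\le\N\kp<\sqrt{x},\,\psi(\kp)=1}1/\N\kp\ll\eta^{-1}$. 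Finally the exponential weights are converted to a sharp cutoff by comparing the sequences at scales $x$ and $x/y$ and optimizing $y$, which is what produces the numerical constants $c_{\psi}$. To repair your argument you would need either to replace ``total minus sieved composites'' by such a lower-bound sieve plus Buchstab structure, or to find a substitute for the unavailable smallness of the small-prime sum; as written, the claim that the composite $f$-mass is at most $(1-\varepsilon)$ of the main term is exactly the content you have not proved.
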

\begin{rems*} $ $
\begin{enumerate}
	\item Bounding $h(\kq)$ by \cref{PR-RayClassGroup}, we see that \eqref{Theorem1-xRange} contains the interval
	\[
	e^{M_{\delta}'n_K} \big\{ n_K^{A' n_K} \cdot d_K^{B'} \cdot (\N\kq)^{C'}  \big\}^{1+\delta} \leq x  \leq  e^{M_{\delta}n_K}(n_K^{n_K} d_K \N\kq)^{100}. 
	\]
	where $(A',B',C')$ are given by Remark 1 following \cref{Theorem1-LPI} and $M_{\delta}' = M_{\delta} + 2+2\delta$. 
	\item According to Remark 3 following \cref{Theorem1-LPI}, one can widen the lower bound of interval \eqref{Theorem1-xRange} using the ineffective Brauer-Siegel Theorem. 
	\item By obvious modifications to the proof, one can easily obtain an upper bound of the same form as \eqref{Theorem1_LB}.   That is, for the same range as \eqref{Theorem1-xRange}, one can show 
	\[
	 \#\{ \kp \in \cC \text{ prime} : \N\kp <x \} \leq \tilde{c}_{\psi}  
	 \Delta_{\psi} \cdot \kappa_K \frac{\varphi_K(\kq) }{\N\kq}   \cdot \frac{x}{h(\kq)}
	\]
	where
	\[
	\tilde{c}_{\psi} = 
	\begin{cases}
8.62 & \text{if $\psi$ is quadratic}, \\
4.02 & \text{if $\psi$ is principal}. 
\end{cases}
	\]
	 Upper bounds for even wider ranges of $x$ could potentially also be established by allowing for a constant larger than $\tilde{c}_{\psi}$. 
	\item The constant $c_{\psi}$ is likely subject to improvement which we do not seriously pursue here as that is not our aim. 
	
	\item  One can also establish a variant of \cref{Theorem1} which holds for larger values of $x$.  For instance, one could instead assume
		\[
		(e^{M_{\delta} n_K} \cdot n_K^{n_K} d_K \N\kq)^{\ell}  \leq x \leq (e^{M_{\delta} n_K} \cdot n_K^{n_K} d_K \N\kq)^{100\ell}
		\]
		for any integer $\ell \geq 20$, say. Adapting the argument in \cref{subsec:T1-Proof}, one can deduce the same lower bound with 
		\[
	c_{\psi} = 
	\begin{cases}
0.0275 - O(\frac{e^{\ell}}{\ell!}) & \text{if $\psi$ is quadratic}, \\
0.0749- O(\frac{e^{\ell}}{\ell!})  & \text{if $\psi$ is principal},
\end{cases}
	\]
	 and provided $\eta \geq \eta(\delta, \ell)$. 
		
		
\end{enumerate}
\end{rems*}
The primary objective of this paper is to prove \cref{Theorem1}.  The arguments involved are motivated by  the sieve-based techniques employed for the classical case $K= \Q$, including Heath-Brown's aforementioned foundational paper \cite{HBSiegel} and an elegant modern proof by Friedlander and Iwaniec \cite[Chapter 24]{Opera}. To be more specific, let us sketch the main components, and for concreteness temporarily suppose that $\psi \pmod{\kq}$ is quadratic. First, we establish the Fundamental Lemma (\cref{FundamentalLemma})  for zero-dimensional sieves in number fields and aim to apply it a sequence $\{ a_{\kn}\}_{\kn \subseteq \cO}$ where
\[
a_{\kn} \approx \mu_K^2(\kn) \mathbf{1}\{\kn \in \cC\} \cdot \sum_{\kd \mid \kn} \psi(\kd),
\]
$\mu_K(\, \cdot \,)$ is the M\"{o}bius function defined by \eqref{KMobius}, and $\mathbf{1}\{\, \cdot \,\}$ is an indicator function. Roughly speaking, the sum $\sum_{\kd \mid \kn} \psi(\kd)$ pretends to be an indicator function for integral ideals $\kn$ satisfying $\kp \mid \kn \implies \psi(\kp) = 1$.  After computing local densities, we show that our sieve problem is zero-dimensional because $\psi(\cC) = 1$ and a Siegel zero is assumed to exist. Then we use a Buchstab identity and apply the Fundamental Lemma to lower bound terms with no small prime ideal factors and upper bound terms with large prime ideal factors. An appropriate choice of the relevant sieve parameters and a Tauberian-type argument finishes the proof.

Proving a version of \cref{Theorem1} for the non-residue case $\psi(\cC) = -1$ would certainly be desirable but it is not immediately clear how to do so by sieve-based techniques. In the classical case $K=\Q$, the corresponding sieve problem is one-dimensional leading to an excellent value for Linnik's constant which was first established by Heath-Brown \cite{HBSiegel}. For a general number field $K$ of degree $n_K$, if most small rational primes split then the sieve problem could at worst have dimension $n_K$. Since we seek a bound like  \eqref{WeissBound} with absolute exponents, this high dimension issue therefore poses a difficulty when $\psi(\cC) = -1$. 

Finally, we summarize the organization of this paper. \cref{sec:Sieve_NT} sets up a sieve in number fields and proves the Fundamental Lemma for zero-dimensional sieves. The discussion therein is a close adaptation of \cite[Chapters 5 \& 6]{Opera} but is included for completeness as many variations of number field sieves exist. \cref{sec:Prelim} consists of background material on Hecke $L$-functions, elementary estimates, and notation which will be used throughout the paper. 
\cref{sec:SieveApplication} computes the key components of our sieve problem -- local densities and dimension -- and estimates terms with small prime factors and large prime factors. \cref{sec:TheoremProof} contains the proof of \cref{Theorem1}. 

\subsection*{Acknowledgements} I am very happy to acknowledge the patience, support,  and encouragement from my advisor, Prof. John Friedlander. Always generous with his time and advice, he initially suggested this intriguing problem to me and has provided many helpful comments during our discussions.

\section{Sieve theory in number fields}
\label{sec:Sieve_NT}
\subsection{Notation}
Begin with a sequence $\cA = \{ a_{\kn}\}_{\kn \subseteq \cO}$ of non-negative real numbers such that
\[
|\cA| := \sum_{\kn \subseteq \cO} a_{\kn}
\]
converges\footnote{For instance, one could take $a_{\kn} = e^{-\N\kn/x}$ with $x \geq 1$.}. For an integral ideal $\kd \subseteq \cO$, define
\[
\cA_{\kd} = \{ a_{\kn} : \kd \mid \kn\} \qquad |\cA_{\kd}| := \sum_{\kd \mid \kn} a_{\kn} 
\]
and suppose 
\[
|\cA_{\kd}| = g(\kd) X + r_{\kd}
\]
for some multiplicative function $g(\kd)$ called the \emph{density function} and \emph{remainders} $r_{\kd}$. The \emph{local densities} $g(\kd)$ satisfy
\[
0 \leq g(\kp) < 1
\]
for all prime ideals $\kp$ of $\cO$. Given a set of prime ideals $\cP$ and \emph{sifting level} $z \geq 2$, define
\[
\kP = \kP(z) := \prod_{\substack{ \kp \in \cP \\ \N\kp < z} } \kp, \qquad V(z) := \prod_{\substack{ \kp \in \cP \\ \N\kp < z} } (1-g(\kp)),
\]
and
\[
S(\cA, \cP, z) = S(\cA, z) :=  \sum_{(\kn, \kP(z)) = 1} a_{\kn}
\]
where we suppress the dependence on $\cP$ or $z$ when it is understood. Recall the M\"{o}bius function $\mu_K(\, \cdot \,)$ on integral ideals is defined by
\begin{equation}
\mu_K(\kn) = \begin{cases}
(-1)^r & \text{if $\kn = \kp_1 \cdots \kp_r$ where $\kp_i$ are distinct prime ideals,} \\
0 & \text{otherwise},
\end{cases}
\label{KMobius}
\end{equation}
or equivalently
\begin{equation}
\label{KMobius-2}
 \sum_{\kd \mid \kn} \mu_K(\kd) = \begin{cases} 1 & \text{if $\kn = (1)$,} \\ 0 & \text{otherwise.} \end{cases}
\end{equation}

Sifting $\cA$ according to $\cP$ amounts to estimating $S(\cA, z)$. It is therefore natural to introduce a function, called the \emph{sieve weight},
\[
\Lambda = (\lambda_{\kd})_{\kd}, \qquad \text{for } \kd \mid \kP(z) \text{ and } \N\kd < D
\]
which acts as a finite approximation to the M\"{o}bius function with \emph{level of distribution} $D$. From \eqref{KMobius-2}, one can easily see that
\[
S(\cA, z) = \sum_{\kd \mid \kP(z)} \mu(\kd) |\cA_{\kd}| 
\]
so our approximation takes the form
\[
S^{\Lambda}(\cA,z) :=  \sum_{\kd} \lambda_{\kd} |\cA_{\kd}| = \sum_{\kn} a_{\kn} \Big( \sum_{\kd \mid \kn} \lambda_{\kd} \Big).  
\]
Of special importance are weights $\Lambda^+ = (\lambda^+_{\kd})$ and $\Lambda^- = (\lambda^-_{\kd})$ satisfying
\begin{equation}
\sum_{\kd \mid \kn} \lambda_{\kd}^- \leq \sum_{\kd \mid \kn} \mu_K(\kd) \leq \sum_{\kd \mid \kn} \lambda^+_{\kd}
\label{UpperAndLowerBoundSieveWeights}
\end{equation}
and therefore implying
\begin{equation}
S^-(\cA, z) \leq S(\cA, z) \leq S^+(\cA,z)
\label{UpperAndLowerBoundSieve}
\end{equation}
where the \emph{lower bound sieve}  $S^-$ and the \emph{upper bound sieve} $S^+$ correspond to $\Lambda^-$ and $\Lambda^+$ respectively.  In keeping with notation, we naturally define the main term sums by
\[
V^+(D,z) = \sum_{\substack{ \kd \mid \kP(z) \\ \N\kd < D}} \lambda_{\kd}^+ g(\kd), \qquad V^-(D,z) = \sum_{\substack{ \kd \mid \kP(z) \\ \N\kd < D}} \lambda_{\kd}^- g(\kd),
\]
and remainder terms by
\[
R^+(D,z) = \sum_{\substack{ \kd \mid \kP(z) \\ \N\kd < D}} \lambda_{\kd}^+ r_{\kd}, \qquad R^-(D,z) = \sum_{\substack{ \kd \mid \kP(z) \\ \N\kd < D}} \lambda_{\kd}^- r_{\kd}.
\]
The conditions under the sums may be dropped in light of the definition of the sieve weights, but we include them for emphasis and clarity.

We will be concerned with sieves satisfying
\begin{equation}
\frac{V(w)}{V(z)} = \prod_{w \leq \N\kp < z} \big(1-g(\kp))^{-1} \leq C \Big( \frac{\log z}{\log w} \Big)^{\kappa} \qquad \text{for $2 \leq w < z$,}
\label{SieveDimension}
\end{equation}
where $C > 1$ is a constant  and $\kappa \geq 0$ is the \emph{sieve dimension}. 

\subsection{Buchstab Iterations}

Fix a norm-based total ordering ``$\prec$" of prime ideals of $\cO$; that is, for prime ideals $\kp$ and $\kp'$,
\[
\kp \prec \kp' \implies \N\kp \leq \N\kp'. 
\] 
Abusing notation, for $y \in \R$, write $y \prec \kp$ (resp. $\kp \prec y$) if $y < \N\kp$ (resp. $\N\kp < y$). Observe
\begin{equation}
\N\kp \preceq \kp \text{ and } \kp \preceq \N\kp, \quad \text{ but } \quad  \N\kp \not\prec \kp \text{ and }  \kp \not\prec \N\kp
\label{IdealNormOrdering}
\end{equation}
with this choice. Further abusing notation, for a prime ideal $\km$, define
\[
\kP(\km) :=  \prod_{\substack{\kp \in \cP \\ \kp \prec \km}}  \kp, \qquad V(\km) := \prod_{\substack{\kp \in \cP \\ \kp \prec \km}} (1-g(\kp)), 
\]
and
\[
S(\cA, \km) := \sum_{(\kn, \kP(\km) )=1} a_{\kn}.
\]
Comparing with notation from the previous subsection and using \eqref{IdealNormOrdering}, notice
\[
\kP(\N\km) \mid  \kP(\km), \qquad V(\km) \leq V(\N\km), \qquad \text{ and } \qquad S(\cA, \km) \leq S(\cA, \N\km). 
\]
Note that the results of this paper are independent of the choice of ordering. 

Now, choose sieve weights $\Lambda^+ = (\lambda^+_{\kd})$ and $\Lambda^- = (\lambda^-_{\kd})$ defined to be the M\"{o}bius function truncated to sets of the type
\begin{equation}
\begin{aligned}
\cD^+ & := \{ \kd = \kp_1\cdots \kp_{\ell} : \kp_m \prec y_m \quad \text{for $m$ odd} \} \\
\cD^- & := \{ \kd = \kp_1\cdots \kp_{\ell} : \kp_m \prec y_m \quad \text{for $m$ even} \} \\
\end{aligned}
\label{TruncationSets}
\end{equation}
where $\kd$ is written as a product of distinct prime ideals enumerated in decreasing order,
\[
\kd = \kp_1\cdots\kp_{\ell} \quad \text{with $z \succ \kp_1 \succ \cdots \succ \kp_{\ell}$}.
\]
By convention, $\cD^+$ and $\cD^-$ both contain $\kd = (1)$. The real numbers $y_m$ are \emph{truncation parameters} and by inclusion-exclusion, \eqref{UpperAndLowerBoundSieveWeights} is satisfied regardless of the choices for $y_m$.

Following the discussion on Buchstab iterations in \cite[Section 6.2]{Opera}, one may similarly deduce
\begin{equation}
S(\cA, z) = S^+(\cA, z) - \sum_{n \text{ odd}} S_n(\cA, z),
\label{Buchstab-UpperBound}
\end{equation}
\begin{equation}
S(\cA, z) = S^-(\cA, z) + \sum_{n \text{ even}} S_n(\cA, z),
\label{Buchstab-LowerBound}
\end{equation}
where
\begin{equation}
S_n(\cA, z) = \mathop{\sum \cdots \sum}_{ \substack{y_n \preceq \kp_n \prec \cdots \prec \kp_1 \\ \kp_m \prec y_m, \, m < n, \, m \equiv n (2) } } S(\cA_{\kp_1\cdots \kp_n}, \kp_n).
\label{Buchstab-NthError}
\end{equation}
Moreover, by the same procedure,
\begin{equation}
V(z) = V^+(D, z) - \sum_{n \text{ odd}} V_n(z),
\label{Buchstab-UpperBound_Main}
\end{equation}
\begin{equation}
V(z) = V^-(D, z) + \sum_{n \text{ even}} V_n(z),
\label{Buchstab-LowerBound_Main}
\end{equation}
where
\begin{equation}
V_n(z) = \mathop{\sum \cdots \sum}_{ \substack{y_n \preceq \kp_n \prec \cdots \prec \kp_1 \prec z \\ \kp_m \prec y_m, \, m < n, \, m \equiv n (2) } } g(\kp_1\cdots \kp_n) V(\kp_n).
\label{Buchstab-NthError}
\end{equation}
From \eqref{Buchstab-UpperBound} and \eqref{Buchstab-LowerBound}, 
\begin{equation*}
\begin{aligned}
S(\cA, z) & \leq S^+(\cA,z) = X V^+(D,z) + R^+(D,z), \\ 
S(\cA, z) & \geq S^-(\cA,z) = X V^-(D,z) + R^-(D,z).
\end{aligned}
\end{equation*}
Thus, to prove the ``Fundamental Lemma" for a certain choice of truncation parameters $y_m$, it suffices to upper bound $V_n(z)$ in light of \eqref{Buchstab-UpperBound_Main} and \eqref{Buchstab-LowerBound_Main}. 

\subsection{Fundamental Lemma for Zero Dimensional Sieves} We assume the sieve dimension is zero, i.e. $\kappa = 0$ in \eqref{SieveDimension}. 
For the sets defined in \eqref{TruncationSets}, choose the truncation parameters 
\[
y_m = \frac{D}{\N(\kp_1\cdots \kp_m)}
\]
which is an instance of the beta-sieve independently due to Rosser and Iwaniec. Thus, $\lambda^{\pm}_{\kd}$ is a combinatorial weight truncated to $\cD^{\pm}$ with level of support $D$. Define the \emph{sifting variable}
\[
\tau := \frac{\log D}{\log z}. 
\]
As previously remarked, it remains to upper bound $V_n(z)$ as defined in \eqref{Buchstab-NthError}. 

Suppose $n \leq \tau-1$. By our choice of truncation parameters, the condition $y_n \preceq \kp_n$ in \eqref{Buchstab-NthError} implies that $D \leq (\N\kp_1)^{n+1} < z^{n+1} \leq z^{\tau} = D$, a contradiction. Thus,
\[
V_n(z) = 0 \qquad \text{for $n \leq \tau-1$}. 
\]
Now, suppose $n > \tau-1$. Since the terms of $V_n(z)$ are non-negative and $V(\kp_n) \leq 1$, we deduce that 
\begin{align*}
V_n(z) 
& \leq \mathop{\sum \cdots \sum}_{ \substack{\kp_n \prec \cdots \prec \kp_1 \prec z } } g(\kp_1\cdots \kp_n)  
\leq \frac{1}{n!} \Big( \sum_{\kp \prec z} g(\kp)  \Big)^n  
\leq \frac{1}{n!} \big| \log V(z) \big|^n  
\end{align*}
Using \eqref{SieveDimension} with $\kappa = 0$, observe
\[
\frac{V_n(z)}{V(z)} \leq \frac{C (\log C)^n}{n!} \qquad \text{for $n > \tau-1$}. 
\]
Summing over all $n$ of the same parity and using the power series for hyperbolic sine and cosine, observe
\begin{align*}
\sum_{n \text{ odd}} V_n(z) \leq V(z) \cdot \sum_{\substack{ n > \tau-1 \\ n \text{ odd}}} \frac{C (\log C)^n}{n!} = V(z) \cdot \Big[ \frac{C^2-1}{2} - C \sum_{\substack{ 1 \leq n < n_1(\tau) \\ n \text{ odd}}} \frac{ (\log C)^{n }}{n!} \Big], \\
\sum_{n \text{ even}} V_n(z) \leq V(z) \cdot \sum_{\substack{ n > \tau-1 \\ n \text{ even}}} \frac{C (\log C)^n}{n!} = V(z) \cdot \Big[ \frac{C^2+1}{2} - C \sum_{\substack{ 0 \leq n < n_0(\tau) \\ n \text{ even}}} \frac{ (\log C)^{n }}{n!} \Big]
\end{align*}
where $n_1(t)$ is the least odd integer $ > t-1$, and $n_0(t)$ is the least even integer $>t-1$. We have therefore established the following theorem. 
\begin{thm}[Fundamental Lemma for Zero Dimensional Sieves]
\label{FundamentalLemma}
Let $D \geq 1$ and $z \geq 2$. Suppose \eqref{SieveDimension} holds with $\kappa = 0$ for all $w$ with $2 \leq w < z$ and some $C > 1$. Then
\begin{equation}
\begin{aligned}
S(\cA,z) & \leq X V(z) \Big\{ 1 + E_1(C; \tau) \Big\} + R^+(D,z) \\ 
S(\cA,z) & \geq X V(z) \Big\{ 1 - E_0(C; \tau) \Big\} + R^-(D,z) \\ 
\end{aligned}
\end{equation}
where $\tau = \tfrac{\log D}{\log z}$, $n_1(t)$ is the least odd integer $ > t-1$, $n_0(t)$ is the least even integer $>t-1$, 
\begin{align*}
E_1(C; \tau) & =  \frac{C^2-1}{2} - C \sum_{\substack{ 1 \leq n < n_1(\tau) \\ n \text{ odd}}} \frac{ (\log C)^{n }}{n!}, \\
E_0(C; \tau) & = \frac{C^2+1}{2} - C \sum_{\substack{ 0 \leq n < n_0(\tau) \\ n \text{ even}}} \frac{ (\log C)^{n }}{n!},
\end{align*}
and $R^{\pm}(D,z)$ are the remainders given by
\[
R^{\pm}(D,z) =  \sum_{\substack{ \kd \mid \kP(z) \\ \N\kd < D}} \lambda_{\kd}^{\pm} r_{\kd} \qquad \text{with } \quad \text{$|\lambda_{\kd}^{\pm}| \leq 1$}.
\]
\end{thm}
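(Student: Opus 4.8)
The plan is to run the Buchstab iteration machinery already set up in this section and reduce everything to an estimate for the quantities $V_n(z)$ from \eqref{Buchstab-NthError}. By \eqref{Buchstab-UpperBound}--\eqref{Buchstab-LowerBound} together with $|\cA_{\kd}| = g(\kd)X + r_{\kd}$, one has $S(\cA,z) \le XV^+(D,z) + R^+(D,z)$ and $S(\cA,z) \ge XV^-(D,z) + R^-(D,z)$, so by \eqref{Buchstab-UpperBound_Main} and \eqref{Buchstab-LowerBound_Main} it suffices to bound $\sum_{n \text{ odd}} V_n(z)$ and $\sum_{n \text{ even}} V_n(z)$ from above by $V(z)$ times $E_1(C;\tau)$ and $E_0(C;\tau)$ respectively. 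The fact that the combinatorial weights $\lambda_{\kd}^{\pm}$ attached to $\cD^{\pm}$ satisfy $|\lambda_{\kd}^{\pm}| \le 1$ is immediate since they are truncations of $\mu_K$.

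\textbf{Step 1: vanishing for small $n$.} With the beta-sieve choice $y_m = D/\N(\kp_1\cdots\kp_m)$, I would observe that in $V_n(z)$ the constraint $y_n \preceq \kp_n$ means $D \le \N(\kp_1\cdots\kp_n)\,\N\kp_n \le (\N\kp_1)^{n+1} < z^{n+1}$, which combined with $n+1 \le \tau$ gives $D < z^{\tau} = D$, a contradiction. Hence $V_n(z) = 0$ whenever $n \le \tau - 1$.

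\textbf{Step 2: tail bound for large $n$.} For $n > \tau - 1$, drop the factor $V(\kp_n) \le 1$ and forget the truncation conditions to get $V_n(z) \le \sum_{\kp_n \prec \cdots \prec \kp_1 \prec z} g(\kp_1\cdots\kp_n) \le \frac{1}{n!}\big(\sum_{\kp \prec z} g(\kp)\big)^n$. Using $\sum_{\kp \prec z} g(\kp) \le -\log\prod_{\kp \prec z}(1-g(\kp)) = |\log V(z)|$ (from $1 - g(\kp) \le e^{-g(\kp)}$, valid since $0 \le g(\kp) < 1$) and then invoking \eqref{SieveDimension} with $\kappa = 0$, i.e. $1/V(z) \le C$, yields $|\log V(z)| \le \log C + \log(1/V(z)) \le \log C$ after absorbing — more precisely one gets $V_n(z)/V(z) \le C(\log C)^n/n!$ for $n > \tau - 1$.

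\textbf{Step 3: assembling the error terms.} Summing $C(\log C)^n/n!$ over odd $n > \tau - 1$ and over even $n > \tau-1$ and comparing with the full series for $C\sinh(\log C) = (C^2-1)/2$ and $C\cosh(\log C) = (C^2+1)/2$, one subtracts off the finitely many missing low-order terms to obtain exactly $E_1(C;\tau)$ and $E_0(C;\tau)$. Feeding these back into \eqref{Buchstab-UpperBound_Main}--\eqref{Buchstab-LowerBound_Main} and then into the $S$-inequalities completes the proof. The only place requiring any care is Step 2 — making the passage from $g(\kp)$-sums to $\log V(z)$ clean and correctly extracting the constant $C$ from \eqref{SieveDimension} when $\kappa=0$ — but this is routine; everything else is bookkeeping with the Buchstab identities established above.
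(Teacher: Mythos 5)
Your proposal is correct and follows essentially the same route as the paper: Buchstab iterations with the beta-sieve truncation $y_m = D/\N(\kp_1\cdots\kp_m)$, vanishing of $V_n(z)$ for $n \le \tau-1$, the bound $V_n(z) \le |\log V(z)|^n/n!$ combined with $1/V(z) \le C$ to get $V_n(z)/V(z) \le C(\log C)^n/n!$, and summation against the $\sinh/\cosh$ series. The only blemish is the garbled intermediate sentence in Step 2 (``$|\log V(z)| \le \log C + \log(1/V(z)) \le \log C$'') --- the clean statement is simply $|\log V(z)| \le \log C$ from $V(z) \ge 1/C$, with the extra factor of $C$ coming from dividing $V_n(z)$ by $V(z)$ --- but your final displayed inequality is the right one and matches the paper.
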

\begin{rem*} Of course, one could replace $E_0(C; \tau)$ and $E_1(C;\tau)$ by simpler expressions using Taylor's theorem but this results in slightly worse constants. 
\end{rem*}
\section{Preliminaries}
\label{sec:Prelim}
\subsection{Elementary Estimates}

Recall Hecke characters are characters $\chi$ of the ray class group $\Cl(\kq)$, writing $\chi \pmod{\kq}$ to indicate this relationship. For notational convenience, we pullback the domain of $\chi$ and extend it to all integral ideals by zero; that is, $\chi(\kn)$ is defined for all integral ideals $\kn \subseteq \cO$ and $\chi(\kn) = 0$ for $(\kn,\kq) \neq 1$. The \emph{conductor} $\fk{f}_{\chi}$ of a Hecke character $\chi \pmod{\kq}$ is the maximal integral ideal such that $\chi$ is the pushforward of a Hecke character modulo $\fk{f}_{\chi}$. Observe $\fk{f}_{\chi}$ divides $\kq$. We say $\chi$ is \emph{primitive modulo $\kq$} if $\kf_{\chi} = \kq$. 

Thus, the Hecke $L$-function associated to $\chi \pmod{\kq}$ may be written as
\[
L(s,\chi) = \sum_{\kn \subseteq \cO} \chi(\kn) (\N{\kn})^{-s} = \prod_{\kp} \Big(1-\frac{\chi(\kp)}{(\N{\kp})^s} \Big)^{-1} \qquad \text{for $\sigma > 1$}
\]
where $s = \sigma + it$. Unless otherwise specified, we may refer to Hecke characters as characters. For completeness, we record a classical convexity bound for Hecke $L$-functions due to Rademacher. 

\begin{lem}[Rademacher] \label{ConvexityBd} Let $\delta \in (0,\tfrac{1}{2})$ be given. Suppose  $\chi$ is a primitive non-principal Hecke character modulo $\kf_{\chi}$. Then for $s = \sigma+it$,
\[
L(s,\chi) \ll_{\delta} \zeta_{\Q}(1+\delta)^{n_K} \Big( \frac{d_K \N\kf_{\chi}}{ (2\pi)^{n_K}} (2+|t|)^{n_K} \Big)^{(1-\sigma+\delta)/2}
\]
and
\[
(s-1) \cdot \zeta_K(s) \ll_{\delta} \zeta_{\Q}(1+\delta)^{n_K} \Big( \frac{d_K}{ (2\pi)^{n_K}} (2+|t|)^{n_K} \Big)^{(1-\sigma+\delta)/2}
\]
uniformly in the region
\[
-\delta \leq \sigma \leq 1+\delta. 
\]  
\end{lem}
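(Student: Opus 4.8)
The plan is to run the Phragm\'en--Lindel\"of / Hadamard three-lines argument in the standard way, anchored by the completed $L$-function. First I would recall the functional equation for a primitive non-principal Hecke character $\chi$ modulo $\kf_\chi$: writing the completed $L$-function
\[
\Lambda(s,\chi) = \Big( \frac{d_K \N\kf_\chi}{(2\pi)^{n_K}} \cdot (\text{archimedean gamma factors}) \Big)^{s/2} \cdot \gamma(s,\chi) \cdot L(s,\chi),
\]
one has $\Lambda(s,\chi) = \varepsilon(\chi)\, \Lambda(1-s,\bar\chi)$ with $|\varepsilon(\chi)|=1$, and $\Lambda(s,\chi)$ is entire of order $1$ since $\chi$ is non-principal and primitive. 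The gamma factors are products of $\Gamma_\R(s+\mu_j)$ and $\Gamma_\C(s+\nu_j)$ over the $r_1$ real and $r_2$ complex places, with shifts $\mu_j \in \{0,1\}$ and $\nu_j$ bounded; for the Dedekind-zeta statement one instead uses $(s-1)\zeta_K(s)$, which is entire of order $1$, with the classical functional equation of $\zeta_K$.

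The key steps, in order: (1) On the line $\sigma = 1+\delta$, bound $L(s,\chi)$ trivially by the Euler product: $|L(1+\delta+it,\chi)| \le \zeta_\Q(1+\delta)^{n_K}$, since each local factor is dominated by $(1-\N\kp^{-1-\delta})^{-1}$ and the ideals of norm $m$ number at most $d(m)^{\,?}$-many — more precisely $\sum_\kn (\N\kn)^{-1-\delta} = \zeta_K(1+\delta) \le \zeta_\Q(1+\delta)^{n_K}$. (2) On the line $\sigma = -\delta$, use the functional equation to write $L(-\delta+it,\chi)$ in terms of $L(1+\delta-it,\bar\chi)$ times the ratio of gamma factors and the conductor factor; estimate the archimedean ratio by Stirling, which produces exactly the factor $\big(\tfrac{d_K \N\kf_\chi}{(2\pi)^{n_K}}(2+|t|)^{n_K}\big)^{(1+2\delta)/2}$ up to the same constant $\zeta_\Q(1+\delta)^{n_K}$. (3) Apply the Phragm\'en--Lindel\"of principle to the analytic function $L(s,\chi)$ (or $(s-1)\zeta_K(s)$) in the strip $-\delta \le \sigma \le 1+\delta$: the order-$1$ growth along horizontal lines (polynomial in $|t|$ from the gamma factors) legitimizes the three-lines interpolation, yielding for $-\delta \le \sigma \le 1+\delta$ a bound that is the geometric interpolation of the two edge bounds, with exponent $(1-\sigma+\delta)/2$ on the analytic-conductor factor and $\zeta_\Q(1+\delta)^{n_K}$ throughout. (4) For the $\zeta_K$ statement, repeat with $(s-1)\zeta_K(s)$ in place of $L(s,\chi)$ and $\kf_\chi = \cO$, noting the pole at $s=1$ is cancelled so the maximum modulus argument applies; track that the residue contributes only an absolute constant absorbed into $\ll_\delta$.

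The main obstacle I expect is purely bookkeeping rather than conceptual: carefully pushing the Stirling estimate for the ratio of $n_K$-many gamma factors through uniformly in $t$ and in the degree $n_K$, so that the final constant depends only on $\delta$ (not on $K$) and the $(2+|t|)^{n_K}$ and $(2\pi)^{-n_K}$ powers come out with exactly the claimed exponents. One must be slightly careful near $t=0$ (the gamma factors have poles at non-positive integers, which is why the shift $s+\mu_j$ and the $2+|t|$ rather than $|t|$ appear) and must verify the Phragm\'en--Lindel\"of hypotheses — finite order in the closed strip — which follows from the functional equation plus the convergence of the Dirichlet series on $\sigma=1+\delta$. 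Since this is a classical result of Rademacher quoted for completeness, I would in fact keep the write-up brief and cite the original, only indicating these steps.
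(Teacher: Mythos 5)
Your outline is the standard Phragm\'en--Lindel\"of argument underlying Rademacher's theorem, and the paper itself simply cites \cite[Theorem 5]{Rademacher} for this lemma --- exactly the course you say you would take. So the proposal is correct and matches the paper's (purely citational) treatment.
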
 
\begin{proof} See \cite[Theorem 5]{Rademacher}.
\end{proof}
When applying the above convexity result, we will require bounds for the Gamma function $\Gamma(s) = \int_0^{\infty} e^{-t} t^{s-1}dt$ in a vertical strip; for instance, from  \cite[Appendix C]{MV},
\begin{equation}
\Gamma(s) \ll_{\delta} e^{-|t|} 
\label{Gamma_VerticalStrip}
\end{equation}
uniformly in the region $-2 \leq \Re\{s\} \leq 2$ with $|s| \geq \delta$. We end this subsection with elementary results involving standard sums over prime ideals and the size of the ray class group $\Cl(\kq)$. 

\begin{lem} \label{PR-NaivePrimeSums}
Let $a \in (0,1), \delta > 0$ be arbitrary and $\kd$ be an integral ideal of $K$. Then
\begin{enumerate}[(i)]
	\item  $\ds\sum_{\kp} \frac{1}{(\N\kp)^{1+\delta}} \ll_{\delta} n_K$
	\item  $\ds\sum_{\kp \mid \kd} \frac{1}{(\N\kp)^{a}} \ll n_K^{a/2} (\log \N\kd)^{1-a/2}$
	\item  $\ds\sum_{\kp \mid \kd} \frac{1}{(\N\kp)^{a}} \ll \delta^{-2/a+1} n_K + \delta \log \N\kd$
\end{enumerate}
\end{lem}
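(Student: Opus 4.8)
The plan is to treat all three estimates by the same elementary device: sort the prime ideals of $K$ by the rational prime $p$ lying below them, and use the two facts that at most $n_K$ prime ideals of $K$ lie above a given $p$, and that each such $\kp$ satisfies $\N\kp\ge p$. For (i) this already suffices: grouping by the underlying rational prime,
\[
\sum_{\kp}\frac{1}{(\N\kp)^{1+\delta}}\le\sum_p\frac{n_K}{p^{1+\delta}}\le n_K\,\zeta_{\Q}(1+\delta),
\]
and $\zeta_{\Q}(1+\delta)=O_{\delta}(1)$.

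I would prove (iii) next, since (ii) drops out of it. If $\delta\ge\tfrac12$, the trivial bound $\sum_{\kp\mid\kd}1\le(\log\N\kd)/\log 2$ already gives $\sum_{\kp\mid\kd}(\N\kp)^{-a}\ll\delta\log\N\kd$. If $0<\delta<\tfrac12$, set the cut $z=\delta^{-1/a}$ — which exceeds $2$ because $1/a\ge 1$ — and split the sum over $\kp\mid\kd$ at $\N\kp=z$. Every $\kp\mid\kd$ with $\N\kp\le z$ lies above a rational prime $\le z$, so there are at most $n_K\,\pi_{\Q}(z)\le n_K z=n_K\delta^{-1/a}$ of them; since $1/a\le 2/a-1$ for $a\le 1$, this part is $\le n_K\delta^{-2/a+1}$. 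For $\kp\mid\kd$ with $\N\kp>z$: the distinct such prime ideals have norms exceeding $z$, and the product of those norms divides $\N\kd$, so there are fewer than $(\log\N\kd)/\log z$ of them, each contributing less than $z^{-a}=\delta$; this part is therefore $<\delta(\log\N\kd)/\log z\le\delta(\log\N\kd)/\log 2$. Adding the two pieces yields (iii).

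For (ii), I would simply feed (iii) the value $\delta=(n_K/\log\N\kd)^{a/2}$ — legitimate since (iii) holds for every $\delta>0$; the case $\N\kd=1$ is vacuous, and $\N\kd\ge 2$ makes $\log\N\kd>0$. With this choice both $\delta^{-2/a+1}n_K$ and $\delta\log\N\kd$ equal $n_K^{a/2}(\log\N\kd)^{1-a/2}$, which is exactly (ii).

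There is no deep obstacle here; the argument is bookkeeping. The one point that deserves a word is the choice of the cut $z$ in (iii). It is tempting to optimize $z$ sharply, which would even improve the exponent to $1/a-1$; however, estimating the small-norm primes by partial summation introduces $\sum_{n\le z}n^{-a}\asymp z^{1-a}/(1-a)$, whose factor $1/(1-a)$ blows up as $a\to 1$ and would leave the implied constant dependent on $a$. The deliberately lossy exponent $2/a-1$ in the statement is precisely what lets one replace that partial summation by the crude $\pi_{\Q}(z)\le z$ and keep every implied constant absolute; the split $\delta\ge\tfrac12$ versus $\delta<\tfrac12$ serves only to guarantee $z\ge 2$.
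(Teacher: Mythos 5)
Your proof is correct, and for parts (ii) and (iii) it runs in the opposite direction from the paper's. The paper proves (ii) first, by H\"older's inequality
\[
\sum_{\kp \mid \kd} \frac{1}{(\N\kp)^{a}} \leq \Big( \sum_{\kp \mid \kd} 1 \Big)^{1-a} \Big(\sum_{\kp \mid \kd} \frac{1}{\N\kp}\Big)^{a},
\]
bounding the first factor by $\log \N\kd$ and the second by an estimate of the shape $\ll n_K^{1/2}(\log\N\kd)^{1/2}$ imported from an earlier paper of the author; it then deduces (iii) from (ii) by splitting into the cases $n_K \leq \delta^{2/a}\log\N\kd$ and $n_K > \delta^{2/a}\log\N\kd$. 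You instead prove (iii) directly and self-containedly --- splitting at $\N\kp = \delta^{-1/a}$, counting small-norm primes via the at-most-$n_K$-primes-above-$p$ bound and large-norm primes via divisibility of $\N\kd$ --- and then recover (ii) by the optimal choice $\delta = (n_K/\log\N\kd)^{a/2}$. The two derivations between (ii) and (iii) are essentially dual optimizations, so nothing is lost either way; what your route buys is independence from the external lemma (and your remark about keeping the implied constants absolute as $a \to 1$, which is what the deliberately weak exponent $2/a-1$ permits, is exactly the right point to flag). Part (i) is identical in both treatments.
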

\begin{proof} For (i), observe
\[
\sum_{\kp} \frac{1}{(\N\kp)^{1+\delta}} \leq n_K \sum_{p} \frac{1}{p^{1+\delta}} \ll_{\delta} n_K
\]
where the latter sum is over rational primes $p$.  For (ii), using H\"{o}lder's inequality, we see
\[
\sum_{\kp \mid \kd} \frac{1}{(\N\kp)^{a}} \leq \Big( \sum_{\kp \mid \kd} 1 \Big)^{1-a} \Big(\sum_{\kp \mid \kd} \frac{1}{\N\kp}\Big)^{a}
\]
Bounding the first sum by $\log \N\kd$ and the second sum by the estimate
\[
\sum_{\kp \mid \kd} \frac{\log \N\kp}{\N\kp} \ll n_K^{1/2} (\log \N\kd)^{1/2}
\]
from  \cite[Lemma 2.7]{Zaman2015}, we obtain the desired result. Statement (iii) follows easily from (ii) by considering whether $n_K \leq \delta^{2/a} \log \N\kd$ or not. 
\end{proof}

\begin{lem} \label{PR-RayClassGroup}
Let $\kq$ be an integral ideal. Then $h(\kq) \leq e^{n_K} d_K^{1/2} \N\kq$. 
\end{lem}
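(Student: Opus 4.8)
The plan is to bound the narrow ray class number $h(\kq)$ in terms of the ordinary class number $h_K$ by means of the standard exact sequence of class field theory, and then to estimate $h_K$ by the geometry of numbers. Write $r_1$ and $r_2$ for the number of real and complex places of $K$, so that $r_1 + 2r_2 = n_K$, and let $\Cl(K)$ be the ideal class group, of order $h_K$. There is an exact sequence
\[
\cO^{\times} \longrightarrow (\cO/\kq)^{\times} \times \{\pm 1\}^{r_1} \longrightarrow \Cl(\kq) \longrightarrow \Cl(K) \longrightarrow 1,
\]
in which the last arrow forgets the modulus (and positivity conditions) and the middle arrow records the class modulo $\kq$ together with the signs at the real places. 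Comparing orders and discarding the image of $\cO^{\times}$ on the left, we obtain
\[
h(\kq) \;\le\; h_K \cdot 2^{r_1} \cdot \#(\cO/\kq)^{\times} \;=\; h_K \cdot 2^{r_1}\, \varphi_K(\kq) \;\le\; h_K \cdot 2^{n_K} \cdot \N\kq,
\]
using $r_1 \le n_K$ and $\varphi_K(\kq) \le \N\kq$. So it remains to prove $h_K \le (e/2)^{n_K}\, d_K^{1/2}$.

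For this I would apply Minkowski's convex-body theorem, which guarantees that every ideal class contains a nonzero integral ideal $\ka$ with
\[
\N\ka \;\le\; M_K \;:=\; \Big(\frac{4}{\pi}\Big)^{r_2}\frac{n_K!}{n_K^{n_K}}\, d_K^{1/2},
\]
so that $h_K$ is at most the number of integral ideals of norm $\le M_K$. Since $(4/\pi)^{r_2} \le 1$ and, by Stirling's formula, $n_K!/n_K^{n_K} \ll n_K^{1/2} e^{-n_K}$, the Minkowski bound satisfies $M_K \ll n_K^{1/2} e^{-n_K}\, d_K^{1/2} \le d_K^{1/2}$; it is this exponentially small prefactor — multiplied against the $2^{n_K}$ from the paragraph above, the product being $e^{n_K}$ — that is responsible for the exponential constant in the statement.

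The point requiring care is to count the integral ideals of norm $\le M_K$ without losing a power of $\log d_K$: the crude bound $\#\{\ka \subseteq \cO : \N\ka \le M_K\} \le M_K^{s}\, \zeta_K(s) \le M_K^{s}\, \zeta_{\Q}(s)^{n_K}$ for $s > 1$ introduces a factor of size $(\log d_K)^{n_K}$ once $s$ is optimised near $1$, which is too lossy, so one must instead exploit more precisely that $M_K$ is exponentially small relative to $d_K^{1/2}$, or appeal directly to an effective class-number estimate of the shape $h_K \ll c^{\,n_K}\, d_K^{1/2}$. Feeding such a bound into the reduction of the first paragraph and absorbing the remaining absolute constants into $e^{n_K}$ yields $h(\kq) \le e^{n_K}\, d_K^{1/2}\, \N\kq$, as required. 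The main obstacle, then, is not the class-field-theoretic reduction (which is routine) but obtaining the class-number bound in log-free form.
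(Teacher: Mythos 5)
Your reduction is exactly the paper's: the standard exact sequence gives $h(\kq) \leq 2^{r_1}\varphi_K(\kq)\, h_K \leq 2^{n_K} \N\kq\, h_K$, and everything then rests on a log-free class-number bound. The "obstacle" you flag at the end is resolved in the paper simply by quoting the classical estimate $h_K \leq (4/\pi)^{n_K} d_K^{1/2}$ (a standard weakening of Minkowski's bound), which is precisely the estimate of the shape $h_K \leq c^{n_K} d_K^{1/2}$ you say one must appeal to; since $2 \cdot \tfrac{4}{\pi} = \tfrac{8}{\pi} < e$, this gives the stated inequality with no implied constant to absorb. So your argument is the same as the paper's, and the only thing missing is the citation for that class-number bound rather than any new idea; your worry about losing $(\log d_K)^{n_K}$ in a naive ideal count is legitimate but moot once the bound is taken as known.
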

\begin{proof} From \cite[p.115]{milneCFT}, one can verify $h(\kq) \leq 2^{n_K} h_K \N\kq$. Then the result follows easily from the fact $h_K \leq d_K^{1/2} (\tfrac{4}{\pi} )^{n_K}$, which is a weaker version of Minkowski's classical bound for the class number. 
\end{proof}

\subsection{Exceptional Character} \label{subsec:ExceptionalCharacter} In this section, we setup notation related to the central object of our study -- the exceptional character $\psi$ -- and subsequently prove various estimates by standard methods. 

Let $\psi \pmod{\kq}$ be a real character with real zero 
\begin{equation}
\beta = 1 - \frac{1}{\eta \log(n_K^{n_K} d_K \N\kq)} \qquad \text{with }\eta \geq 20. 
\label{def:eta}
\end{equation}
For integral ideals $\kn \subseteq \cO$, define
\begin{equation}
\lambda(\kn) := 
\begin{cases} 
\ds  \sum_{\km \mid \kn} \psi(\km) & \text{if $\psi$ is quadratic}, \\
 \chi_0(\kn) & \text{if $\psi$ is principal},
\end{cases}
\label{def:lambda}
\end{equation}
and
\begin{equation}
\rho(\kn) :=  \mu_K^2(\kn) \lambda(\kn)
\label{def:rho}
\end{equation}
 where $\mu_K(\, \cdot \,)$ is defined by \eqref{KMobius} and $\chi_0 \pmod{\kq}$ is the principal Hecke character. First, we collect some simple observations about these functions which we state without proof. 
\begin{lem} \label{LambdaAndRho}
Define $\lambda(\kn)$ and $\rho(\kn)$ as in \eqref{def:lambda} and \eqref{def:rho} respectively. Then:
\begin{enumerate}[(i)]
	\item $\rho(\kn)$ and $\lambda(\kn)$ are multiplicative functions of $\kn$. 
	\item  $\rho(\kp) = \lambda(\kp) = 1$ or $2$ if $\psi(\kp) = 1$ and $\psi$ is principal or quadratic respectively. 
	\item $\rho(\kn) = 0$ if there exists a prime ideal $\kp \mid \kn$ such that $\psi(\kp) = -1$.
	\item $0 \leq \rho(\kn) \leq \lambda(\kn)$
\end{enumerate}
\end{lem}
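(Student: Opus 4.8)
The plan is to reduce all four assertions to two structural facts. First, in the quadratic case $\lambda = \mathbf{1}_{\cO} * \psi$ is the Dirichlet convolution over integral ideals of the constant function $\mathbf{1}_{\cO}(\kn) \equiv 1$ with the character $\psi$, so that on a prime power $\kp^{a}$ one has $\lambda(\kp^{a}) = 1 + \psi(\kp) + \cdots + \psi(\kp)^{a}$. Second, since $\psi$ is real, $\psi(\kp) \in \{-1, 0, 1\}$ for every prime ideal $\kp$, with the value $-1$ impossible when $\psi$ is principal (where $\psi = \chi_0$ takes only the values $0$ and $1$). Everything else is bookkeeping with multiplicativity and the vanishing of $\mu_K^2$ on non-squarefree ideals.

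For (i): when $\psi$ is principal, $\lambda = \chi_0$ is a Hecke character, hence multiplicative. When $\psi$ is quadratic, $\lambda = \mathbf{1}_{\cO} * \psi$ is a convolution of two multiplicative arithmetic functions on integral ideals and so is multiplicative; equivalently, factoring $\kn = \prod_i \kp_i^{a_i}$ gives $\lambda(\kn) = \prod_i \big(\sum_{j=0}^{a_i} \psi(\kp_i)^{j}\big)$, which exhibits multiplicativity directly. Since $\mu_K^2$ is plainly multiplicative, $\rho = \mu_K^2 \cdot \lambda$ is a pointwise product of multiplicative functions, hence multiplicative.

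For (ii) and (iii): evaluate on prime ideals. If $\psi(\kp) = 1$ then $\mu_K^2(\kp) = 1$ and $\lambda(\kp)$ equals $\chi_0(\kp) = 1$ in the principal case and $\psi((1)) + \psi(\kp) = 2$ in the quadratic case, giving (ii). For (iii), if $\kn$ is not squarefree then $\mu_K^2(\kn) = 0$ and $\rho(\kn) = 0$ trivially; otherwise write $\kn = \kp\kn'$ with $(\kp,\kn') = (1)$, use (i) to get $\rho(\kn) = \rho(\kp)\rho(\kn')$, and note that $\psi(\kp) = -1$ forces $\psi$ to be quadratic, whence $\lambda(\kp) = 1 + \psi(\kp) = 0$ and $\rho(\kp) = 0$; in the principal case (iii) is vacuous.

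For (iv): split on whether $\kn$ is squarefree. If $\kn$ is squarefree then $\mu_K^2(\kn) = 1$, so $\rho(\kn) = \lambda(\kn)$ and it remains only to check $\lambda(\kn) \ge 0$; if $\kn$ is not squarefree then $\rho(\kn) = 0$ and again it suffices to check $\lambda(\kn) \ge 0$. Non-negativity of $\lambda$ follows from the local factorization: in the principal case $\lambda(\kn) = \chi_0(\kn) \in \{0,1\}$, and in the quadratic case $\lambda(\kn) = \prod_{\kp^{a} \,\|\, \kn}\big(1 + \psi(\kp) + \cdots + \psi(\kp)^{a}\big)$, where each local factor equals $a+1$, $1$, or $1 + (-1) + \cdots + (-1)^{a} \in \{0,1\}$ according as $\psi(\kp) = 1$, $0$, or $-1$, and is in all cases $\ge 0$. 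There is no genuine obstacle here; the only points needing a little care are carrying the principal and quadratic cases of $\psi$ along in parallel — in particular noticing that (iii) is content-free for principal $\psi$ — and remembering that $\rho$ vanishes on non-squarefree ideals, which is precisely why (iv) gives an inequality rather than the equality $\rho = \lambda$.
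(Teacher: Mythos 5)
Your proof is correct. The paper states this lemma without proof (``we state without proof''), and your argument is exactly the routine verification the author intends: writing $\lambda$ in the quadratic case as the ideal-convolution of the constant function with $\psi$, so $\lambda(\kp^a)=1+\psi(\kp)+\cdots+\psi(\kp)^a$, and then reading off multiplicativity, the prime values, the vanishing when $\psi(\kp)=-1$, and non-negativity from the fact that a real character takes values in $\{-1,0,1\}$ together with the vanishing of $\mu_K^2$ on non-squarefree ideals.
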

Next, define 
\begin{equation}
F_{\psi}(s) := \sum_{\kn \subseteq \cO} \frac{\lambda(\kn)}{(\N\kn)^s} \qquad \text{for $\Re\{s\} > 1$.}
\label{def:F_psi}
\end{equation}
We highlight some basic properties of $F_{\psi}(s)$ in the following lemma.
\begin{lem} \label{F_psi-Lemma}Define $F_{\psi}(s)$ as in \eqref{def:F_psi}. Then: 
\begin{enumerate}[(i)]
	\item $F_{\psi}(s)$ extends meromorphically to all of $\C$ with only a simple pole at $s=1$. 
	\item $F_{\psi}(\beta) = 0$ where $\beta$ is the real zero associated to $\psi \pmod{\kq}$. 
	\item For $\delta \in (0,\tfrac{1}{2})$ and $s = \sigma+it$, 
	\[
	F_{\psi}(s) \ll_{\delta} 
	\begin{cases}
	\big\{ d_K^2 (\N\kq) (2+|t|)^{2n_K} \big\}^{(1-\sigma+\delta)/2}  e^{O_{\delta}(n_K)} & \text{if $\psi$ is quadratic},\\ 
	 (\N\kq)^{\delta}  \big\{ d_K (2+|t|)^{n_K} \big\}^{(1-\sigma+\delta)/2} e^{O_{\delta}(n_K)}& \text{if $\psi$ is principal},\\ 
	\end{cases}
	\]
	uniformly in region $\delta \leq \sigma \leq 1+\delta$ with $|s-1| \geq \delta$.	
\end{enumerate}
\end{lem}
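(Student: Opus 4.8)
The plan is to express $F_\psi(s)$ in terms of known $L$-functions and then transfer the analytic properties of those $L$-functions through the resulting identity. For the quadratic case, since $\lambda(\kn) = \sum_{\km \mid \kn}\psi(\km)$ is the Dirichlet convolution $\mathbf{1}*\psi$, the Euler product factors formally as $F_\psi(s) = \zeta_K(s)L(s,\psi)$ for $\Re\{s\}>1$; for the principal case $\lambda = \chi_0$ is just the principal character, so $F_\psi(s) = L(s,\chi_0) = \zeta_K(s)\prod_{\kp\mid\kq}(1-\N\kp^{-s})$. I would first verify these identities by comparing Euler factors prime-by-prime. Given the identities, part~(i) is immediate: $\zeta_K$ has a unique simple pole at $s=1$ and $L(s,\psi)$ (or the finite product $\prod_{\kp\mid\kq}(1-\N\kp^{-s})$) is entire, so $F_\psi$ continues meromorphically to $\C$ with only a simple pole at $s=1$. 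Part~(ii) is equally immediate in the quadratic case: $F_\psi(\beta) = \zeta_K(\beta)L(\beta,\psi) = 0$ because $L(\beta,\psi)=0$ by hypothesis and $\zeta_K(\beta)$ is finite (as $\beta<1$). A small remark is needed that in the principal case $\psi = \chi_0$ has no nontrivial real zero, but the statement of part~(ii) should be read as vacuous or inapplicable there; in any event $\zeta_K(\beta)\prod_{\kp\mid\kq}(1-\N\kp^{-\beta})$ cannot vanish, so strictly part~(ii) concerns the quadratic case.

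For part~(iii), the strategy is to feed the convexity estimates of \cref{ConvexityBd} into the product identities, after first reducing $\psi$ to its primitive inducing character. Write $\psi = \psi^* \cdot \chi_0$ where $\psi^*$ is primitive modulo $\kf_\psi \mid \kq$, so that $L(s,\psi) = L(s,\psi^*)\prod_{\kp\mid\kq}(1-\psi^*(\kp)\N\kp^{-s})$. In the range $\delta \le \sigma \le 1+\delta$ with $|s-1|\ge\delta$, \cref{ConvexityBd} bounds $(s-1)\zeta_K(s)$ by $\zeta_\Q(1+\delta)^{n_K}\big(\tfrac{d_K}{(2\pi)^{n_K}}(2+|t|)^{n_K}\big)^{(1-\sigma+\delta)/2}$ and, since $|s-1|\ge\delta$, the factor $(s-1)^{-1}$ costs only $O_\delta(1)$; it bounds $L(s,\psi^*)$ by $\zeta_\Q(1+\delta)^{n_K}\big(\tfrac{d_K\N\kf_\psi}{(2\pi)^{n_K}}(2+|t|)^{n_K}\big)^{(1-\sigma+\delta)/2}$. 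Multiplying, and noting $\zeta_\Q(1+\delta)^{n_K} = e^{O_\delta(n_K)}$ and $(2\pi)^{-n_K(1-\sigma+\delta)/2} = e^{O_\delta(n_K)}$, the main size comes from $\big(d_K^2\,\N\kf_\psi\big)^{(1-\sigma+\delta)/2} \le \big(d_K^2\,\N\kq\big)^{(1-\sigma+\delta)/2}$ in the quadratic case and from $\big(d_K\big)^{(1-\sigma+\delta)/2}$ together with a separate $(\N\kq)^\delta$-type factor in the principal case. What remains is to control the correction Euler products over $\kp\mid\kq$: each factor $(1-\psi^*(\kp)\N\kp^{-s})$ satisfies $|1-\psi^*(\kp)\N\kp^{-s}| \le 1 + \N\kp^{-\sigma} \le 2$ for $\sigma\ge 0$, so the whole product is $O\big(2^{\omega_K(\kq)}\big)$ where $\omega_K(\kq)$ is the number of prime ideal divisors of $\kq$; since $\omega_K(\kq) \ll \log\N\kq / \log 2$ one gets a factor $(\N\kq)^{O(1)}$, which I would absorb into the stated $(\N\kq)^{O_\delta(\cdot)}$ or $(2+|t|)$-free part. (In the principal case one must be a touch more careful to land the clean exponent $(\N\kq)^\delta$: here the correction product is exactly $\prod_{\kp\mid\kq}(1-\N\kp^{-s})$, bounded by $2^{\omega_K(\kq)}$, and one checks $2^{\omega_K(\kq)} \ll_\delta (\N\kq)^\delta$.)

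The main obstacle is the bookkeeping in part~(iii): keeping the $t$-dependence exactly of the form $(2+|t|)^{n_K \cdot(1-\sigma+\delta)/2}$ while all $n_K$-exponential losses are genuinely $t$-independent and absorbed into $e^{O_\delta(n_K)}$, and pinning down the precise power of $\N\kq$ — in particular getting the asymmetry right between the quadratic case (where $\N\kq$ enters the convexity-type factor to the power $(1-\sigma+\delta)/2$) and the principal case (where $\N\kq$ should appear only as a flat $(\N\kq)^\delta$). The reduction to the primitive character and the crude bound $\prod_{\kp\mid\kq}|1-\psi^*(\kp)\N\kp^{-s}| \le 2^{\omega_K(\kq)} \ll_\delta (\N\kq)^\delta$ is the device that makes the exponents come out as claimed. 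Finally, the $|s-1|\ge\delta$ hypothesis is exactly what lets the simple pole of $\zeta_K$ be divided out with only an $O_\delta(1)$ penalty, so no extra care is needed near $s=1$.
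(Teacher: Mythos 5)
Your proposal is correct and takes essentially the same route as the paper: factor $F_\psi(s)$ into Hecke $L$-functions, read off (i) and (ii) from the factorization, and for (iii) apply Rademacher's convexity bound (\cref{ConvexityBd}) to the primitive parts while controlling the finite Euler product over $\kp \mid \kq$ by an elementary estimate (the paper invokes \cref{PR-NaivePrimeSums}(iii) where you use $2^{\omega_K(\kq)}$). The only caveat is that $2^{\omega_K(\kq)} \ll_\delta (\N\kq)^\delta$ is not quite true as stated --- small prime ideals can be numerous when $n_K$ is large, so the correct bound is $e^{O_\delta(n_K)}(\N\kq)^\delta$ --- but the $e^{O_\delta(n_K)}$ factor is present in the target estimate, so this is harmless.
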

\begin{proof} By \eqref{def:lambda},
\[
F_{\psi}(s) =  
\begin{cases} 
\ds L(s,\chi_0) L(s,\psi) & \text{if $\psi$ is quadratic}, \\ 
\ds L(s,\chi_0) & \text{if $\psi$ is principal.}
\end{cases}
\]
From this factorization,  (i) follows from well-known properties of Hecke $L$-functions and (ii) is implied by $L(\beta,\psi) = 0$. For (iii), use \cref{PR-NaivePrimeSums} with $a=\delta$ for the ``imprimitive" part of $F_{\psi}(s)$, i.e. Euler factors corresponding to $\kp \mid \kq$. Then apply \cref{ConvexityBd} to the ``primitive" part and note $\zeta_{\Q}(1+\delta) \ll_{\delta} 1$. 
\end{proof}
In light of \cref{F_psi-Lemma}, we define some naturally-occurring quantities. First, 
\begin{equation}
\label{def:kappa_psi}
\kappa_{\psi} = \Res_{s=1} F_{\psi}(s) = \begin{cases} 
\ds \frac{\varphi_K(\kq)}{\N\kq} \kappa_K L(1,\psi) & \text{if $\psi$ is quadratic}, \\ 
\ds \frac{\varphi_K(\kq)}{\N\kq} \kappa_K & \text{if $\psi$ is principal,}
\end{cases}
\end{equation}
where $\kappa_K$ is the residue of the Dedekind zeta function $\zeta_K(s)$ at $s=1$ and 
\[
\varphi_K(\kq) = \#(\cO/\kq)^{\times} = \N\kq \prod_{\kp \mid \kq} \Big( 1-\frac{1}{\N\kp} \Big)
\]
is the generalized Euler $\varphi$-function. Further, denote
\begin{equation}
d_{\psi} =
\begin{cases} 
\ds d_K^2 \N\kq & \text{if $\psi$ is quadratic}, \\ 
\ds d_K & \text{if $\psi$ is principal,}
\end{cases}
\label{def:d_psi}
\end{equation}
anda
\begin{equation}
W_{\psi} =
\begin{cases} 
\ds n_K^{2n_K} d_K^2 \N\kq  & \text{if $\psi$ is quadratic}, \\ 
\ds n_K^{n_K} d_K & \text{if $\psi$ is principal.}
\end{cases}
\label{def:W_psi}
\end{equation}

For the remainder of this section, we collect various well-known lower and upper bounds for $\kappa_{\psi}$ and establish other relevant estimates involving $\lambda(\kn)$. The arguments are  straightfoward with standard applications of Mellin inversion.
 
\begin{thm}[Stark] \label{kappa_LB_effective}
\[
\frac{1}{\kappa_{\psi}} \ll  \begin{cases}
n_K^{2n_K} d_K^{1/n_K} (\N\kq)^{1/2n_K} & \text{if $\psi$ is quadratic}, \\
n_K^{n_K} d_K^{1/n_K} & \text{if $\psi$ is principal.}
 \end{cases}
\]
where all implicit constants effective. 
\end{thm}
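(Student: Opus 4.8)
The plan is to reduce the bound for $1/\kappa_\psi$ to Stark's effective lower bound for the exceptional zero, via a Mellin-inversion argument that links $\kappa_\psi$ to $1-\beta$. A more direct route through the factorization $\kappa_\psi = \tfrac{\varphi_K(\kq)}{\N\kq}\kappa_K L(1,\psi)$ seems to lose too much from the Euler factors at $\kq$ in the principal case (where the claimed bound carries no $\N\kq$), so I work with $F_\psi$ directly.

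\textbf{Step 1: $\kappa_\psi \gg 1-\beta$.} Fix $\delta = \tfrac14$ and put $T(x) := \sum_{\kn}\lambda(\kn)(\N\kn)^{-\beta}e^{-\N\kn/x}$. Since $\lambda(\kn)\ge 0$ and $\lambda(\cO)=1$ by \cref{LambdaAndRho}, $T(x)\ge e^{-1/x}$, which is $\gg 1$ once $x$ is bounded below. On the other hand, by Mellin inversion,
\[
T(x) = \frac{1}{2\pi i}\int_{(1+\delta-\beta)} F_\psi(s+\beta)\,\Gamma(s)\,x^s\,ds .
\]
I would shift the contour to $\Re s = \delta-\beta$. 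Exactly two poles are crossed: the simple pole of $F_\psi(s+\beta)$ at $s=1-\beta$, contributing $\kappa_\psi\,\Gamma(1-\beta)\,x^{1-\beta}$; and the pole of $\Gamma$ at $s=0$, whose residue is $F_\psi(\beta)=0$ by \cref{F_psi-Lemma}(ii). This vanishing is the only place the real zero enters, and it is exactly what allows the contour to be pushed past $s=0$. The shifted integral is bounded using the convexity estimate \cref{F_psi-Lemma}(iii) on the line $\Re(s+\beta)=\delta$ (where the relevant exponent collapses to $\tfrac12$, producing a factor $d_\psi^{1/2}$, resp.\ $(\N\kq)^{1/4}d_K^{1/2}$ in the principal case) together with the vertical decay $\Gamma(s)\ll e^{-|t|}$ from \eqref{Gamma_VerticalStrip}; since $\int e^{-|t|}(2+|t|)^{O(n_K)}\,dt \ll n_K^{O(n_K)}$, the integral is $\ll x^{\delta-\beta}\,d_\psi^{1/2}\,e^{O(n_K\log n_K)}$. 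Choosing $x$ to be a fixed power of $d_\psi^{1/2}e^{O(n_K\log n_K)}$ makes this error at most $\tfrac12 T(x)$, hence $\kappa_\psi\,\Gamma(1-\beta)\,x^{1-\beta}\gg 1$. Finally, $\Gamma(1-\beta)\le(1-\beta)^{-1}$, and since $1-\beta = (\eta\log W_\psi)^{-1}$ with $\eta\ge 20$ while $\log x \ll \log W_\psi + O(n_K)$, one has $x^{1-\beta} = e^{O(1)}$; therefore $\kappa_\psi \gg 1-\beta$ with an absolute, effective constant.

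\textbf{Step 2: Stark's lower bound for $1-\beta$.} If $\psi$ is principal, then $F_\psi(s) = L(s,\chi_0) = \zeta_K(s)\prod_{\kp\mid\kq}(1-(\N\kp)^{-s})$, and since the finite product is nonzero on $(0,1)$, $\beta$ is a real zero of $\zeta_K$; Stark's effective lower bound for the exceptional zero of a Dedekind zeta function gives $1-\beta \gg (n_K^{n_K}d_K^{1/n_K})^{-1}$. If $\psi$ is quadratic, let $K'/K$ be the quadratic extension cut out by $\psi$, so that $\zeta_{K'}(s) = \zeta_K(s)L(s,\psi)$ has $\beta$ as a real zero; here $[K':\Q]=2n_K$ and, by the conductor--discriminant formula, $d_{K'} = d_K^2\,\N\kf_\psi \le d_K^2\,\N\kq$, so Stark's bound applied to $\zeta_{K'}$ yields (after bookkeeping the degree-dependent constant) $1-\beta \gg \big(n_K^{2n_K}(d_K^2\N\kq)^{1/(2n_K)}\big)^{-1} = \big(n_K^{2n_K}d_K^{1/n_K}(\N\kq)^{1/(2n_K)}\big)^{-1}$.

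Combining the two steps gives $1/\kappa_\psi \ll (1-\beta)^{-1}$ times the displayed quantity in each case, with all constants effective. I expect Step 1 to be the crux: one must arrange the $(\N\kn)^{-\beta}$ twist precisely so that the residue at $s=0$ is annihilated by the zero — the payoff being the amplifying factor $\Gamma(1-\beta)\asymp(1-\beta)^{-1}$ — and then check that the error from the convexity bound, which genuinely depends on $d_K$ and $\N\kq$, can still be absorbed; this succeeds only because $1-\beta$ is so small that $x^{1-\beta}$ remains bounded no matter how large $x$ has to be taken.
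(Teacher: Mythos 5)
Your argument is correct in outline, but it takes a genuinely different route from the paper's. The paper handles \cref{kappa_LB_effective} by pure residue comparison: for $\psi$ principal it compares $\kappa_{\psi}$ with $\kappa_K$, for $\psi$ quadratic with $\kappa_M$ where $M=K(\psi)$, and then quotes Stark's Theorem~1 directly as an effective lower bound for the residue of a Dedekind zeta function of the shape $\kappa_M^{-1}\ll n_M^{n_M}d_M^{1/n_M}$; the zero $\beta$ never enters, so that argument does not require the exceptional zero to exist. You instead route through $\beta$: your Step~1 is precisely the paper's \cref{kappa_LB_zero} ($\kappa_{\psi}\gg 1-\beta$), proved by the same Mellin-inversion and contour-shift argument the paper attributes to Stark's Lemma~4, so this step duplicates a lemma the paper already has; your Step~2 then invokes Stark's effective lower bound for a real zero of $\zeta_K$ (resp.\ $\zeta_{K(\psi)}$, using conductor--discriminant), which in effect unwinds the proof of Stark's Theorem~1 in the exceptional case. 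What each buys: the paper's reduction is shorter and unconditional, while yours sidesteps the comparison of $\kappa_{\psi}$ with $\kappa_K$ or $\kappa_M$ --- a comparison that is indeed slightly delicate because of the Euler factors at $\kq$ (as stated in the paper, $\kappa_{\psi}\ge\kappa_K$ is backwards, since $\kappa_{\psi}=\frac{\varphi_K(\kq)}{\N\kq}\kappa_K$ in the principal case); in your version those imprimitive factors only enter through the upper bounds of \cref{F_psi-Lemma}(iii), where they are harmless. Two cautions: your Step~2 rests on the zero-bound form of Stark's result having exactly the strength $1-\beta\gg (n^{n}d^{1/n})^{-1}$ with effective constants, the same level of citation the paper applies to the residue form, and in the quadratic case you should write $\zeta_{K(\psi)}=\zeta_K\cdot L(s,\psi^{*})$ with $\psi^{*}$ the primitive character inducing $\psi$ (the conclusion $\zeta_{K(\psi)}(\beta)=0$ still holds, since the omitted Euler factors do not vanish on $(0,1)$); finally, both proofs must absorb the $4^{n_K}$ arising from the degree $2n_K$ of $K(\psi)$ into the stated constants.
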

\begin{proof} This is a rephrasing of \cite[Theorem 1]{Stark} to our context. To be clear, if $\psi$ is principal then $\kappa_{\psi} \geq \kappa_K$ so the result follows from \cite[Theorem 1]{Stark}. If $\psi$ is quadratic, then consider the quadratic extension of $K$ given by $M = K(\psi)$. It follows that $\kappa_{\psi} \geq \kappa_M$ so we can once again apply \cite[Theorem 1]{Stark} to obtain the desired bound. 
\end{proof} 
\begin{thm}[Brauer-Siegel] \label{kappa_LB_ineffective}
For $\delta > 0$, 
\[
\frac{1}{\kappa_{\psi}} \ll_{\delta} d_{\psi}^{\delta}
\]
where the implicit constant is ineffective. 
\end{thm}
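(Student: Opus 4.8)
The plan is to reduce the statement directly to the classical Brauer–Siegel theorem, exactly as the proof of the effective version (\cref{kappa_LB_effective}) reduces to Stark's theorem. Recall from \eqref{def:kappa_psi} that
\[
\kappa_\psi = \frac{\varphi_K(\kq)}{\N\kq}\,\kappa_K \cdot
\begin{cases} L(1,\psi) & \text{if $\psi$ is quadratic},\\ 1 & \text{if $\psi$ is principal}.\end{cases}
\]
Since $\varphi_K(\kq)/\N\kq = \prod_{\kp\mid\kq}(1-\N\kp^{-1}) < 1$, this factor only hurts us, so I would first note that it suffices to lower bound the ``analytic'' part: in the principal case we need $\kappa_K \gg_\delta d_K^{-\delta}$, and in the quadratic case $\kappa_K L(1,\psi)\gg_\delta (d_K^2\N\kq)^{-\delta}$.

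In the principal case this is immediate: the Brauer–Siegel theorem asserts $\log(h_K R_K) \sim \tfrac12 \log d_K$ as $d_K\to\infty$ (over any sequence of number fields, or one may restrict to a fixed degree and invoke the cruder statement), and by the analytic class number formula $\kappa_K = \dfrac{2^{r_1}(2\pi)^{r_2} h_K R_K}{w_K \sqrt{d_K}}$, so $\kappa_K = d_K^{-1/2+o(1)} \gg_\delta d_K^{-\delta}$; the constant is ineffective precisely because Siegel's argument is. In the quadratic case I would mimic the trick in the proof of \cref{kappa_LB_effective}: set $M = K(\psi)$, the quadratic extension of $K$ cut out by $\psi$, so that $\zeta_M(s) = \zeta_K(s) L(s,\psi)$ and hence $\kappa_M = \kappa_K L(1,\psi) = \N\kq\,\kappa_\psi/\varphi_K(\kq) \le \kappa_\psi \cdot (\text{something} \ge 1)$—more precisely $\kappa_\psi \ge \kappa_M \cdot \varphi_K(\kq)/\N\kq$, which still needs care, so instead I would simply observe $\kappa_\psi = \tfrac{\varphi_K(\kq)}{\N\kq}\kappa_M$ and bound $\varphi_K(\kq)/\N\kq$ from below. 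Here a cheap bound like $\varphi_K(\kq)/\N\kq \gg (\log\log(3\N\kq))^{-1}\cdot e^{-O(n_K)}$ or even just absorbing it into $d_\psi^\delta$ using $\varphi_K(\kq)/\N\kq \gg_\delta (\N\kq)^{-\delta}$ (via \cref{PR-NaivePrimeSums}(i), which controls $\sum_{\kp\mid\kq}\N\kp^{-1}$) suffices, since $\N\kq \mid d_\psi$. Then Brauer–Siegel applied to $M$ gives $\kappa_M \gg_\delta d_M^{-\delta/2}$, and since $d_M \ll d_K^2\,\N\fk{f}_\psi \le d_K^2\,\N\kq = d_\psi$ by the conductor–discriminant formula, we get $\kappa_M \gg_\delta d_\psi^{-\delta/2}$, hence $\kappa_\psi \gg_\delta d_\psi^{-\delta}$ after combining the two $\delta$-losses and renaming $\delta$.

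The only genuine subtlety—and the ``main obstacle''—is bookkeeping the imprimitive factor $\varphi_K(\kq)/\N\kq$ and the passage $d_M \ll d_\psi$: one must check that $\fk{f}_\psi \mid \kq$ (which holds since $\psi \pmod{\kq}$) and invoke the tower/conductor–discriminant formula $d_M = d_K^2 \N\fk{f}_\psi$ for the quadratic extension $M/K$, together with the trivial $\N\fk{f}_\psi \le \N\kq$. Everything else is a direct citation of Brauer–Siegel \cite{Brauer} (or the classical Siegel–Brauer statement, e.g.\ as in Lang's \emph{Algebraic Number Theory}) and the analytic class number formula; the ineffectivity is inherited wholesale from Siegel's lower bound for $L(1,\chi)$ of a real character, which underlies Brauer–Siegel.
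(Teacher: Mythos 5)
Your proposal follows essentially the same route as the paper: its proof simply repeats the argument of \cref{kappa_LB_effective} with Stark's theorem replaced by the Brauer--Siegel bound $\kappa_M \gg_{\epsilon} d_M^{-\epsilon}$, applied to $K$ in the principal case and to the quadratic extension $M = K(\psi)$ in the quadratic case, exactly as you do (with $d_M \le d_K^2\,\N\kq = d_\psi$ coming from the conductor--discriminant formula). If anything you are more careful than the paper's two-line proof, which silently ignores the imprimitive Euler factors (asserting $\kappa_{\psi} \ge \kappa_K$, resp.\ $\kappa_{\psi} \ge \kappa_M$) that you explicitly propose to bound from below.
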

\begin{proof} Similar to \cref{kappa_LB_effective} instead of using \cite[Theorem 1]{Stark} to bound the residues of Dedekind zeta functions, we apply the celebrated Brauer-Siegel theorem:  
\[
\kappa_M \gg_{\epsilon} d_M^{-\epsilon}
\]
for any number field $M$, where the implicit constant is ineffective. See \cite{Brauer} for details. 
\end{proof}
\cref{kappa_LB_ineffective} is the only result with ineffective constants so unless otherwise stated, \emph{all implicit constants are effective and absolute}. 

 \begin{lem}[Stark]  \label{kappa_LB_zero}  $\kappa_{\psi} \gg 1-\beta$. 
 \end{lem}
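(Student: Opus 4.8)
The plan is to exploit the fact that $\beta$ is a zero of $L(s,\psi)$, or equivalently of $F_\psi(s)$, together with the classical lower bound for $\kappa_\psi$ from \cref{kappa_LB_effective} (Stark). The key identity is the mean-value formula: for a suitable smooth weight, Mellin inversion gives
\[
\sum_{\kn} \lambda(\kn) e^{-\N\kn/x} = \frac{1}{2\pi i}\int_{(2)} F_\psi(s)\,\Gamma(s)\,x^s\,ds,
\]
and shifting the contour to $\Re\{s\} = \beta$ picks up the residue $\kappa_\psi \,\Gamma(1)\, x$ at $s=1$. Since $\lambda(\kn)\ge 0$ for $\psi$ principal and $\lambda(\kn) = \sum_{\km\mid\kn}\psi(\km)\ge 0$ for $\psi$ quadratic (being a nonnegative multiplicative function), the left side is nonnegative, so $\kappa_\psi x + (\text{shifted integral}) \ge 0$; rearranging,
\[
\kappa_\psi x \ge -\frac{1}{2\pi i}\int_{(\beta)} F_\psi(s)\Gamma(s)x^s\,ds,
\]
but this alone only bounds $\kappa_\psi$ from below in terms of a negative quantity, so I need to be cleverer.

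The sharper route is to use $F_\psi(\beta) = 0$ directly. First I would write, for $\sigma_0$ slightly less than $1$ but with $1 - \sigma_0$ comparable to $1-\beta$, the identity coming from integrating $F_\psi(s)\Gamma(s)x^{s-1}$ and comparing the contour at $\Re\{s\}=1+\delta$ with the contour at $\Re\{s\} = \sigma_0$. Actually the cleanest argument estimates $F_\psi(s)$ near $s=1$ by Taylor expansion: since $F_\psi(s)$ has a simple pole at $s=1$ with residue $\kappa_\psi$ and vanishes at $s=\beta$, we can write $F_\psi(s) = \frac{\kappa_\psi}{s-1} + G(s)$ with $G$ holomorphic near $s=1$, and $F_\psi(\beta)=0$ forces $G(\beta) = -\frac{\kappa_\psi}{\beta-1} = \frac{\kappa_\psi}{1-\beta}$. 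Then one bounds $|G(s)|$ on a small circle around $s=1$ of radius comparable to $1-\beta$ by the maximum modulus principle, using the convexity bound for $F_\psi$ from \cref{F_psi-Lemma}(iii) together with the crude lower bound for $|F_\psi|$ near the pole. Comparing $|G(\beta)| = \frac{\kappa_\psi}{1-\beta}$ with an upper bound $|G(s)| \ll (\text{powers of } d_\psi)^{1-\beta}$ on that circle, and invoking Stark's bound $1/\kappa_\psi \ll (\text{powers of } W_\psi)$ to absorb the analytic conductor factors raised to the small power $1-\beta$, yields $\frac{\kappa_\psi}{1-\beta} \gg 1$, i.e. $\kappa_\psi \gg 1-\beta$.

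Concretely the steps are: (1) set up $F_\psi(s) = \frac{\kappa_\psi}{s-1} + G(s)$ and record $G(\beta) = \frac{\kappa_\psi}{1-\beta}$; (2) on the circle $|s-1| = r$ with $r = c(1-\beta)$ for a small absolute $c$, bound $|F_\psi(s)|$ above using \cref{F_psi-Lemma}(iii) — note that on this circle $1-\sigma+\delta \ll 1-\beta + \delta$, so $F_\psi(s) \ll (d_\psi (2+|t|)^{2n_K})^{O(1-\beta)} e^{O(n_K)} \ll W_\psi^{O(1-\beta)} e^{O(n_K)}$ after taking $\delta$ comparable to $1-\beta$ and using $|t|\le r$; (3) deduce $|G(s)| \ll W_\psi^{O(1-\beta)} e^{O(n_K)}$ on that circle and hence, by the maximum principle, at $s=\beta$ as well; (4) combine with $G(\beta) = \frac{\kappa_\psi}{1-\beta}$ to get $\kappa_\psi \ll (1-\beta)\, W_\psi^{O(1-\beta)} e^{O(n_K)}$ — wait, that's the wrong direction; instead I use $|G(\beta)| = \frac{\kappa_\psi}{1-\beta}$ as a \emph{lower} bound obstruction, which forces the upper bound on $|G|$ to be at least $\frac{\kappa_\psi}{1-\beta}$, giving $\frac{\kappa_\psi}{1-\beta} \ll W_\psi^{O(1-\beta)} e^{O(n_K)}$ — still the wrong direction.

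Let me reconsider: the correct argument runs the other way. One shows $\kappa_\psi = (1-\beta)\cdot(-G(\beta))$ has $-G(\beta) \gg 1$, by bounding $G$ \emph{below} near $\beta$. The trick (as in the standard Siegel-zero / Dirichlet class number arguments) is to note that $F_\psi(\sigma) > 0$ for real $\sigma > 1$ since $\lambda(\kn) \ge 0$, and to estimate $F_\psi$ at a real point $\sigma_0 = 1 + \lambda(1-\beta)$ slightly to the right of $1$ from below by its Euler product or by $F_\psi(\sigma_0) \ge 1$ (the $\kn = (1)$ term), while also $F_\psi(\sigma_0) = \frac{\kappa_\psi}{\sigma_0 - 1} + G(\sigma_0)$; since $F_\psi(\beta) = 0$ we have $G(\beta) = \frac{\kappa_\psi}{1-\beta}$, and since $|G(\sigma_0) - G(\beta)| \le (\sigma_0 - \beta)\max|G'|$ with $\max|G'| \ll W_\psi^{O(1-\beta)}e^{O(n_K)}$ on a disc of radius $O(1-\beta)$, we get
\[
1 \le F_\psi(\sigma_0) = \frac{\kappa_\psi}{\sigma_0-1} + G(\sigma_0) \le \frac{\kappa_\psi}{\sigma_0-1} + \frac{\kappa_\psi}{1-\beta} + (\sigma_0-\beta)\,W_\psi^{O(1-\beta)}e^{O(n_K)}.
\]
Choosing the free parameters ($\lambda$, $\delta$) so that $W_\psi^{O(1-\beta)} = e^{O(n_K)}$ — which is legitimate because $\log W_\psi \ll n_K \log(n_K d_K \N\kq) \cdot$(something), hmm, this requires $1-\beta \ll 1/\log W_\psi$, exactly the definition \eqref{RealZero} of the exceptional zero — the last term is $O((1-\beta)e^{O(n_K)})$, and Stark's \cref{kappa_LB_effective} gives $1/\kappa_\psi \ll W_\psi^{1/n_K}e^{O(n_K)}$, hence $(1-\beta)/\kappa_\psi \ll (1-\beta)W_\psi^{1/n_K}e^{O(n_K)} \ll e^{O(n_K)}$; feeding this back, the displayed inequality rearranges to $\kappa_\psi \gg (1-\beta)$.

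\textbf{Main obstacle.} The delicate point is the bookkeeping of the exponents: one must choose the contour radius $r$ and the convexity parameter $\delta$ both comparable to $1-\beta$ so that every factor of the analytic conductor $d_\psi$ or $W_\psi$ appears only to the power $O(1-\beta)$, and then absorb these into $e^{O(n_K)}$ using the defining relation $(1-\beta)\log(n_K^{n_K}d_K\N\kq) = 1/\eta$ from \eqref{RealZero} together with Stark's lower bound. Making the derivative estimate on $G$ (equivalently, applying Cauchy's estimate to $F_\psi(s) - \frac{\kappa_\psi}{s-1}$ on a disc of radius $\asymp 1-\beta$) uniformly and effectively is the only real work; everything else is routine Mellin-free complex analysis. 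Since the paper attributes this to Stark, I expect the intended proof is exactly this maximum-principle / interpolation argument, and I would present it in that form.
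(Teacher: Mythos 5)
There is a genuine gap in the route you finally settle on. Writing $F_\psi(s)=\frac{\kappa_\psi}{s-1}+G(s)$ and comparing $G(\sigma_0)$ with $G(\beta)=\frac{\kappa_\psi}{1-\beta}$ requires the error $|G(\sigma_0)-G(\beta)|$ to be a small absolute constant, and your estimates do not deliver that. First, Cauchy's estimate on a disc of radius $r\asymp 1-\beta$ gives $\max|G'|\ll \sup|G|/r$, not $\sup|G|$; so even granting $\sup|G|\ll e^{O(n_K)}$ on the circle, you get $(\sigma_0-\beta)\max|G'|\ll e^{O(n_K)}$, not $O\big((1-\beta)e^{O(n_K)}\big)$, and this swamps the lower bound $F_\psi(\sigma_0)\ge 1$. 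Second, the plan to take the convexity parameter $\delta\asymp 1-\beta$ in \cref{F_psi-Lemma}(iii) is not legitimate: the factor written there as $e^{O_\delta(n_K)}$ comes from $\zeta_{\Q}(1+\delta)^{n_K}$ in \cref{ConvexityBd} and blows up like $(1-\beta)^{-n_K}$ as $\delta\to 0$, so "$W_\psi^{O(1-\beta)}e^{O(n_K)}$'' is not available on a circle of radius $\asymp 1-\beta$. Third, even in the most optimistic accounting an error of size $(1-\beta)e^{O(n_K)}$ is not $o(1)$ uniformly in $K$ (for $\eta\ge 20$ one only has $1-\beta\gg 1/(\eta\, n_K\log n_K)$ up to conductor logs), whereas the lemma must hold with an absolute implied constant. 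The structural problem is that your argument evaluates everything at a single fixed point near $s=1$ and so has no free parameter with which to beat down the $e^{O(n_K)}$ and conductor factors.

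Ironically, the Mellin set-up you wrote down first and then abandoned is essentially the paper's (Stark's) proof. One takes
\[
S=\sum_{\kn}\frac{\lambda(\kn)}{(\N\kn)^{\beta}}e^{-\N\kn/y}\ \gg\ 1
\]
(the term $\kn=\cO$ alone suffices, by positivity of $\lambda$), Mellin-inverts to $\frac{1}{2\pi i}\int_{(2)}F_\psi(s+\beta)\Gamma(s)y^s\,ds$, and shifts to $\Re\{s\}=\tfrac12-\beta$. The pole of $\Gamma$ at $s=0$ contributes nothing precisely because $F_\psi(\beta)=0$, the pole at $s=1-\beta$ contributes $\kappa_\psi\Gamma(1-\beta)y^{1-\beta}$, and the shifted integral is $\ll W_\psi^{3/8}(\N\kq)^{1/4}e^{O(n_K)}y^{-1/2}\cdot y^{1-\beta}$ by convexity with a \emph{fixed} $\delta$. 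The crucial feature absent from your argument is the $y^{-1/2}$ decay: choosing $y=t\cdot W_\psi^{3/4}(\N\kq)^{1/2}e^{O(n_K)}$ with $t$ a large absolute constant makes the error an arbitrarily small multiple of $y^{1-\beta}$, while $y^{1-\beta}=O(1)$ by the defining relation \eqref{RealZero}. This forces $\kappa_\psi\Gamma(1-\beta)\gg 1$, and $\Gamma(1-\beta)\asymp(1-\beta)^{-1}$ finishes the proof. I would recommend returning to your first approach and completing it along these lines rather than pursuing the maximum-principle variant.
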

\begin{proof} This is again an analogous rephrasing of \cite[Lemma 4]{Stark} to our context. 
\end{proof}

\begin{lem} \label{Lambda_InitialSum} For $\delta > 0$, 
\[
\sum_{\kn } \frac{\lambda(\kn)}{(\N\kn)^{\beta}} e^{-\N\kn/y} =  \kappa_{\psi} \Gamma(1-\beta) \big\{ 1 + O(\delta) + O\big( (1-\beta) \log y \big) \big\}
\]
provided
\begin{equation}
\label{Lambda_InitialSum-yRange}
y \geq W_{\psi}^{1/2+\delta} (\N\kq)^{\delta} e^{M_{\delta}n_K}
\end{equation}
for some sufficiently large constant $M_{\delta} \geq 1$. 
\end{lem}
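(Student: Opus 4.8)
The plan is to evaluate $S:=\sum_{\kn}\lambda(\kn)(\N\kn)^{-\beta}e^{-\N\kn/y}$ by Mellin inversion, following the (commented-out) argument for \cref{kappa_LB_zero}. Using $e^{-u}=\tfrac1{2\pi i}\int_{(2)}\Gamma(s)u^{-s}\,ds$ with $u=\N\kn/y$ and interchanging sum and integral (legitimate since $\Re(s+\beta)=2+\beta>1$ gives absolute convergence), one obtains
\[
S=\frac1{2\pi i}\int_{(2)}F_{\psi}(s+\beta)\,\Gamma(s)\,y^{s}\,ds .
\]
I would then shift the contour leftward to the line $\Re s=\theta-\beta$, where $\theta>0$ is a small parameter (to be taken a small absolute multiple of $\delta$); the shift is justified because \cref{F_psi-Lemma}(iii) bounds $F_{\psi}$ polynomially in $|t|$ on vertical strips while $\Gamma$ decays exponentially by \eqref{Gamma_VerticalStrip}, so the horizontal pieces vanish in the limit. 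Inside the strip the integrand is meromorphic with possible poles only at $s=1-\beta$ (the simple pole of $F_{\psi}$, with residue $\kappa_{\psi}\Gamma(1-\beta)y^{1-\beta}$) and at $s=0$ (the pole of $\Gamma$); but $F_{\psi}(\beta)=0$ by \cref{F_psi-Lemma}(ii), so the zero of $F_{\psi}(s+\beta)$ cancels the pole of $\Gamma$ at $s=0$ and that point contributes nothing. Hence
\[
S=\kappa_{\psi}\Gamma(1-\beta)\,y^{1-\beta}+\frac1{2\pi i}\int_{(\theta-\beta)}F_{\psi}(s+\beta)\Gamma(s)y^{s}\,ds .
\]

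To bound the remaining integral, on the line $\Re(s+\beta)=\theta$ I would insert the convexity bound \cref{F_psi-Lemma}(iii) (which there contributes $W_{\psi}^{1/2}n_K^{-cn_K}(\N\kq)^{O(\theta)}(2+|t|)^{cn_K}e^{O_{\theta}(n_K)}$ with $c\in\{\tfrac12,1\}$) together with $|\Gamma(\theta-\beta+it)|\ll e^{-|t|}$ from \eqref{Gamma_VerticalStrip}, the latter being valid since $\eta\ge20$ forces $\beta\ge\tfrac34$, keeping $\theta-\beta$ inside $[-2,2]$ and away from $0$. The one genuinely necessary estimate is $\int_{\R}(2+|t|)^{cn_K}e^{-|t|}\,dt\ll (cn_K)!\le n_K^{cn_K}e^{O(n_K)}$ by Stirling, which exactly absorbs the $n_K^{-cn_K}$; this yields that the shifted integral is $\ll_{\theta}W_{\psi}^{1/2}(\N\kq)^{O(\theta)}e^{O_{\theta}(n_K)}\,y^{\theta-\beta}$.

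To finish, divide by $\kappa_{\psi}\Gamma(1-\beta)$. Since $\beta\ge\tfrac34$ we have $\Gamma(1-\beta)=\Gamma(2-\beta)/(1-\beta)\asymp(1-\beta)^{-1}$, and combined with $\kappa_{\psi}\gg1-\beta$ from \cref{kappa_LB_zero} this gives $\kappa_{\psi}\Gamma(1-\beta)\gg1$; so it suffices that the shifted integral be $\ll_{\delta}\delta$. Feeding in the hypothesis $y\ge W_{\psi}^{1/2+\delta}(\N\kq)^{\delta}e^{M_{\delta}n_K}$ and choosing $\theta$ a small enough multiple of $\delta$, the exponents of $W_{\psi}$ and of $\N\kq$ in the resulting bound both turn negative, and the estimate collapses to $e^{O_{\delta}(n_K)-M_{\delta}(1-\theta)n_K}$, which is $\le\delta$ once $M_{\delta}$ is taken large enough in terms of $\delta$ (recall $W_{\psi},\N\kq\ge1$ and $n_K\ge1$). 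Combining, $S=\kappa_{\psi}\Gamma(1-\beta)\{y^{1-\beta}+O(\delta)\}$, and writing $y^{1-\beta}=1+O((1-\beta)\log y)$ gives the claimed identity. The main obstacle is the contour estimate: one must arrange that the $n_K$-dependence from the convexity bound (the $(2+|t|)^{cn_K}$ integrated against the Gamma decay) reassembles cleanly into $e^{O_{\delta}(n_K)}$ rather than into a term that would swamp the lower bound $\kappa_{\psi}\Gamma(1-\beta)\gg1$; once the pole at $s=0$ is recognized to be cancelled by $F_{\psi}(\beta)=0$, everything else is bookkeeping.
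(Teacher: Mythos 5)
Your proposal is correct and follows essentially the same route as the paper: Mellin inversion, a leftward contour shift picking up the simple pole at $s=1-\beta$ (with the pole of $\Gamma$ at $s=0$ cancelled by $F_{\psi}(\beta)=0$), a bound on the shifted integral via \cref{F_psi-Lemma}(iii) and \eqref{Gamma_VerticalStrip}, and the conclusion via \cref{kappa_LB_zero} and the lower bound on $y$. The only differences are cosmetic: the paper shifts to $\Re\{s\}=\tfrac12-\beta$ and factors out $y^{1-\beta}$ before comparing main and error terms, whereas you shift to $\Re\{s\}=\theta-\beta$ and bound the error absolutely, which works equally well here.
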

\begin{proof}  For the upper bound, apply Mellin inversion to see
\[
S :=  \sum_{\kn} \frac{\lambda(\kn)}{(\N\kn)^{\beta}}e^{- \N\kn/y} = \frac{1}{2\pi i} \int_{2-i\infty}^{2+i\infty} F_{\psi}(s+\beta) \Gamma(s) y^s ds.
\]
Shift the line of integration to $\Re\{s\} = 1/2-\beta$, pick up the pole $s=1-\beta$, and bound the remaining integral  using \cref{F_psi-Lemma}(iii) and  \eqref{Gamma_VerticalStrip}.  Therefore,
\[
S =\Big\{ \kappa_{\psi} \Gamma(1-\beta) + O_{\delta}\Big( \frac{W_{\psi}^{1/4+\delta/2} (\N\kq)^{\delta/2}  e^{O_{\delta}(n_K)}  }{y^{1/2}}  \Big) \Big\} y^{1-\beta}.
\]
From \cref{kappa_LB_zero} and condition \eqref{Lambda_InitialSum-yRange}, it follows that the main term dominates the error provided $M_{\delta}$ is sufficiently large. This yields the desired result upon writing $y^{1-\beta} = 1 + O((1-\beta)\log y)$. 
\end{proof}

\begin{lem} \label{Lambda_ShortSum} For $\delta > 0$ and $y_2 \geq 3y_1$, 
\[
\sum_{\kn} \frac{\lambda(\kn)}{\N\kn}\big( e^{-\N\kn/y_2} - e^{-\N\kn/y_1}\big) \ll \kappa_{\psi} \log(y_2/y_1)
\]
provided 
\[
y_1 \geq  \kappa_{\psi}^{-1-\delta} W_{\psi}^{1/2+\delta} (\N\kq)^{\delta} e^{M_{\delta}n_K}
\]
for some sufficiently large constant $M_{\delta} \geq 1$. 
\end{lem}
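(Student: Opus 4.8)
\emph{Proof plan.} The plan is to reduce the estimate to the smooth sum $T(y):=\sum_{\kn}\lambda(\kn)e^{-\N\kn/y}$ and to estimate $T(y)$ by Mellin inversion, moving the contour only slightly past the pole of $F_\psi$. Since $\lambda(\kn)\ge 0$ by \cref{LambdaAndRho}, writing $e^{-\N\kn/y_2}-e^{-\N\kn/y_1}=\N\kn\int_{y_1}^{y_2}y^{-2}e^{-\N\kn/y}\,dy$ and interchanging (Tonelli) gives, with the left side visibly non-negative,
\[
\sum_{\kn}\frac{\lambda(\kn)}{\N\kn}\bigl(e^{-\N\kn/y_2}-e^{-\N\kn/y_1}\bigr)=\int_{y_1}^{y_2}\frac{T(y)}{y^2}\,dy .
\]
So it suffices to show $T(y)=\kappa_\psi y+(\text{error})$ with an error which, divided by $y^2$ and integrated over $[y_1,y_2]$, is $\ll\kappa_\psi$.

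\textbf{Estimating $T(y)$.} By Mellin inversion $T(y)=\frac{1}{2\pi i}\int_{2-i\infty}^{2+i\infty}F_\psi(s)\Gamma(s)y^s\,ds$. I would shift the line to $\Re(s)=\sigma_0$ for a small $\sigma_0\in(0,\tfrac12)$ to be fixed, crossing only the simple pole of $F_\psi$ at $s=1$, whose residue contributes $\kappa_\psi\Gamma(1)y=\kappa_\psi y$ by \eqref{def:kappa_psi}. On the shifted line I bound $F_\psi$ by \cref{F_psi-Lemma}(iii) \emph{with its free parameter taken equal to $\sigma_0$} and $\Gamma$ by \eqref{Gamma_VerticalStrip}; the resulting $t$-integral is $\ll\int_{\R}(2+|t|)^{n_K}e^{-|t|}\,dt\ll\Gamma(n_K+1)\ll n_K^{n_K}e^{O(n_K)}$ for $\psi$ quadratic (and $\ll\Gamma(\tfrac{n_K}{2}+1)$ for $\psi$ principal), which after using $d_\psi^{1/2}n_K^{n_K}=W_\psi^{1/2}$ (resp. $d_\psi^{1/2}n_K^{n_K/2}=W_\psi^{1/2}$) yields, for every $y>0$,
\[
T(y)=\kappa_\psi\,y+O_{\sigma_0}\!\bigl(y^{\sigma_0}\,W_\psi^{1/2}\,(\N\kq)^{\sigma_0}\,e^{O_{\sigma_0}(n_K)}\bigr),
\]
where the factor $(\N\kq)^{\sigma_0}$ is needed only in the principal case (when $\psi$ is quadratic it is already absorbed into $W_\psi$).

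\textbf{Assembling and choosing $\sigma_0$.} Feeding this into the integral and using $\int_{y_1}^{y_2}y^{\sigma_0-2}\,dy\le (1-\sigma_0)^{-1}y_1^{\sigma_0-1}$ gives
\[
\sum_{\kn}\frac{\lambda(\kn)}{\N\kn}\bigl(e^{-\N\kn/y_2}-e^{-\N\kn/y_1}\bigr)=\kappa_\psi\log\!\frac{y_2}{y_1}+O_{\sigma_0}\!\Bigl(\frac{W_\psi^{1/2}(\N\kq)^{\sigma_0}e^{O_{\sigma_0}(n_K)}}{(1-\sigma_0)\,y_1^{\,1-\sigma_0}}\Bigr).
\]
Now take $\sigma_0=\delta/(1+\delta)$, so that $1-\sigma_0=1/(1+\delta)$ and $(1+\delta)(1-\sigma_0)=1$. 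Raising the hypothesis $y_1\ge\kappa_\psi^{-1-\delta}W_\psi^{1/2+\delta}(\N\kq)^{\delta}e^{M_\delta n_K}$ to the power $1-\sigma_0$ gives $y_1^{1-\sigma_0}\ge\kappa_\psi^{-1}W_\psi^{(1/2+\delta)/(1+\delta)}(\N\kq)^{\delta/(1+\delta)}e^{M_\delta n_K/(1+\delta)}$; substituting, the surviving power of $W_\psi$ is $\tfrac12-\tfrac{1/2+\delta}{1+\delta}=-\tfrac{\delta}{2(1+\delta)}<0$ and the surviving power of $\N\kq$ is $\sigma_0-\tfrac{\delta}{1+\delta}=0$, so both those factors are $\le 1$, while $e^{M_\delta n_K/(1+\delta)}$ dominates $e^{O_{\sigma_0}(n_K)}$ once $M_\delta$ is taken sufficiently large in terms of $\delta$. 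Hence the error is $\le(1+\delta)\kappa_\psi\ll\kappa_\psi$. (One may assume $\delta\in(0,1)$, the regime of interest; a larger $\delta$ reduces to it after the trivial bound $\kappa_\psi\ll_\delta W_\psi^{\delta}e^{O_\delta(n_K)}$, obtained from a Cauchy estimate for $\kappa_\psi=\Res_{s=1}F_\psi$ on $|s-1|=\delta$ via \cref{F_psi-Lemma}(iii).)

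\textbf{Conclusion and the main difficulty.} Since $y_2\ge 3y_1$ we have $\log(y_2/y_1)\ge\log 3>1$, so the whole sum equals $\kappa_\psi\log(y_2/y_1)+O(\kappa_\psi)\ll\kappa_\psi\log(y_2/y_1)$, which is the claim (with an absolute implied constant once $M_\delta$ is chosen). The crux is the exponent $1+\delta$ on $\kappa_\psi^{-1}$ in the hypothesis: a routine shift of the Mellin contour to $\Re(s)=\tfrac12$ (as in the proof of \cref{Lambda_InitialSum}) would only give $T(y)=\kappa_\psi y+O(W_\psi^{1/4+\delta}(\N\kq)^{\delta}y^{1/2})$ and hence require $y_1\gg\kappa_\psi^{-2}W_\psi^{1/2+\delta}(\N\kq)^{\delta}e^{O(n_K)}$, strictly stronger than what is assumed. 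Shifting instead to a small $\sigma_0>0$ makes the error scale like $y_1^{-(1-\sigma_0)}$, so the required lower bound for $y_1$ carries the exponent $1/(1-\sigma_0)$ on $\kappa_\psi^{-1}$, which can be made arbitrarily close to $1$; the obstruction to reaching $1$ exactly is that the convexity bound \cref{F_psi-Lemma}(iii) degenerates as its parameter tends to $0$, forcing $\sigma_0$ to stay bounded away from $0$ and leaving the residual $\delta$. This also explains why, in contrast with \cref{Lambda_InitialSum}, no use of \cref{kappa_LB_zero} (i.e. of $\Gamma(1-\beta)$ and the size of $1-\beta$) is needed: the factor $\kappa_\psi^{-1-\delta}$ is already built into the hypothesis.
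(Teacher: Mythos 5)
Your proposal is correct and follows essentially the same route as the paper: Mellin inversion, a contour shift to a line where $F_\psi$ is evaluated at real part $\approx\delta$, the convexity bound of \cref{F_psi-Lemma}(iii) together with \eqref{Gamma_VerticalStrip} on the shifted line, and the hypothesis on $y_1$ to make the error $\ll\kappa_\psi\ll\kappa_\psi\log(y_2/y_1)$. The only (cosmetic) difference is that the paper transforms the difference sum directly, obtaining $\frac{1}{2\pi i}\int F_\psi(s+1)\Gamma(s)(y_2^s-y_1^s)\,ds$ and reading off the residue $\kappa_\psi\log(y_2/y_1)$ at $s=0$, whereas you first write the difference as $\int_{y_1}^{y_2}T(y)y^{-2}\,dy$ and integrate the asymptotic for $T(y)$; your bookkeeping of the exponents with $\sigma_0=\delta/(1+\delta)$ is in fact slightly cleaner than the paper's.
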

\begin{proof} By Mellin inversion,
\[
S' := \sum_{\kn} \frac{\lambda(\kn)}{\N\kn}\big( e^{-\N\kn/y_2} - e^{-\N\kn/y_1}\big) = \frac{1}{2\pi i} \int_{2-i\infty}^{2+i\infty} F_{\psi}(s+1) \Gamma(s) \big\{ y_2^s - y_1^s\} ds.
\]
Shift the line of integration to $\Re\{s\} = -1+\delta$,  pick up the simple pole at $s=0$, and bound the remaining integral  using \cref{F_psi-Lemma}(iii) and  \eqref{Gamma_VerticalStrip}.  Thus, for $\delta > 0$, 
\[
S' = \kappa_{\psi} \log(y_2/y_1) + O_{\delta}\Big( \frac{W_{\psi}^{1/2+\delta/2} (\N\kq)^{\delta/2} e^{O_{\delta}(n_K)} }{y_1^{1-\delta}}  \Big).
\]
Since $\log(y_2/y_1) \gg 1$, the result follows from the condition on $y_1$. 
\end{proof}

\section{Application of the sieve}
\label{sec:SieveApplication}
\subsection{Sieve sequence}
\label{subsec:SieveSequence}
Fix a ray class $\cC \in \Cl(\kq)$ satisfying $\psi(\cC) = 1$ and retain the notation of \cref{subsec:ExceptionalCharacter}. Recall the Hecke $L$-function $L(s,\psi)$ is assumed to have a real zero 
\[
\beta = 1 - \frac{1}{\eta \log( n_K^{n_K} d_K\N\kq)}
\]
with $\eta \geq 20$.  Let $2 \leq z \leq x$. During the course of our arguments, the parameter $z$ will be chosen and the valid range of $x$ will be specified. We wish to apply the sieve to the sequence 
\begin{equation}
\cA = \cA(x) = \{ a_{\kn} \}_{\kn \subseteq \cO} 
\quad \text{ with } \quad a_{\kn} = \rho(\kn) e^{-\N\kn/x} \cdot  \mathbf{1}\{\kn \in \cC\}
\label{AS-SieveSequence}
\end{equation}
where $\rho(\kn)$ is defined in \eqref{def:rho} and $\mathbf{1}\{ \, \cdot \,\}$ is an indicator function. Choose the set of prime ideals to be
\begin{equation}
\cP = \{ \kp \subseteq \cO \text{ prime} : \psi(\kp) = 1\}
\label{AS-SievePrimes}
\end{equation}
and denote
\begin{equation}
\cD = \{ \kd \subseteq \cO \text{ square-free}: \kp \mid \kd \implies \psi(\kp) = 1\}.
\label{AS-SieveIdeals}
\end{equation}
\subsection{Local Densities}
\begin{lem} \label{AS-LocalDensities} Let $\kd \in \cD$. Then, for any $\delta > 0$, 
\[
|\cA_{\kd}| = \sum_{\substack{\kn \in \cC \\ \kd \mid \kn} } \rho(\kn) e^{-\N\kn/x}  = g(\kd) X +  r_{\kd} 
\]
with $X = b_{\psi} \kappa_{\psi}  \cdot \dfrac{x}{h(\kq)}$, where if $\psi$ is quadratic then
\begin{equation*}
\begin{aligned}
b_{\psi}&  = 2 \prod_{\psi(\kp) = 1} \Big(1-\frac{3}{\N\kp^2} + \frac{2}{\N\kp^3} \Big) \prod_{\psi(\kp)=-1} \Big( 1 - \frac{1}{\N\kp^2} \Big), \\
g(\kp)  & = \frac{2}{\N\kp +2} \quad \text{for $\kp \in \cP$}, \\
|r_{\kd}| & \ll \frac{x^{1/2+\delta}}{(\N\kd)^{1/2}} \cdot ( n_K^{n_K}  d_K \N\kq)^{(1+\delta)/2} \cdot e^{O_{\delta}(n_K)},
\end{aligned}
\end{equation*}
and if $\psi$ is principal then
\begin{equation*}
\begin{aligned}
b_{\psi}&  = \prod_{\kp \nmid \kq} \Big( 1 - \frac{1}{\N\kp^2} \Big), \\
g(\kp)  & = \frac{1}{\N\kp +1} \quad \text{for $\kp \in \cP$}, \\
|r_{\kd}| & \ll \frac{x^{1/2+\delta}}{(\N\kd)^{1/2}} \cdot ( n_K^{n_K}  d_K \N\kq)^{(1+\delta)/4} \cdot e^{O_{\delta}(n_K)}. 
\end{aligned}
\end{equation*}
\end{lem}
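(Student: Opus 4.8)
The plan is to compute $|\cA_{\kd}|$ by Mellin inversion, isolating the main term coming from the pole at $s=1$ and pushing the contour to the left to control the remainder $r_{\kd}$. First I would rewrite
\[
|\cA_{\kd}| = \sum_{\substack{\kn \in \cC \\ \kd \mid \kn}} \rho(\kn) e^{-\N\kn/x} = \frac{1}{h(\kq)} \sum_{\chi \pmod{\kq}} \bar{\chi}(\cC) \sum_{\kd \mid \kn} \rho(\kn) \chi(\kn) e^{-\N\kn/x},
\]
using orthogonality of Hecke characters to detect the condition $\kn \in \cC$. Since $\rho = \mu_K^2 \lambda$ is multiplicative and $\rho(\kp) \in \{1,2\}$ only for $\kp$ with $\psi(\kp)=1$ (by \cref{LambdaAndRho}), each inner sum is, via Mellin inversion, $\frac{1}{2\pi i}\int_{(2)} G_{\kd}(s,\chi)\Gamma(s) x^s\,ds$ where $G_{\kd}(s,\chi) = \sum_{\kd \mid \kn}\rho(\kn)\chi(\kn)(\N\kn)^{-s}$. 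The Euler product for $G_{\kd}(s,\chi)$ factors as a finite product over $\kp \mid \kd$ (contributing the divisibility constraint and forcing square-freeness at those primes) times a "tail" Euler product; comparing the tail with $F_{\psi\chi}(s)$ — which by \cref{F_psi-Lemma} (applied to the twisted character) has a simple pole at $s=1$ when $\chi$ is such that $\psi\chi$ is principal, i.e. $\chi \in \{\chi_0,\psi\}$ — reveals that only $\chi = \chi_0$ and $\chi=\psi$ contribute main terms, and since $\psi(\cC)=1$ their contributions \emph{add} rather than cancel.

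The core computation is the local factor bookkeeping: for $\psi$ quadratic one has $\rho(\kp)=2$, $\rho(\kp^2)=0$ when $\psi(\kp)=1$, and $\rho(\kp)=0$ when $\psi(\kp)=-1$, so the ratio of the relevant Euler product to $L(s,\chi_0)L(s,\psi)$ at $s=1$ produces exactly the constant $b_\psi$ and the divisor-dependent factor $\prod_{\kp\mid\kd}\frac{\rho(\kp)}{\N\kp+?}$ which, after matching against $g(\kp)=\frac{2}{\N\kp+2}$, gives $g(\kd)=\prod_{\kp\mid\kd}g(\kp)$ multiplicatively; the principal case is the same with $\rho(\kp)=1$ and $g(\kp)=\frac{1}{\N\kp+1}$. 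I would verify multiplicativity of $g$ and that $0\le g(\kp)<1$ directly from these formulas. The residue at $s=1$ of the full expression is then $g(\kd)\cdot b_\psi \kappa_\psi \cdot \frac{x}{h(\kq)} \cdot \Gamma(1) = g(\kd) X$ with $X = b_\psi\kappa_\psi x/h(\kq)$ as claimed (using $\Gamma(1)=1$ and absorbing the $y^{1-\beta}$-type factor, here $x^{1-1}=1$, trivially).

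For the remainder, I would shift each contour to $\Re\{s\} = 1/2 + \delta$ (or slightly past, say to $\tfrac12$) picking up no further poles, and bound the shifted integral using the convexity estimate of \cref{F_psi-Lemma}(iii) for $F_{\psi\chi}(s)$ together with the Gamma decay \eqref{Gamma_VerticalStrip}; the local factors at $\kp\mid\kd$ are each $O((\N\kp)^{-1/2-\delta}\cdot(\text{bounded}))$ on this line, contributing a total of $(\N\kd)^{-1/2}$ after crudely bounding $\prod_{\kp\mid\kd}(1+O((\N\kp)^{-1/2}))$ by $e^{O_\delta(n_K)}$ via \cref{PR-NaivePrimeSums}. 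Summing the $h(\kq)$ characters and invoking \cref{PR-RayClassGroup} to absorb the $h(\kq)$ into the $e^{O_\delta(n_K)}(n_K^{n_K}d_K\N\kq)^{O(\delta)}$ factor, this yields the stated bound $|r_{\kd}| \ll x^{1/2+\delta}(\N\kd)^{-1/2}(n_K^{n_K}d_K\N\kq)^{(1+\delta)/2}e^{O_\delta(n_K)}$ in the quadratic case (the exponent $1/4$ in the principal case comes from the milder convexity bound there, which involves only $d_K$ and not $d_K^2\N\kq$). The main obstacle is organizing the Euler-product comparison uniformly over the divisors $\kp\mid\kd$ while keeping every $K$-dependent factor explicit — in particular ensuring the "junk" local factors multiply out to $e^{O_\delta(n_K)}$ rather than something worse — and correctly tracking that the twisted $L$-functions $L(s,\chi_0\psi)=L(s,\psi)$ and $L(s,\psi^2)=L(s,\chi_0)$ are precisely the two that are non-vanishing-with-pole at $s=1$, so that the $\psi(\cC)=1$ hypothesis makes the main terms reinforce.
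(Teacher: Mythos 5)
Your plan follows essentially the same route as the paper's proof: orthogonality over $\Cl(\kq)$ plus Mellin inversion, the Euler-product factorization $f(s,\chi)=g_{\kd}(s,\chi)g(s,\chi)G(s,\chi)$ with $G(s,\chi)=L(s,\chi)L(s,\chi\psi)$ (resp.\ $L(s,\chi)$), the observation that only $\chi=\chi_0$ and $\chi=\psi$ contribute poles and that $\psi(\cC)=1$ makes their residues add, and the contour shift to $\Re\{s\}=\tfrac12+\delta$ with Rademacher's convexity bound and \eqref{Gamma_VerticalStrip}. One small correction: you do not need (and should not use) \cref{PR-RayClassGroup} to ``absorb'' $h(\kq)$ — a factor of $d_K^{1/2}\N\kq$ cannot be hidden in $(n_K^{n_K}d_K\N\kq)^{O(\delta)}e^{O_{\delta}(n_K)}$ — but the $\tfrac{1}{h(\kq)}$ prefactor from orthogonality already cancels the sum over the $h(\kq)$ characters when each integral is bounded uniformly in $\chi$, which is exactly what the paper does.
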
 
\begin{rem*} If $\kd \not\in \cD$, then $|A_{\kd}| = 0$ by \cref{LambdaAndRho}. Thus, for prime ideals $\kp \not\in \cP$, set $g(\kp) = 0$ and multiplicatively extend the function $g$  to all integral ideals of $\cO$. 
\end{rem*}
\begin{proof} We adapt the proof of \cite[Lemma 1]{HBSiegel} with some modifications when bounding the remainder terms $r_{\kd}$. Write
\[
f(s,\chi) := \sum_{ \substack{ \kn \subseteq \cO  \\ \kd \mid \kn} } \rho(\kn) \chi(\kn) (\N\kn)^{-s} \qquad \text{for $\Re\{s\} > 1$}
\]
so, by orthogonality and Mellin inversion, 
\begin{equation}
\begin{aligned}
\sum_{\substack{\kn \in \cC \\ \kd \mid \kn} } \rho(\kn) e^{-\N\kn/x}   
& = \frac{1}{h(\kq)} \sum_{\chi \pmod{\kq} } \bar{\chi}(\cC) \frac{1}{2\pi i} \int_{2 -i\infty}^{2+i\infty} f(s,\chi) \Gamma(s) x ^s ds. 
\end{aligned}
\label{S1Lemma1}
\end{equation}
Alternatively, we may write $f(s,\chi)$ as an Euler product to see that
\[
f(s,\chi) = \rho(\kd) \chi(\kd) (\N\kd)^{-s}  \times  \prod_{ \substack{ \kp \nmid \kd \\ \psi(\kp) = 1} }  \Big( 1 + \rho(\kp)  \frac{ \chi(\kp)}{(\N\kp)^{s}} \Big) \times  \prod_{ \substack{ \kp \nmid \kd \\ \psi(\kp) = -1} } 1.
\]
Note that prime ideals $\kp \mid \kd$ do not appear in the Euler product since $\rho(\kn) = 0 $ for $\kn$ not square-free. Including these analogous factors, we may write
\begin{equation}
f(s,\chi) = \prod_{ \substack{ \psi(\kp) = 1} }  \Big( 1 + \rho(\kp) \frac{ \chi(\kp)}{(\N\kp)^{s}} \Big) \times  \prod_{ \substack{ \psi(\kp) = -1} } 1 \times g_{\kd}(s,\chi) 
\label{f-EulerProd}
\end{equation}
where
\[
g_{\kd}(s,\chi) =  \rho(\kd) \chi(\kd) (\N\kd)^{-s}  \prod_{ \substack{\kp \mid \kd \\ \psi(\kp) = 1} }  \Big( 1 +  \rho(\kp) \frac{ \chi(\kp)}{(\N\kp)^{s}}  \Big)^{-1}.
\]
On the other hand, 
\begin{align*}
L(s,\chi) L(s, \chi\psi) & = \prod_{ \substack{ \psi(\kp) = 1} }  \Big( 1 -  2 \frac{\chi(\kp) }{(\N\kp)^{s}}  + \frac{\chi^2(\kp) }{(\N\kp)^{2s}} \Big)^{-1}  \times  \prod_{ \substack{ \psi(\kp) = -1} } \Big(1 -\frac{ \chi^2(\kp)}{ (\N\kp)^{2s}} \Big)^{-1}, \\
L(s,\chi) & = \prod_{\kp \nmid \kq} \Big( 1 - \frac{\chi(\kp)}{(\N\kp)^s} \Big)^{-1}. \\
\end{align*}
Upon comparing with \eqref{f-EulerProd}, we deduce
\begin{equation}
f(s,\chi) = 
g_{\kd}(s,\chi) g(s,\chi) G(s,\chi) 
\label{f-Factor}
\end{equation}
where
\begin{align*}
g(s,\chi) 
& =  
\begin{cases} 
\ds \prod_{ \substack{ \psi(\kp) = 1} }  \Big( 1 -  3 \frac{\chi^2(\kp) }{(\N\kp)^{2s}}  + 2\frac{\chi^3(\kp)}{ (\N\kp)^{3s} }\Big) \times  \prod_{ \substack{ \psi(\kp) = -1} } \Big(1 - \frac{\chi^2(\kp) }{(\N\kp)^{2s}} \Big) &\text{if $\psi$ is quadratic,}\\
\ds \prod_{\kp \nmid \kq} \Big(1-\frac{\chi^2(\kp)}{(\N\kp)^{2s}}\Big)  & \text{if $\psi$ is principal,}
\end{cases} \\\\
G(s,\chi) 
& =\begin{cases}  L(s,\chi) L(s,\chi\psi) & \text{if $\psi$ is quadratic,}\\\\
L(s,\chi) & \text{if $\psi$ is principal.}
\end{cases}\\
\end{align*}

Therefore, $f(s,\chi)$ has meromorphic continuation to $\C$ and is analytic in $\Re\{s\} > 1/2$, except possibly for a pole at $s=1$ when $\chi$ or $\chi\psi$ is principal. 

Furthermore, we claim
\begin{align}
\label{GrowthGd}
g_{\kd}(s,\chi) & \ll_{\delta} e^{O_{\delta}(n_K)} (\N\kd)^{-1/2}, \\
g(s,\chi)  & \ll_{\delta} e^{O_{\delta}(n_K)},
\label{GrowthG}
\end{align}
uniformly in the region $\Re(s) \geq 1/2+\delta$  for any $\delta > 0$. Here we ignore $s$ in neighborhoods of poles arising from local factors  of $g_{\kd}(s,\chi)$ with $\N\kp < 4$. To see the claim, notice \eqref{GrowthG} follows from \cref{PR-NaivePrimeSums}(i). Estimate \eqref{GrowthGd} follows from \cref{PR-NaivePrimeSums}(iii) with $a=1/2$ combined with the observation
\[
\rho(\kd) \ll \sum_{\kp \mid \kd}  1  \ll \log \N\kd \ll_{\delta} (\N\kd)^{\delta},
\]
thus proving the claim.

Now, we move the line of integration in \eqref{S1Lemma1} from $\Re\{s\} = 2$ to $\Re\{s\} = \tfrac{1}{2} + \delta$. This yields a main term of
\[
R =   \frac{x}{h(\kq)} \sum_{\chi \pmod{\kq} } \bar{\chi}(\cC)  \Res_{s=1} f(s,\chi) . 
\] 
Before computing $R$, observe that since $\psi(\cC) = 1$ and $\psi^2 = \chi_0$
\begin{align*}
 f(s,\chi_0) = f(s,\psi),  \qquad &
G(s,\chi_0) = F_{\psi}(s), \\
 g(1,\chi_0) = g(1,\psi), \qquad & 
g_{\kd}(1,\chi_0) = g_{\kd}(1,\psi) =  g(\kd), 
\end{align*}
where $F_{\psi}(s)$  and $g(\kd)$ are defined in \eqref{def:F_psi} and the statement of \cref{AS-LocalDensities} respectively. Therefore, if $\psi$ is quadratic, the main term $R$ picks up residues for $\chi = \chi_0$ and $\chi = \psi$. Namely,
\begin{align*}
R  & = \frac{x}{h(\kq)} \Big[ \Res_{s=1} f(s,\chi_0) + \bar{\psi}(\cC) \Res_{s=1} f(s, \psi) \Big] \\
 & = \frac{x}{h(\kq)} \cdot g(1, \chi_0) g(\kd)  \cdot \Big[ 2\Res_{s=1} F_{\psi}(s) \Big] \\
 & =  \frac{x}{h(\kq) } \cdot g(1,\chi_0) g(\kd) \cdot 2\kappa_{\psi} \\
 & = g(\kd) X
\end{align*}
since $b_{\psi} = 2 g(1,\chi_0)$ when $\psi$ is quadratic.  If $\psi$ is principal, the main term $R$ picks up a residue for $\chi = \chi_0$ only. In other words,  
\begin{align*}
R  & = \frac{x}{h(\kq)} \cdot \Res_{s=1} f(s,\chi_0)\\
 & = \frac{x}{h(\kq)} \cdot g(1,\chi_0) g(\kd)  \cdot \Res_{s=1} L(s,\chi_0) \\
 & =\frac{x}{h(\kq)} \cdot g(1,\chi_0) g(\kd)  \cdot \kappa_{\psi} \\
 & = g(\kd) X 
\end{align*}
since $b_{\psi} = g(1,\chi_0)$ when $\psi$ is principal. 

Thus far, we have shown
\[
|\cA_{\kd}| = g(\kd) X + r_{\kd}
\]
where 
\[
r_{\kd} = \frac{1}{h(\kq)} \sum_{\chi \pmod{\kq} } \bar{\chi}(\cC) \frac{1}{2\pi i} \int_{1/2+\delta-i\infty}^{1/2+\delta+i\infty} f(s,\chi) \Gamma(s) x ^s ds. 
\]
To bound the remainder, we factor $f(s,\chi)$ via \eqref{f-Factor} and apply the estimates \eqref{GrowthGd}, \eqref{GrowthG}, and  \eqref{Gamma_VerticalStrip}. This yields
\[
|r_{\kd}| \ll_{\delta}  \frac{x^{\tfrac{1}{2}+\delta} e^{O_{\delta}(n_K)}}{h(\kq) (\N\kd)^{1/2}} \sum_{\chi \pmod{\kq} } \int_{0}^{\infty} |G(\tfrac{1}{2} + \delta + it ,\chi) | e^{-|t|}dt
\]
so the desired result then follows from the convexity bound for Hecke $L$-functions (\cref{ConvexityBd}).  
\end{proof}

Motivated by the bounds on the remainder terms $r_{\kd}$ in \cref{AS-LocalDensities}, we define
\begin{equation}
Q_{\psi} = \begin{cases}  
(n_K^{n_K} d_K \N\kq)^{1/2} & \text{if $\psi$ is quadratic},  \\
(n_K^{n_K} d_K \N\kq)^{1/4} & \text{if $\psi$ is principal},
\end{cases}
\label{def:Q_psi}
\end{equation}
so more simply 
\[
|r_{\kd}| \ll \frac{x^{1/2+\delta}}{(\N\kd)^{1/2}}Q_{\psi}^{1+\delta} e^{O_{\delta}(n_K)}.
\]

\subsection{Sieve Dimension}
We prove our sieve problem is zero-dimensional. 


\begin{lem} \label{AS-PsiSmallPrimes}  For $\delta > 0$, 
\[
\sum_{\substack{ \N\kp < z \\ \psi(\kp) = 1 } } \frac{1}{\N\kp} \leq 1+ \delta
\]
provided $\eta \geq \eta(\delta)$ and $z \leq  (n_K^{n_K} d_K \N\kq)^{O_{\delta}(1)}$. 
\end{lem}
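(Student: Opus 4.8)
The plan is to relate the sum $\sum_{\N\kp<z,\,\psi(\kp)=1} \frac{1}{\N\kp}$ to the logarithmic derivative of $F_\psi(s) = L(s,\chi_0)L(s,\psi)$ near $s=1$, exploiting the existence of the exceptional zero $\beta$ to get cancellation. The key point is that when $\psi(\kp)=1$, the Euler factor of $F_\psi$ at $\kp$ contributes roughly $\frac{2}{\N\kp}$ to $-\frac{F_\psi'}{F_\psi}$, while when $\psi(\kp)=-1$ it contributes nothing to first order; so $\sum_{\psi(\kp)=1}\frac{1}{\N\kp}$ is essentially $\frac12 \sum_{\kp}\frac{1+\psi(\kp)}{\N\kp}$, which relates to $\log\zeta_K(s) + \log L(s,\psi)$ as $s\to 1^+$.

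First I would introduce a smooth cutoff (or a sharp one via Perron) at level $z$ and write, for $s = 1 + \frac{1}{\log z}$ say,
\[
\sum_{\substack{\N\kp<z\\\psi(\kp)=1}}\frac{1}{\N\kp} \le e\sum_{\substack{\kp\\\psi(\kp)=1}}\frac{1}{(\N\kp)^s} + O(1) \le \tfrac{e}{2}\sum_{\kp}\frac{1+\psi(\kp)}{(\N\kp)^s} + O(1),
\]
and then recognize $\sum_\kp \frac{1}{(\N\kp)^s} = \log\zeta_K(s) + O(1) = \log\frac{1}{s-1} + \log\kappa_K + O(1)$ and $\sum_\kp \frac{\psi(\kp)}{(\N\kp)^s} = \log L(s,\psi) + O(1)$ for $s$ near $1$, where the $O(1)$ absorbs the prime-power terms and the bounded contribution from $\N\kp$ large — here I would use \cref{PR-NaivePrimeSums}(i) to control tails uniformly in $n_K$. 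The crucial step is bounding $\log L(s,\psi)$ from above at $s = 1+\frac{1}{\log z}$: since $L(\beta,\psi)=0$ with $\beta = 1 - \frac{1}{\eta\log(n_K^{n_K}d_K\N\kq)}$ very close to $1$, and since $z \le (n_K^{n_K}d_K\N\kq)^{O_\delta(1)}$ forces $\frac{1}{\log z} \gg_\delta \frac{1}{\log(n_K^{n_K}d_K\N\kq)}$, the point $s$ lies at distance $\asymp \frac{1}{\log z}$ from $1$ while $\beta$ is much closer to $1$ when $\eta$ is large; standard zero-counting / Landau-type estimates (or a direct argument via $\frac{L'}{L}$ and the zero $\beta$, as in the proof of \cref{Lambda_InitialSum} combined with \cref{kappa_LB_zero}) then give $L(1+\tfrac{1}{\log z},\psi) \ll (1-\beta)\log z \cdot (\text{something}) $, i.e. $L(s,\psi)$ is small, so $\log L(s,\psi) \le -\log\log z + O_\delta(1)$ roughly, which cancels against $\log\frac{1}{s-1} = \log\log z$.

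Putting the pieces together, the main term $\frac{e}{2}\big(\log\frac{1}{s-1} + \log L(s,\psi)\big)$ should collapse to $\frac{e}{2}\cdot O_\delta(1)$ plus a genuinely negative contribution coming from how close $\beta$ is to $1$, and by taking $\eta \ge \eta(\delta)$ large enough (which pushes $\beta$ close enough to $1$ that $\log L(s,\psi)$ is sufficiently negative) the whole sum is forced below $1+\delta$. I expect the main obstacle to be making the bound on $\log L(1+\frac{1}{\log z},\psi)$ both effective and uniform in $K$: one must quantify how the zero $\beta$ suppresses $L$ at a nearby point without invoking a zero-free region of the wrong shape, and keep careful track of the dependence of all error terms on $n_K$, $d_K$, and $\N\kq$ so that the hypothesis $z \le (n_K^{n_K}d_K\N\kq)^{O_\delta(1)}$ is exactly what is needed; the cleanest route is probably to reuse \cref{Lambda_InitialSum} or its proof (Mellin inversion of $F_\psi(s+\beta)$ against $\Gamma(s)$) to get $\sum_{\kn}\frac{\lambda(\kn)}{(\N\kn)^\beta}e^{-\N\kn/y} \asymp \kappa_\psi\Gamma(1-\beta)$ and then compare with the divergent-looking partial sum over small primes, forcing $\sum_{\psi(\kp)=1}\frac{1}{\N\kp}$ to be small because $\kappa_\psi \gg 1-\beta = o(1)$ is tiny.
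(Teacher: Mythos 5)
Your diagnosis of the mechanism is correct (the zero $\beta$ forces $L(s,\psi)$ to be small near $s=1$, so primes with $\psi(\kp)=1$ are rare), and your closing suggestion --- reuse \cref{Lambda_InitialSum} and compare the partial sum over small primes with the full sum $\sum_\kn \lambda(\kn)(\N\kn)^{-\beta}e^{-\N\kn/y}$ --- is essentially the paper's actual proof: one uses $\lambda(\kn\kp)\geq\lambda(\kn)$ for $\psi(\kp)=1$ to get the pointwise inequality
\[
\Big(\sum_{\substack{\N\kp<z\\ \psi(\kp)=1}}\frac{1}{(\N\kp)^{\beta}}\Big)\Big(\sum_{\kn}\frac{\lambda(\kn)}{(\N\kn)^{\beta}}e^{-\N\kn/y}\Big)\ \leq\ \sum_{\kn}\frac{\lambda(\kn)}{(\N\kn)^{\beta}}e^{-\N\kn/yz},
\]
applies \cref{Lambda_InitialSum} to both the second and third sums (each is $\kappa_\psi\Gamma(1-\beta)\{1+O(\delta)+O((1-\beta)\log yz)\}$), and divides; the hypothesis $z\leq(n_K^{n_K}d_K\N\kq)^{O_\delta(1)}$ makes $(1-\beta)\log(yz)\ll_\delta \eta^{-1}$. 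No logarithms of $L$-functions are ever taken, so no additive constants are lost.

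The route you actually develop, however, has a genuine quantitative gap: it cannot produce the constant $1+\delta$, and that constant is not cosmetic. First, the very first step $\frac{1}{\N\kp}\leq e\,(\N\kp)^{-1-1/\log z}$ already costs a multiplicative factor $e$. Second, the ``$O(1)$'' you use to pass from $\sum_\kp(\N\kp)^{-s}$ to $\log\zeta_K(s)$ and $\log L(s,\psi)$ is really $O_\delta(n_K)$ (the prime-power and tail contributions are controlled only by \cref{PR-NaivePrimeSums}(i)), which is fatal for a bound uniform in $K$. Third, even the main terms do not collapse: $\log\zeta_K(s)+\log L(s,\psi)$ at $s=1+1/\log z$ is $\approx\log\big(\kappa_K\,|L'(\xi,\psi)|\big)$ for some $\xi$ near $1$, and neither factor is bounded by an absolute constant uniformly in $K$ and $\kq$ --- an upper bound on $|L'|$ comes only from convexity and contributes $\gg\delta\log(d_K\N\kq)+O(n_K)$ to the logarithm. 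Any of these losses is disqualifying here because the value $1+\delta$ feeds directly into \cref{AS-Dimension}, giving $C_\psi=e^{2+\delta}$ in the quadratic case, and the lower-bound sieve in \cref{T1-MainTheorem} survives only because $1-E_0(e^{2+\delta};\tau_y)\approx 0.29$ for $\tau_y>5$; replacing $e^{2+\delta}$ by $e^{2e+O(1)}$ (or anything involving $n_K$) makes $E_0$ enormous and the conclusion empty. So the approach as written would need to be abandoned in favour of the multiplicative comparison above, which is loss-free precisely because it bounds a ratio of two sums that are asymptotically equal.
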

\begin{proof} According to \cref{Lambda_InitialSum}, set
\[
y = C_{\delta} W_{\psi}^{1/2+\delta} (\N\kq)^{\delta} e^{M_{\delta}n_K}
\]
where $W_{\psi}$ is defined in \eqref{def:W_psi}.  Using $\lambda(\kn)$ defined in \eqref{def:lambda} and its properties described in \cref{LambdaAndRho}, one can verify that $\lambda(\kn) \leq \lambda(\kn\kp)$ for $\psi(\kp) = 1$ and $\kn \subseteq \cO$ and so
\begin{equation}
\Big(\sum_{\substack{ \N\kp < z \\ \psi(\kp) = 1} } \frac{1}{(\N\kp)^{\beta}}\Big)\Big( \sum_{\kn} \frac{\lambda(\kn)}{(\N\kn)^{\beta}}e^{-\N\kn/y} \Big) \leq \sum_{\kn} \frac{\lambda(\kn)}{(\N\kn)^{\beta}}e^{-\N\kn/yz} 
\label{AS-PsiSmallPrimes_Obsv}
\end{equation}
which we write as $S_1S_2 \leq S_3$, say. It suffices to show $S_1 \leq 1+\delta$. By our choice $y$, we may apply \cref{Lambda_InitialSum} to $S_2$ and $S_3$ deducing
\[
S_1 \leq 1 + \delta + O((1-\beta) \log(yz)).
\]
Since $yz \leq (n_K^{n_K} d_K\N\kq)^{O_{\delta}(1)}$ by our assumption on $z$ and choice of $y$, we conclude
\[
S_1 \leq  1 + \delta + O_{\delta}(\eta^{-1})
\]
whence the result follows after rescaling $\delta$. 
\end{proof}
\begin{cor} \label{AS-Dimension}
Let $g(\kd)$ be the multiplicative function defined in \cref{AS-LocalDensities} and $\delta > 0$ be arbitrary. Then, provided $\eta \geq \eta(\delta)$, 
\[
\frac{V(w)}{V(z)} = \prod_{w \leq \N\kp < z} \Big(1-g(\kp) \Big)^{-1}\leq C_{\psi} := 
\begin{cases} 
e^{2+\delta} & \text{if $\psi$ is quadratic}, \\
 e^{1+\delta} & \text{if $\psi$ is principal},
\end{cases}
\]
for all $2 \leq w \leq z \leq (n_K^{n_K} d_K \N\kq)^{O_{\delta}(1)}$. In particular, \eqref{SieveDimension} holds with $C = C_{\psi}$ and $\kappa = 0$. 
\end{cor}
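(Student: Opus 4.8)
The plan is to take the logarithm of the product and reduce everything to the prime sum controlled by \cref{AS-PsiSmallPrimes}, which is where the Siegel-zero hypothesis enters. First I would note that, by the remark following \cref{AS-LocalDensities}, $g(\kp) = 0$ whenever $\kp \notin \cP$, i.e. whenever $\psi(\kp) \neq 1$; hence the product $\prod_{w \leq \N\kp < z}(1-g(\kp))^{-1}$ is really a product over prime ideals $\kp$ with $\psi(\kp) = 1$ and $w \leq \N\kp < z$ (the spurious factors equal $1$).

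The key observation is that for these $\kp$ one has an exact identity rather than just an estimate. In the quadratic case $g(\kp) = \tfrac{2}{\N\kp+2}$, so $1 - g(\kp) = \tfrac{\N\kp}{\N\kp+2}$ and therefore
\[
-\log\bigl(1-g(\kp)\bigr) = \log\Bigl(1+\frac{2}{\N\kp}\Bigr) \leq \frac{2}{\N\kp},
\]
using $\log(1+t) \leq t$ for $t \geq 0$. Summing over the relevant primes and enlarging the range of summation,
\[
\log\frac{V(w)}{V(z)} = \sum_{\substack{w \leq \N\kp < z \\ \psi(\kp) = 1}} -\log\bigl(1-g(\kp)\bigr) \leq 2\sum_{\substack{\N\kp < z \\ \psi(\kp)=1}} \frac{1}{\N\kp} \leq 2(1+\delta'),
\]
the last step by \cref{AS-PsiSmallPrimes}, valid provided $\eta \geq \eta(\delta')$ and $z \leq (n_K^{n_K} d_K \N\kq)^{O_{\delta'}(1)}$. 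Exponentiating gives $V(w)/V(z) \leq e^{2+2\delta'}$, and choosing $\delta' = \delta/2$ (and relabelling $\eta(\delta) := \eta(\delta/2)$) yields the bound $e^{2+\delta}$ uniformly in $2 \leq w \leq z \leq (n_K^{n_K} d_K \N\kq)^{O_\delta(1)}$. The principal case is identical: $g(\kp) = \tfrac{1}{\N\kp+1}$ gives $-\log(1-g(\kp)) = \log(1+\tfrac{1}{\N\kp}) \leq \tfrac{1}{\N\kp}$, hence $\log\frac{V(w)}{V(z)} \leq 1+\delta$ after the same rescaling. Since the resulting bounds are independent of $w$ and $z$ in the stated range, this is precisely \eqref{SieveDimension} with $\kappa = 0$ and $C = C_\psi > 1$.

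There is no genuine obstacle here beyond invoking \cref{AS-PsiSmallPrimes}, which is the substantive input: the rest is the elementary telescoping identity above. The only points requiring minor care are bookkeeping the rescaling of $\delta$ and checking that the admissible range of $z$ matches the hypothesis of \cref{AS-PsiSmallPrimes} (it does, since both are of the shape $z \leq (n_K^{n_K} d_K \N\kq)^{O_\delta(1)}$).
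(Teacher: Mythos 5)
Your proof is correct and is exactly the argument the paper intends: the corollary is stated immediately after \cref{AS-PsiSmallPrimes} with no written proof precisely because it follows by taking logarithms, using the explicit formulas for $g(\kp)$ from \cref{AS-LocalDensities} together with $\log(1+t)\leq t$, and invoking the prime sum bound. The bookkeeping (vanishing of $g(\kp)$ off $\cP$, the factor of $2$ versus $1$, and the rescaling of $\delta$ in the quadratic case) is all handled properly.
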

\subsection{Small Prime Ideal Factors}

With the local densities and dimension computed, we may now apply the ``Fundamental Lemma" and sieve out small primes.  Before doing so, we restrict the choice of sieve parameters for the remainder of the section. For $\delta > 0$, suppose
\begin{equation}
\{\kappa_{\psi}^{-1}+1\}^{1+\delta}  \cdot W_{\psi}^{1/2+\delta} (\N\kq)^{\delta} e^{M_{\delta}n_K} \leq z \leq (n_K^{n_K} d_K\N\kq)^{O_{\delta}(1)}
\label{AS-SieveParameters_z}
\end{equation}
for some sufficiently large constant $M_{\delta}$ and define
\begin{equation}
D = \frac{x^{1-4\delta}}{h(\kq)^2 Q_{\psi}^{2+2\delta}}, \qquad \tau = \frac{\log D}{\log z},
\label{AS-SieveParameters}
\end{equation}
where  $\kappa_{\psi}, W_{\psi}, Q_{\psi}$ are defined in \eqref{def:kappa_psi},  \eqref{def:W_psi}, and \eqref{def:Q_psi} respectively. 

\begin{prop}
\label{AS-SmallPrimes} 
For $\delta > 0$, suppose the sifting level $z$ satisfies \eqref{AS-SieveParameters_z} and define the level of distribution $D$ and sifting variable $\tau$ as in \eqref{AS-SieveParameters}. Assume $\eta \geq \eta(\delta)$ and
\begin{equation}
x \geq  \big\{ \kappa_{\psi}^{-1} W_{\psi}^{1/4} Q_{\psi} \cdot h(\kq) \big\}^{2+20\delta} e^{O_{\delta}(n_K)}.
\label{AS-SmallPrimes_xRange}
\end{equation}
Then
\begin{equation}
\begin{aligned}
S(\cA, z) & \leq XV(z)\Big\{1 + E_1(C_{\psi}; \tau) + O_{\delta}\big( \frac{1}{\log x} \big) \Big\}, \\
S(\cA, z) & \geq XV(z)\Big\{ 1 - E_0(C_{\psi}; \tau) + O_{\delta}\big( \frac{1}{\log x} \big) \Big\},
\end{aligned}
\label{AS-S(A,z)-EQ}
\end{equation}
where $E_0$ and $E_1$ are defined in \cref{FundamentalLemma}, and $C_{\psi}$ is defined in \cref{AS-Dimension}.
\end{prop}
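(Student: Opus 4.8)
The proposition follows from the Fundamental Lemma (\cref{FundamentalLemma}) once its hypotheses are verified and the remainder terms are estimated. By \cref{AS-Dimension}, the dimension condition \eqref{SieveDimension} holds with $\kappa = 0$ and $C = C_\psi$ for all $2 \le w \le z$, since \eqref{AS-SieveParameters_z} forces $z \le (n_K^{n_K} d_K \N\kq)^{O_\delta(1)}$; and $D \ge 1$ because \eqref{AS-SmallPrimes_xRange} controls $x$ from below while $D$ is defined by \eqref{AS-SieveParameters}. So I would apply \cref{FundamentalLemma} to $\cA$, the primes $\cP$, and the sifting level $z$, obtaining
\[
S(\cA,z) \le XV(z)\bigl\{1 + E_1(C_\psi;\tau)\bigr\} + R^{+}(D,z), \qquad S(\cA,z) \ge XV(z)\bigl\{1 - E_0(C_\psi;\tau)\bigr\} + R^{-}(D,z),
\]
with $\tau = \tfrac{\log D}{\log z}$, $|\lambda^{\pm}_{\kd}| \le 1$, $R^{\pm}(D,z) = \sum_{\kd \mid \kP(z),\ \N\kd < D}\lambda^{\pm}_{\kd} r_{\kd}$, and $X, r_{\kd}$ as in \cref{AS-LocalDensities}. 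Comparing with the target \eqref{AS-S(A,z)-EQ}, everything reduces to the estimate $|R^{\pm}(D,z)| \ll_\delta XV(z)/\log x$.

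To make this comparison I would first bound $XV(z)$ from below. No prime ideal has norm $< 2$, so $V(2) = 1$; together with $V(2)/V(z) \le C_\psi$ (\cref{AS-Dimension}) this gives $V(z) \ge C_\psi^{-1} \gg_\delta 1$, while bounding the convergent Euler products that define $b_\psi$ via \cref{PR-NaivePrimeSums}(i) gives $b_\psi \gg e^{-O(n_K)}$; hence $XV(z) \gg_\delta \kappa_\psi\, x\, e^{-O(n_K)}/h(\kq)$. Next, the remainder: fixing $\epsilon \in (0,\delta)$ and invoking \cref{AS-LocalDensities},
\[
|R^{\pm}(D,z)| \le \sum_{\substack{\kd \mid \kP(z) \\ \N\kd < D}} |r_{\kd}| \ll_\epsilon x^{1/2+\epsilon} Q_\psi^{1+\epsilon} e^{O_\epsilon(n_K)} \sum_{\substack{\kd \mid \kP(z) \\ \N\kd < D}} (\N\kd)^{-1/2}.
\]
Since $(\N\kd)^{-1/2} \le D^{1/2}(\N\kd)^{-1}$ for $\N\kd < D$, and the ideals $\kd \mid \kP(z)$ are squarefree with prime factors in $\cP$ of norm $< z$, \cref{AS-PsiSmallPrimes} (applicable because $z \le (n_K^{n_K} d_K \N\kq)^{O_\delta(1)}$ and $\eta \ge \eta(\delta)$) bounds the inner sum by $D^{1/2}\prod_{\kp \in \cP,\ \N\kp < z}(1 + \N\kp^{-1}) \le D^{1/2}\exp\bigl(\sum_{\kp \in \cP,\ \N\kp < z}\N\kp^{-1}\bigr) \ll_\delta D^{1/2}$. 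Substituting $D^{1/2} = x^{1/2-2\delta}/(h(\kq)Q_\psi^{1+\delta})$ from \eqref{AS-SieveParameters} cancels the powers of $Q_\psi$ and leaves $|R^{\pm}(D,z)| \ll_{\delta,\epsilon} x^{1-2\delta+\epsilon} Q_\psi^{-(\delta-\epsilon)} e^{O_\epsilon(n_K)}/h(\kq)$.

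Combining the two bounds,
\[
\frac{|R^{\pm}(D,z)|}{XV(z)} \ll_{\delta,\epsilon} \kappa_\psi^{-1}\, x^{-(2\delta-\epsilon)} Q_\psi^{-(\delta-\epsilon)} e^{O_{\delta,\epsilon}(n_K)},
\]
so it remains to see the right-hand side is $\ll_\delta 1/\log x$. This is exactly what the lower bound \eqref{AS-SmallPrimes_xRange} provides: the exponent $2+20\delta$, the factor $W_\psi^{1/4}$, and the factor $e^{O_\delta(n_K)}$ are chosen so that the small power $x^{-(2\delta-\epsilon)}$ swallows $\kappa_\psi^{-1} Q_\psi^{-(\delta-\epsilon)} e^{O(n_K)}\log x$, using in addition the lower bounds $\kappa_\psi \gg 1-\beta$ (\cref{kappa_LB_zero}) and Stark's $\kappa_\psi^{-1} \ll n_K^{2n_K} d_K^{1/n_K}(\N\kq)^{1/2n_K}$ (\cref{kappa_LB_effective}), taking $\epsilon$ small in terms of $\delta$ and the constants $\eta(\delta)$, $M_\delta$ large enough. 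Once $|R^{\pm}(D,z)| \ll_\delta XV(z)/\log x$ is established, substituting into the two inequalities of the first paragraph gives \eqref{AS-S(A,z)-EQ}. The delicate part — which I expect to be the main obstacle — is precisely this last accounting: carrying the factor $\kappa_\psi^{-1}$ (equivalently, absorbing the possibility that $L(1,\psi)$ or $\kappa_K$ is small) against the exact shape of the admissible range for $x$ and the $4\delta$-slack built into $D$, uniformly in $n_K$, $d_K$ and $\N\kq$. Everything else is a direct application of \cref{FundamentalLemma} together with the local-density (\cref{AS-LocalDensities}) and zero-dimensionality (\cref{AS-PsiSmallPrimes}) estimates already in hand.
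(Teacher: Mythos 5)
Your setup — applying \cref{FundamentalLemma} with the inputs of \cref{AS-LocalDensities,AS-Dimension}, and reducing everything to $|R^{\pm}(D,z)| \ll_{\delta} XV(z)/\log x$ together with the lower bound $XV(z) \gg \kappa_{\psi}\, x\, e^{-O(n_K)}/h(\kq)$ — matches the paper. The gap is in your estimate of the remainder sum. You bound $\sum_{\kd \mid \kP(z),\, \N\kd < D}(\N\kd)^{-1/2} \le D^{1/2}\prod_{\kp \mid \kP(z)}(1+\N\kp^{-1}) \ll_{\delta} D^{1/2}$ via \cref{AS-PsiSmallPrimes}. That step is correct, but it loses a factor of $\kappa_{\psi}$ that the argument cannot afford: since $XV(z)$ itself carries a factor $\kappa_{\psi}$, you are left needing $\kappa_{\psi}^{-1} e^{O(n_K)}\log x \ll x^{2\delta-\epsilon}$. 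The hypothesis \eqref{AS-SmallPrimes_xRange} only yields $\kappa_{\psi}^{-1} \le x^{1/(2+20\delta)}$, and no available lower bound on $\kappa_{\psi}$ (Stark's \cref{kappa_LB_effective}, or $\kappa_{\psi}\gg 1-\beta$ from \cref{kappa_LB_zero} with $1-\beta = 1/(\eta\log(n_K^{n_K}d_K\N\kq))$ and $\eta$ unbounded) improves this to $\kappa_{\psi}^{-1}\ll x^{\delta}$ for small $\delta$. In the Siegel-zero regime $\kappa_{\psi}$ is precisely the quantity that may be tiny, so your remainder bound can exceed $XV(z)$ by a power of $x$; the exponent $2+20\delta$ is not "chosen so that $x^{-(2\delta-\epsilon)}$ swallows $\kappa_{\psi}^{-1}$," and the final absorption step fails.

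The paper avoids this by extracting the factor $\kappa_{\psi}$ from the remainder sum itself: it bounds $\mathbf{1}\{\kd\in\cD\}\le\lambda(\kd)$, smooths the cutoff to $\sum_{\kd}\lambda(\kd)(\N\kd)^{-1/2}e^{-\N\kd/D}$, and evaluates this by Mellin inversion of $F_{\psi}(s+\tfrac12)\Gamma(s)D^{s}$, shifting to $\Re\{s\}=\delta$ to pick up the residue $\kappa_{\psi}\Gamma(\tfrac12)D^{1/2}$ plus a secondary term of size $W_{\psi}^{1/4+\delta}(\N\kq)^{\delta}e^{O_{\delta}(n_K)}D^{\delta}$. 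Condition \eqref{AS-SmallPrimes_xRange} is exactly what makes the secondary term dominated by $\kappa_{\psi}D^{1/2}$, after which $R^{\pm}\ll_{\delta}\kappa_{\psi}x^{1-\delta}e^{O_{\delta}(n_K)}/h(\kq)$ and the $\kappa_{\psi}$'s cancel against $XV(z)$, giving $R^{\pm}/XV(z)\ll x^{-\delta/2}\ll 1/\log x$. In short: the sparsity you need is not that of the primes with $\psi(\kp)=1$ below $z$ (which only gives the $O(1)$ of zero-dimensionality), but that of the full set $\cD$ on average, which is quantified by $\kappa_{\psi}=\Res_{s=1}F_{\psi}(s)$. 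Replacing your trivial bound by this Mellin-inversion step repairs the proof.
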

\begin{proof} We only prove the lower bound; the upper bound follows similarly. With the described choice of parameters, we employ the Fundamental Lemma for zero-dimensional sieves (\cref{FundamentalLemma}) in conjunction with \cref{AS-LocalDensities,AS-Dimension}, yielding 
\begin{equation}
\label{AS-SmallPrimes-Step1}
S(\cA,z) \geq XV(z) \Big\{1 - E_0(C_{\psi}; \tau)  \Big\} + R^-(\cA,D).
\end{equation}
Since the sequence $\cA = \{a_{\kn}\}_{\kn}$ is only supported on the set $\cD$ (defined in \cref{subsec:SieveSequence}), 
\[
R^-(\cA,D) \ll \sum_{\substack{\N\kd < D \\ \kd \in \cD}} |r_{\kd}|. 
\]
From \cref{LambdaAndRho}, it follows $\mathbf{1}\{\kd \in \cD\} \leq \lambda(\kd)$ so by \cref{AS-LocalDensities},
\begin{equation}
\begin{aligned}
R^-(\cA,D) & \ll   x^{1/2+\delta} Q_{\psi}^{1+\delta} e^{O_{\delta}(n_K)}\sum_{\substack{\N\kd < D}} \lambda(\kd) (\N\kd)^{-1/2} \\
& \ll   x^{1/2+\delta} Q_{\psi}^{1+\delta} e^{O_{\delta}(n_K)}\sum_{\kd} \lambda(\kd) (\N\kd)^{-1/2} e^{-\N\kd/D}.
\end{aligned}
\label{AS-SmallPrimes_Remainder}
\end{equation}
By Mellin inversion, the sum over $\kd$ equals
\[
\frac{1}{2\pi i} \int_{1-i\infty}^{1+i\infty} F_{\psi}(s+\tfrac{1}{2}) \Gamma(s) D^s ds. 
\]
Pulling the contour to $\Re\{s\} = \delta$, we pick up a main term of $\kappa_{\psi} \Gamma(1/2) D^{1/2}$ and bound the resulting integral using \cref{F_psi-Lemma} and  \eqref{Gamma_VerticalStrip}. Applying these estimates in \eqref{AS-SmallPrimes_Remainder}, we find
\[
R^{-}(\cA,D) \ll_{\delta} x^{1/2+\delta} Q_{\psi}^{1+\delta} e^{O_{\delta}(n_K)}\Big(\kappa_{\psi} D^{1/2} + W_{\psi}^{1/4+\delta}  (\N\kq)^{\delta}  e^{O_{\delta}(n_K)} D^{\delta} \Big) 
\]
By \eqref{AS-SmallPrimes_xRange}, the first term in the parentheses dominates whence
\[
R^{-}(\cA,D) \ll_{\delta} \kappa_{\psi} Q_{\psi}^{1+\delta}   x^{1/2+\delta} D^{1/2} e^{O_{\delta}(n_K)} \ll_{\delta} \frac{\kappa_{\psi} x^{1-\delta}}{h(\kq)}e^{O_{\delta}(n_K)}. 
\]
Since $z$ satisfies the upper bound in \eqref{AS-SieveParameters_z},  it follows from \cref{AS-Dimension} and the definition of $X$ in \cref{AS-LocalDensities} that
\[
XV(z) \gg \frac{\kappa_{\psi} x}{h(\kq)} \cdot \frac{1}{e^{O_{\delta}(n_K)}}
\]
for $\eta \geq \eta(\delta)$ so by these two observations, we conclude
\[
R^{-}(\cA,D) \ll_{\delta} XV(z) x^{-\delta/2} \ll_{\delta} \frac{XV(z)}{\log x}
\]
provided $x \geq e^{O_{\delta}(n_K)}$.  This latter condition on $x$ is clearly implied by assumption \eqref{AS-SmallPrimes_xRange}. Substituting this estimate into \eqref{AS-SmallPrimes-Step1} yields the desired result. 
\end{proof}
\subsection{Large Prime Ideal Factors}

\begin{lem}
\label{AS-LargePrimes}  Suppose $\kp \in \cP$ satisfies $z \leq \N\kp < x^{1/2}$ and assume $z$ satisfies \eqref{AS-SieveParameters_z}. Then for $\delta > 0$,
\[
S(\cA_{\kp}, \kp) \ll_{\delta} \frac{XV(z)}{\N\kp} 
\]
provided $\eta \geq \eta(\delta)$ and
\begin{equation}
x \geq  \big\{ \kappa_{\psi}^{-1} W_{\psi}^{1/4} Q_{\psi} \cdot h(\kq) \big\}^{4+50\delta}  e^{O_{\delta}(n_K)}.
\label{AS-LargePrimes_xRange}
\end{equation}
\end{lem}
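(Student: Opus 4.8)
The plan is to bound $S(\cA_{\kp},\kp)$ by discarding the coprimality condition to the prime ideals below $\N\kp$, thereby reducing to an unsifted count of the sequence $\cA_{\kp}$, and then to estimate that count by the same Mellin-transform technique used in \cref{AS-LocalDensities}. Concretely, since $S(\cA_{\kp},\kp) = \sum_{(\kn,\kP(\kp))=1} a_{\kp\kn'}$ with $\kn = \kp\kn'$ (where we absorb the prime $\kp$), the trivial bound $S(\cA_{\kp},\kp) \leq |\cA_{\kp}| = \sum_{\kp \mid \kn} a_{\kn}$ holds. But this alone loses too much; instead I would keep a partial sieving. The cleaner route, mimicking \cite[Chapter 24]{Opera}, is: first note $S(\cA_{\kp},\kp) \leq S(\cA_{\kp}, w)$ for any $w \leq \N\kp$, then pick $w$ comparable to a small power of $x$ and apply the Fundamental Lemma (\cref{FundamentalLemma}) to the subsequence $\cA_{\kp}$, whose density function is still $g$ (shifted by the Euler factor at $\kp$) and whose main term is $g(\kp) X V(w) \cdot (1-g(\kp))^{-1}$ up to the bounded dimension constant $C_\psi$. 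Since $g(\kp) \asymp 1/\N\kp$ by \cref{AS-LocalDensities} and $V(w) \asymp V(z)$ by \cref{AS-Dimension} (as long as $w$ stays in the admissible range $w \le (n_K^{n_K} d_K \N\kq)^{O_\delta(1)}$), this produces the claimed main term $XV(z)/\N\kp$; the error term is the Fundamental Lemma remainder $R^{\pm}(D',w)$ for $\cA_{\kp}$.

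The key steps, in order: (1) Identify the density function and main term for $\cA_{\kp}$ — writing $\cA_{\kp\kd} $ for $\kd$ coprime to $\kp$, one has $|\cA_{\kp\kd}| = g(\kp)g(\kd) X + r_{\kp\kd}$, so $\cA_{\kp}$ is a legitimate sieve sequence with the same density $g$ and new "$X$" equal to $g(\kp)X$. (2) Check that \eqref{SieveDimension} still holds with $\kappa=0$ and constant $C_\psi$ for $\cA_{\kp}$, which is immediate since the $V(w)/V(w')$ ratios are unchanged. (3) Apply \cref{FundamentalLemma} at a sifting level $w$ and level of distribution $D'$ chosen so that $\tau' = \log D'/\log w$ is bounded away from small values (so that $E_0, E_1 = O(1)$) — e.g.\ $w = D^{1/\tau_0}$ for a fixed $\tau_0$, or simply reuse $z$ and $D$ as in \eqref{AS-SieveParameters}. (4) Bound the remainder $R^{\pm}(D', w)$ for $\cA_{\kp}$ by $\sum_{\N\kd < D', \kd \in \cD}|r_{\kp\kd}| \ll x^{1/2+\delta}Q_\psi^{1+\delta}e^{O_\delta(n_K)} \sum_{\kd}\lambda(\kd)(\N(\kp\kd))^{-1/2}e^{-\N\kd/D'}$, which by the Mellin argument from \cref{AS-SmallPrimes} is $\ll_\delta \kappa_\psi (\N\kp)^{-1/2} Q_\psi^{1+\delta} x^{1/2+\delta}(D')^{1/2}e^{O_\delta(n_K)}$, plus the subdominant pole-free piece. (5) Compare with the main term $g(\kp)XV(w) \gg \kappa_\psi x/(\N\kp\, h(\kq)\, e^{O_\delta(n_K)})$: the ratio of remainder to main term is $\ll_\delta (\N\kp)^{1/2}\,h(\kq)\,Q_\psi^{1+\delta}\,x^{\delta-1/2}(D')^{1/2}e^{O_\delta(n_K)}$, and using $\N\kp < x^{1/2}$ together with $D' \ll x/(h(\kq)^2 Q_\psi^2)$ this is $\ll x^{-c\delta}$ provided $x$ exceeds the threshold in \eqref{AS-LargePrimes_xRange}; here the extra factor $(\N\kp)^{1/2} \le x^{1/4}$ coming from the large prime is exactly what forces the exponent to roughly double from $2+20\delta$ in \eqref{AS-SmallPrimes_xRange} to $4+50\delta$.

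The main obstacle is Step (4)–(5): one must track the dependence on $\N\kp$ carefully through the remainder sum. The factor $(\N\kp)^{-1/2}$ gained from the remainder bound $|r_{\kp\kd}| \ll x^{1/2+\delta}(\N(\kp\kd))^{-1/2}Q_\psi^{1+\delta}e^{O_\delta(n_K)}$ is only a square-root saving, whereas the main term carries a full factor $1/\N\kp$; the resulting deficit of $(\N\kp)^{1/2} \le x^{1/4}$ is absorbed only by shrinking $D'$ (equivalently demanding larger $x$), which is why the hypothesis \eqref{AS-LargePrimes_xRange} is quantitatively stronger than \eqref{AS-SmallPrimes_xRange}. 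A secondary subtlety is ensuring the auxiliary sifting level $w$ (or the reuse of $z$) stays within the range $(n_K^{n_K}d_K\N\kq)^{O_\delta(1)}$ demanded by \cref{AS-Dimension} while $D'$ remains large enough that $\tau'$ is bounded; this is where \eqref{AS-SieveParameters_z} and the lower bound $\N\kp \ge z$ are used. Everything else is a routine repetition of the contour shift and convexity estimates already established for \cref{AS-LocalDensities} and \cref{AS-SmallPrimes}.
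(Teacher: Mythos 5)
Your overall architecture is the same as the paper's (reduce to $S(\cA_{\kp},w)$ for a smaller sifting level, apply \cref{FundamentalLemma} to the subsequence $\cA_{\kp}$ whose main term is $g(\kp)X$, and bound the new remainder by the Mellin argument of \cref{AS-SmallPrimes}), but the quantitative accounting in steps (3)--(5) has a genuine flaw. The option you offer of ``simply reusing $D$'' as the level of distribution does not work, and the computation in step (5) is false as written: with $D' \ll x/(h(\kq)^2 Q_{\psi}^{2})$ the ratio of remainder to main term is
$(\N\kp)^{1/2}\, h(\kq)\, Q_{\psi}^{1+\delta}\, x^{\delta-1/2} (D')^{1/2} e^{O_{\delta}(n_K)} \ll (\N\kp)^{1/2} x^{\delta} Q_\psi^\delta e^{O_\delta(n_K)}$, which can be as large as $x^{1/4+\delta}$. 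This deficit is a \emph{positive power of $x$}, so no lower bound on $x$ of the form \eqref{AS-LargePrimes_xRange} can absorb it --- enlarging $x$ makes it worse, not better. The parenthetical ``shrinking $D'$ (equivalently demanding larger $x$)'' conflates two inequivalent fixes; the only one that works is the first. One must take $D' = D/\N\kp$ (equivalently, restrict to $\N(\kp\kd) < D$ so that the total level of support stays at $D$), whereupon $(D')^{1/2} = D^{1/2}(\N\kp)^{-1/2}$ supplies the second half-power of $\N\kp$, the remainder becomes $\ll_{\delta} (\N\kp)^{-1}\kappa_{\psi} x^{1/2+\delta} Q_{\psi}^{1+\delta} D^{1/2} e^{O_{\delta}(n_K)}$, and the comparison with $g(\kp)XV(z) \gg \kappa_\psi x (\N\kp)^{-1} h(\kq)^{-1} e^{-O_\delta(n_K)}$ closes with a clean factor $x^{-\delta}$, with no help needed from \eqref{AS-LargePrimes_xRange} at this stage.

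Consequently your explanation of why the exponent doubles from $2+20\delta$ to $4+50\delta$ is misattributed. The hypothesis \eqref{AS-LargePrimes_xRange} is consumed in the step you dismiss as ``the subdominant pole-free piece'': after the contour shift, the Mellin evaluation of $\sum_{\kd}\lambda(\kd)(\N\kd)^{-1/2}e^{-\N\kd\N\kp/D}$ produces the polar term $\kappa_{\psi}(D/\N\kp)^{1/2}$ \emph{plus} a term of size $W_{\psi}^{1/4+\delta}(\N\kq)^{\delta}e^{O_{\delta}(n_K)}(D/\N\kp)^{\delta}$, and one must check that the former dominates. Since $\N\kp$ can be as large as $x^{1/2}$, the effective level $D/\N\kp$ can be as small as roughly $x^{1/2-4\delta}/(h(\kq)^2 Q_{\psi}^{2+2\delta})$, and dominance of the pole forces $x^{1/4} \gg \kappa_{\psi}^{-1} W_{\psi}^{1/4} Q_{\psi} h(\kq)\cdots$, i.e.\ exactly the fourth-power threshold in \eqref{AS-LargePrimes_xRange}. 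Without this verification the lemma's hypothesis is never actually used and the bound on the remainder is unjustified for small $\kappa_\psi$.
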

\begin{proof} From \cref{sec:Sieve_NT}, recall $S(\cA_{\kp}, \kp) \leq S(\cA_{\kp}, \N\kp)$ so it suffices to bound the latter. Using \cref{AS-LocalDensities,AS-Dimension}, we apply the upper bound sieve from \cref{FundamentalLemma} to the sequence $\cA_{\kp}$ with level of distribution $D' = D/\N\kp$, sifting level $z' = \N\kp$, and sifting variable $\tau' = \frac{\log D'}{\log z'}$.  This application therefore yields
\[
S(\cA_{\kp}, \N\kp) \ll g(\kp) X V(z) + \sum_{\substack{ \kd \mid \kP(z') \\ \N\kd < D'} } |r_{\kp\kd}|
\]
since $V(\N\kp) \leq V(z)$ for $\N\kp \geq z$. 
As $g(\kp) \ll (\N\kp)^{-1}$ by \cref{AS-LocalDensities}, it suffices to bound the remainder sum. Following the same argument as in \cref{AS-SmallPrimes}, we see
\begin{align*}
\sum_{\substack{ \kd \mid \kP(z') \\ \N\kd < D'} } |r_{\kp\kd}|
& \ll_{\delta} \frac{x^{1/2+\delta}}{(\N\kp)^{1/2}} Q_{\psi}^{1+\delta} e^{O_{\delta}(n_K)}\sum_{\kd} \frac{\lambda(\kd)}{(\N\kd)^{1/2}} e^{-\N\kd\kp/D} \\
& \ll_{\delta} \frac{x^{1/2+\delta}}{(\N\kp)^{1/2}} Q_{\psi}^{1+\delta} e^{O_{\delta}(n_K)} \Big(\kappa_{\psi} \big(\frac{D}{\N\kp}\big)^{1/2} + W_{\psi}^{1/4+\delta}  (\N\kq)^{\delta}  e^{O_{\delta}(n_K)} \big(\frac{D}{\N\kp}\big)^{\delta} \Big)  \\
& \ll_{\delta} \frac{1}{\N\kp} \cdot \kappa_{\psi} x^{1/2+\delta} Q_{\psi}^{1+\delta} D^{1/2}e^{O_{\delta}(n_K)}
\end{align*}
provided \eqref{AS-LargePrimes_xRange} holds. One can similarly show that the above is $\ll_{\delta} XV(z) (\N\kp)^{-1}$ since $z$ satisfies the upper bound in \eqref{AS-SieveParameters_z} and $\eta \geq \eta(\delta)$.
\end{proof}

\begin{lem} \label{AS-PsiLargePrimes}  Let $\delta > 0$ and assume $z$ satisfies \eqref{AS-SieveParameters_z}. For $x > 2z$, 
\[
\sum_{\substack{ z \leq \N\kp < x \\ \psi(\kp) = 1} } \frac{1}{\N\kp} \ll_{\delta}  (1-\beta) \log x
\]
provided $\eta \geq \eta(\delta)$. 
\end{lem}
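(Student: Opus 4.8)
The plan is to mirror the argument of \cref{AS-PsiSmallPrimes} but applied at the upper end of the prime ideal range. Recall that in \cref{LambdaAndRho} we have $\lambda(\kn) \le \lambda(\kn\kp)$ whenever $\psi(\kp) = 1$, so for any $y_1 \ge 1$ with $x > 2z$ we obtain the analogue of \eqref{AS-PsiSmallPrimes_Obsv}, namely
\[
\Big( \sum_{\substack{ z \le \N\kp < x \\ \psi(\kp) = 1}} \frac{1}{\N\kp} \Big) \Big( \sum_{\kn} \frac{\lambda(\kn)}{\N\kn} e^{-\N\kn/y_1} \Big) \le \sum_{\kn} \frac{\lambda(\kn)}{\N\kn}\big( e^{-\N\kn/(y_1 x)} - e^{-\N\kn/(y_1 z/x)} \big) + (\text{lower-order corrections}),
\]
where the differencing is designed so that only prime ideals $\kp$ with $z \le \N\kp < x$ survive on the right. (More precisely I would sieve the sum $\sum_{\kn} \lambda(\kn)(\N\kn)^{-1}(e^{-\N\kn/(y_1 x)} - e^{-\N\kn/y_1})$ from below by isolating a single prime factor $\kp$ with $z \le \N\kp < x$; the factorization of $\lambda$ over Euler products gives the bound in the displayed form with $y_2 = y_1 x$.) Write the left side as $S_1' \cdot S_2'$ and the right side as $S_3'$.

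The next step is to estimate the three sums. For $S_2'$ I would take $y_1$ at the lower threshold permitted by \cref{Lambda_InitialSum}, i.e.\ $y_1 \asymp_{\delta} W_{\psi}^{1/2+\delta}(\N\kq)^{\delta} e^{M_{\delta} n_K}$, except with the extra $\N\kn$ in the denominator rather than $(\N\kn)^{\beta}$; since $\eta \ge 20$ forces $\beta$ very close to $1$, the two are interchangeable up to the stated error, or one can instead invoke \cref{Lambda_ShortSum} directly. The key point is that \cref{Lambda_ShortSum} evaluates a short sum of exactly the type $\sum_{\kn} \lambda(\kn)(\N\kn)^{-1}(e^{-\N\kn/y_2} - e^{-\N\kn/y_1})$ as $\ll \kappa_{\psi} \log(y_2/y_1)$, and with $y_2/y_1 = x$ this gives $S_3' \ll \kappa_{\psi} \log x$. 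Meanwhile $S_2' \gg \kappa_{\psi}$ by the main term in \cref{Lambda_InitialSum} (the $\Gamma(1-\beta) \asymp 1/(1-\beta)$ factor and $(1-\beta)\log y_1 = O_{\delta}(1)$ combine to give $S_2' \gg \kappa_{\psi}/(1-\beta)$, which is even better). Dividing, $S_1' \ll \kappa_{\psi}\log x \big/ (\kappa_{\psi}/(1-\beta)) = (1-\beta)\log x$, which is exactly the claimed bound.

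The main obstacle — really the only place requiring care — is arranging the differencing of exponentials so that the right-hand side genuinely restricts to $z \le \N\kp < x$ while remaining expressible via $F_{\psi}(s)$-type Mellin sums that \cref{Lambda_ShortSum} can handle, and simultaneously checking that the hypothesis $y_1 \ge \kappa_{\psi}^{-1-\delta} W_{\psi}^{1/2+\delta}(\N\kq)^{\delta} e^{M_{\delta} n_K}$ of \cref{Lambda_ShortSum} is met. Since $z$ already satisfies \eqref{AS-SieveParameters_z}, which includes the factor $\{\kappa_{\psi}^{-1}+1\}^{1+\delta} W_{\psi}^{1/2+\delta}(\N\kq)^{\delta} e^{M_{\delta} n_K}$, taking $y_1$ comparable to $z$ (or a fixed power of the relevant conductor quantities) makes this automatic, and the constraint $z \le (n_K^{n_K} d_K \N\kq)^{O_{\delta}(1)}$ keeps all logarithms of size $O_{\delta}(\log(n_K^{n_K} d_K \N\kq)) = O_{\delta}(\eta (1-\beta)^{-1})$ under control. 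After rescaling $\delta$, the result follows.
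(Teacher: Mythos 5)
Your proposal follows the paper's proof essentially verbatim: multiply the prime sum by an auxiliary sum $\sum_{\kn} \lambda(\kn)(\N\kn)^{-1}e^{-\N\kn/y_1}$ with $y_1\asymp z$, dominate the product by the short differenced sum $\sum_{\kn} \lambda(\kn)(\N\kn)^{-1}\big(e^{-\N\kn/(xy_1)} - e^{-\N\kn/y_1}\big)$ via multiplicativity of $\lambda$, then bound the short sum by \cref{Lambda_ShortSum} and lower-bound the auxiliary sum by \cref{Lambda_InitialSum}, whose $\Gamma(1-\beta)\asymp (1-\beta)^{-1}$ main term (transferred from the weight $(\N\kn)^{-\beta}$ to $(\N\kn)^{-1}$, exactly the interchange you flag and which the paper carries out with the function $H_{1-\beta}$) supplies the factor $1-\beta$ after dividing. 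This is the same argument as the paper's; the only blemish is the second exponential $e^{-\N\kn/(y_1 z/x)}$ in your display, a slip for $e^{-\N\kn/y_1}$ that your own parenthetical already corrects.
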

\begin{proof} From \cref{LambdaAndRho} and the condition $x > 2z$, notice
\[
\sum_{\substack{ z \leq \N\kp < x \\ \psi(\kp) = 1} } \frac{1}{\N\kp} \ll \sum_{z \leq \N\kp < x} \frac{\lambda(\kp)}{\N\kp} \big\{ e^{-\N\kp/x} - e^{-\N\kp/z} \big\} = S_1,
\]
say, so we estimate $S_1$. Observe that
\[
S_2 := \sum_{\kn} \frac{\lambda(\kn)}{(\N\kn)^{\beta}} e^{-2\N\kn/z} = \sum_{\kn} \frac{\lambda(\kn)}{\N\kn} e^{-\N\kn/z} \cdot H_{1-\beta}\Big( \frac{\N\kn}{z} \Big) z^{1-\beta}
\]
where $H_{\epsilon}(t) = t^{\epsilon} e^{-t}$ for $\epsilon > 0$ and $t > 0$. By calculus, $H_{\epsilon}(t)$ is maximized at $t=\epsilon$ and $H_{\epsilon}(\epsilon) \rightarrow 1$ as $\epsilon \rightarrow 0^+$. Moreover, $z^{1-\beta} = 1 + O((1-\beta) \log z)$. Therefore, by \eqref{AS-SieveParameters_z},
\[
S_2 \ll_{\delta} \sum_{\kn} \frac{\lambda(\kn)}{\N\kn} e^{-\N\kn/z}
\]
for $\eta \geq \eta(\delta)$. Hence, using \cref{LambdaAndRho}, we see
\[
\begin{aligned}
S_1S_2 \ll & \Big( \sum_{z \leq \N\kp < x}  \frac{\lambda(\kp)}{\N\kp} \big\{ e^{-\N\kp/x} - e^{-\N\kp/z} \big\} \Big)
\Big( \sum_{\kn} \frac{\lambda(\kn)}{\N\kn} e^{-\N\kn/z} \Big) \\
& \qquad \qquad \ll \sum_{\kn} \frac{\lambda(\kn)}{\N\kn}\big\{ e^{-\N\kn/xz} - e^{-\N\kn/z} \big\} = S_3,
\end{aligned}
\]
say. By both the lower and upper bound of \eqref{AS-SieveParameters_z}, we may lower bound $S_2$ using \cref{Lambda_InitialSum} and upper bound $S_3$ using \cref{Lambda_ShortSum}. Combining these estimates yields the desired bound for $S_1$ for $\eta \geq \eta(\delta)$.  
\end{proof}
\section{Proof of \cref{Theorem1}}
\label{sec:TheoremProof}
We claim \cref{Theorem1} is a consequence of the following result.
\begin{thm} \label{T1-MainTheorem}  Suppose $\psi \pmod{\kq}$ is an real Hecke character of the number field $K$ with associated real zero $\beta$ as in \eqref{RealZero}. Let $\cC \in \Cl(\kq)$ satisfy $\psi(\cC) = 1$ and $\delta > 0$ be given. Denote $X$ as per \cref{AS-LocalDensities}. Assume $x$ satisfies both of the following
\begin{align}
\label{T1-xRange_U}
x &  \leq (n_K^{n_K} d_K \N\kq)^{100} e^{M_{\delta}n_K}, \\
\label{T1-xRange_L1}
x & \geq \{ (\kappa_{\psi}^{-1}+1)^{4} W_{\psi} Q_{\psi}^4  h(\kq)^4 \}^{1+50\delta} e^{M_{\delta}n_K},
\end{align}
for $M_{\delta} > 0$ sufficiently large. If $\psi$ is quadratic then
\begin{align*}
& \sum_{\substack{ \N\kp < x \\ \kp \in \cC}} \rho(\kp)  \geq 0.00466 \cdot X
\end{align*}
provided $\eta \geq \eta(\delta)$ and additionally
\begin{align}
\label{T1-xRange_L2-Quadratic}
x & \geq  \{ (\kappa_{\psi}^{-1}+1)^{5} W_{\psi}^{5/2} Q_{\psi}^2 h(\kq)^2 \}^{1+50\delta} e^{M_{\delta}n_K}.
\end{align}
Otherwise, if $\psi$ is principal then
\begin{align*}
& \sum_{\substack{ \N\kp < x \\ \kp \in \cC}} \rho(\kp)  \geq 0.0557 \cdot X
\end{align*}
provided $\eta \geq \eta(\delta)$ and additionally
\begin{align}
\label{T1-xRange_L2-Principal}
x & \geq  \{ (\kappa_{\psi}^{-1}+1)^3 W_{\psi}^{3/2} Q_{\psi}^2 h(\kq)^2 \}^{1+50\delta} e^{M_{\delta}n_K}.
\end{align}
\end{thm}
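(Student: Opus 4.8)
The plan is to recognise $\sum_{\N\kp<x,\,\kp\in\cC}\rho(\kp)$ as the ``one prime factor'' contribution to the sifted sequence $\cA=\cA(x)$ of \eqref{AS-SieveSequence}, and to isolate it by a Buchstab decomposition in which every other piece is controlled either by the zero-dimensional Fundamental Lemma (\cref{FundamentalLemma}) or by the large-prime estimates (\cref{AS-LargePrimes,AS-PsiLargePrimes}) and the sparsity bound \cref{AS-PsiSmallPrimes}. Throughout, $\eta\ge\eta(\delta)$ makes $(1-\beta)\log x=O(\eta^{-1})$ small, which is exactly what forces the sieve problem to behave as if it had dimension $0$.

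First I would record the total mass: \cref{AS-LocalDensities} with $\kd=\cO$ gives $|\cA|=X+r_{(1)}$, and the hypothesis \eqref{T1-xRange_L1}, compared against $X\gg\kappa_\psi x/h(\kq)\gg(1-\beta)x/h(\kq)$ from \cref{kappa_LB_zero}, makes $r_{(1)}=o(X)$, so $|\cA|=(1+o(1))X$. Then I would fix the sifting level $z$ near the lower end of its admissible range \eqref{AS-SieveParameters_z} and set $D,\tau$ as in \eqref{AS-SieveParameters}. The role of \eqref{T1-xRange_L2-Quadratic} (resp.\ \eqref{T1-xRange_L2-Principal}) is precisely to guarantee, after this choice of $z$, that $\tau\ge 5$ (resp.\ $\tau\ge 3$); by \cref{AS-Dimension} the dimension is $0$ with constant $C_\psi=e^{2+\delta}$ (resp.\ $e^{1+\delta}$), so these values of $\tau$ push the defect functions $E_0(C_\psi;\tau)$ and $E_1(C_\psi;\tau)$ of \cref{FundamentalLemma} below explicit numerical thresholds.

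Next comes the Buchstab decomposition. Grouping each ideal in the support of $\cA$ by its smallest prime factor,
\[
S(\cA,z)=a_\cO+\sum_{\substack{\N\kp\ge z\\\kp\in\cP}}a_\kp+\sum_{\substack{\N\kp\ge z\\\kp\in\cP}}\bigl(S(\cA_\kp,\kp)-a_\kp\bigr),
\]
while \cref{AS-SmallPrimes} gives $S(\cA,z)\ge\bigl(1-E_0(C_\psi;\tau)+o(1)\bigr)XV(z)$. Dropping the weights $e^{-\N\kp/x}\le1$ and adding back the primes with $\N\kp<z$ (negligible, by \cref{AS-PsiSmallPrimes} and $z\le(n_K^{n_K}d_K\N\kq)^{O_\delta(1)}$) yields $\sum_{\N\kp<x,\,\kp\in\cC}\rho(\kp)\ge\sum_{z\le\N\kp<x,\,\kp\in\cP\cap\cC}\rho(\kp)e^{-\N\kp/x}$, and the primes with $z\le\N\kp<x^{1/2}$ contribute only $O\bigl(X(1-\beta)\log x\bigr)=o(X)$ here, by \cref{AS-PsiLargePrimes}. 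The remaining work is to bound the error terms: for the last sum above with $z\le\N\kp<x^{1/2}$ use $S(\cA_\kp,\kp)\ll XV(z)/\N\kp$ from \cref{AS-LargePrimes}, then \cref{AS-PsiLargePrimes} to sum, getting $o(XV(z))$; for $\N\kp\ge x^{1/2}$ the ideal has norm $\ge x$, and that contribution, together with the prime tail $\sum_{\N\kp\ge x}a_\kp$, is estimated by running the upper-bound half of \cref{FundamentalLemma} on the part of $\cA$ with $\N\kn\ge x$ (using $e^{-t/x}\le e^{-\delta/(1+\delta)}e^{-t/((1+\delta)x)}$ on that range, and bounding the resulting remainders by Mellin inversion exactly as in the proof of \cref{AS-SmallPrimes} via \cref{F_psi-Lemma}). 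Assembling the pieces gives
\[
\sum_{\N\kp<x,\,\kp\in\cC}\rho(\kp)\ge\bigl[(1-E_0(C_\psi;\tau))\,V(z)-\Theta(C_\psi;\tau)-o(1)\bigr]X,
\]
with $\Theta(C_\psi;\tau)$ an explicit function coming from the tail and almost-prime terms; putting $\tau=5,\ C_\psi=e^{2+\delta}$ (resp.\ $\tau=3,\ C_\psi=e^{1+\delta}$), optimising over the admissible $z$ in \eqref{AS-SieveParameters_z}, and letting $\delta\to0$, the bracket is $\ge 0.00466$ (resp.\ $\ge 0.0557$), which is the assertion. The same argument with the roles of $E_0$ and $E_1$ interchanged produces the matching upper bound recorded in the remarks after \cref{Theorem1}.

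The hard part will be this last step: the bracket above is a difference of two quantities of the same order of magnitude, so the crude bound (tail)\,$\le|\cA|$ is useless and the upper-bound sieve must be applied with care; at the same time $V(z)$ helps the main term but constrains $\tau$ through $C_\psi$, so the choice of $z$ has to be balanced. It is this tension, together with $C_\psi=e^{2+\delta}$ versus $e^{1+\delta}$ and hence the need for $\tau\ge5$ versus $\tau\ge3$, that makes the quadratic constant an order of magnitude smaller than the principal one, and that pins down the precise lower bounds \eqref{T1-xRange_L2-Quadratic} and \eqref{T1-xRange_L2-Principal}.
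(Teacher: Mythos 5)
Your skeleton --- the sequence \eqref{AS-SieveSequence}, $z$ at the bottom of \eqref{AS-SieveParameters_z}, a Buchstab decomposition, \cref{AS-SmallPrimes} for the level-$z$ lower bound, \cref{AS-LargePrimes,AS-PsiLargePrimes} to discard $z\le\N\kp<\sqrt{x}$, and an upper-bound sieve for the surviving ideals of norm $\ge x$ --- is indeed the paper's, but the step you yourself flag as ``the hard part'' is a genuine gap, not merely a delicate computation. With the weight $e^{-\N\kn/x}$, the tail you must subtract is the sum of $a_\kn$ over surviving $\kn$ with $\N\kn\ge x$, and the only damping your proposal extracts on that range is the factor $e^{-\delta/(1+\delta)}$ from $e^{-t/x}\le e^{-\delta/(1+\delta)}e^{-t/((1+\delta)x)}$, which tends to $1$ as $\delta\to 0$. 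Hence the best your upper-bound sieve can give for the tail is essentially $\bigl(1+E_1(C_\psi;\tau)\bigr)XV(z)$, to be set against a main term of at most $\bigl(1-E_0(C_\psi;\tau)\bigr)XV(z)$; since $E_0,E_1\ge 0$ this difference is negative for every admissible $z$ (numerically, for $C_\psi=e^{2+\delta}$ and $\tau$ just above $5$ one has $1-E_0\approx 0.29$ against $1+E_1\approx 3.2$; for $C_\psi=e^{1+\delta}$ and $\tau>3$, about $0.88$ against $1.02$). Sieving the tail at level $\sqrt{x}$ instead of $z$ gains nothing either, since by \cref{AS-PsiSmallPrimes} and \cref{AS-PsiLargePrimes} the primes between $z$ and $\sqrt{x}$ change $V$ only by a factor $1+O(\eta^{-1})$. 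So the bracket in your final display cannot be made positive by optimizing $z$, let alone reach $0.00466$ or $0.0557$.

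The missing idea is the paper's free damping parameter: one sieves $\cA^{(y)}$ with $a^{(y)}_\kn=\rho(\kn)e^{-y\N\kn/x}\,\mathbf{1}\{\kn\in\cC\}$ for a constant $y\in[1,10]$ (equivalently $\cA(x/y)$), runs the Buchstab identity at level $\sqrt{x}$, and then notes that on the tail $\N\kn\ge x$ one has $e^{-y\N\kn/x}\le e^{-(y-1)}e^{-\N\kn/x}$, so the surviving tail $S_2$ satisfies $S_2\le e^{-(y-1)}S(\cA^{(1)},z)$, to which the upper-bound half of \cref{AS-SmallPrimes} applies. This costs only a factor $1/y$ in the main term, giving the bracket $\tfrac{1}{y}\bigl(1-E_0(C_\psi;\tau_y)\bigr)-e^{1-y}\bigl(1+E_1(C_\psi;\tau_y)\bigr)+O(\delta)+O_\delta(1/\log x)$, which is positive and is optimized at $y\approx 7.37$ (quadratic) and $y\approx 4.54$ (principal); bounding $V(z)\ge C_\psi^{-1}$ via \cref{AS-Dimension} then produces exactly the constants $0.00466$ and $0.0557$. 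Apart from this device, the rest of your outline (the Mellin-inversion treatment of remainders, and the role of \eqref{T1-xRange_L2-Quadratic}, \eqref{T1-xRange_L2-Principal} in forcing $\tau_y>5$, respectively $\tau_y>3$) matches the paper's proof.
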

\begin{rem*} Recall $\kappa_{\psi}, W_{\psi}, Q_{\psi}$ are defined in \eqref{def:kappa_psi}, \eqref{def:W_psi}, and \eqref{def:Q_psi} respectively. 
\end{rem*}
\noindent
This section is dedicated to the proofs of \cref{Theorem1,T1-MainTheorem}. 

\subsection{Proof of \cref{Theorem1} from \cref{T1-MainTheorem}} 
By comparing notation\footnotemark, one can verify that it suffices show that \eqref{Theorem1-xRange} implies \eqref{T1-xRange_L1} and \eqref{T1-xRange_L2-Quadratic} when $\psi$ is quadratic and  similarly implies  \eqref{T1-xRange_L1} and \eqref{T1-xRange_L2-Principal} when $\psi$ is principal. 
\footnotetext{Note that $\rho(\kp) = 1$ and $\Delta_{\psi} = b_{\psi} $ if $\psi$ is principal, and $\rho(\kp) = 2$ and $\Delta_{\psi} = L(1,\psi) b_{\psi}/2$ if $\psi$ is quadratic.}
 If $\psi$ is quadratic, then by \cref{kappa_LB_effective} and \cref{PR-RayClassGroup},
\begin{align*}
  (\kappa_{\psi}^{-1}+1)^4 W_{\psi} Q_{\psi}^4  h(\kq)^4 & \ll n_K^{12 n_K} d_K^{5+\tfrac{4}{n_K}} (\N\kq)^{5+\tfrac{2}{n_K}} \cdot h(\kq)^2 e^{2n_K}, \\
  (\kappa_{\psi}^{-1}+1)^{5} W_{\psi}^{5/2} Q_{\psi}^2  h(\kq)^2 & \ll n_K^{16 n_K} d_K^{6+\tfrac{5}{n_K}} (\N\kq)^{3.5+\tfrac{2.5}{n_K}} \cdot h(\kq)^2.
\end{align*}
One can therefore see by inspection that \eqref{Theorem1-xRange} indeed implies \eqref{T1-xRange_L1} and \eqref{T1-xRange_L2-Quadratic}.

If $\psi$ is principal, then similarly
\begin{align*}
  (\kappa_{\psi}^{-1}+1)^4 W_{\psi} Q_{\psi}^4  h(\kq)^4 & \ll n_K^{6 n_K} d_K^{3+\tfrac{4}{n_K}} (\N\kq)^3 \cdot h(\kq)^2 e^{2n_K}, \\
  (\kappa_{\psi}^{-1}+1)^3 W_{\psi}^{3/2} Q_{\psi}^2  h(\kq)^2 & \ll n_K^{5 n_K} d_K^{2+\tfrac{3}{n_K}} (\N\kq)^{0.5} \cdot h(\kq)^2.
\end{align*}
Again, one can see by inspection that \eqref{Theorem1-xRange} implies \eqref{T1-xRange_L1} and \eqref{T1-xRange_L2-Principal}.
\hfill \qed 
\subsection{Proof of \cref{T1-MainTheorem}} 
\label{subsec:T1-Proof} 
Let $y \in [1,10]$ be a parameter which is to be optimized later. Consider the sequence\footnotemark
\[
\cA^{(y)} =  \{ a_{\kn}^{(y)} \}_{\kn} \qquad \text{given by} \qquad a_{\kn}^{(y)} = \rho(\kn) e^{- y \N\kn/x} \cdot \mathbf{1}_{\{\kn \in \cC\}}
\]
where $\rho(\kn)$ is defined in \eqref{def:rho}. 
\footnotetext{Comparing with the notation of \eqref{AS-SieveSequence}, notice $\cA(x/y) = \cA^{(y)}$.}
For $B_{\delta} > 0$ sufficiently large, choose
\begin{equation*}
z = \{\kappa_{\psi}^{-1}+1\}^{1+\delta}  \cdot W_{\psi}^{1/2+\delta} (\N\kq)^{\delta} e^{B_{\delta} n_K}
\label{T1-SiftingLevel}
\end{equation*}
so $z$ indeed satisfies \eqref{AS-SieveParameters_z}. 
Analogous to \eqref{AS-SieveParameters}, define 
\begin{equation*}
D_y = \frac{(x/y)^{1-4\delta}}{h(\kq)^2 Q_{\psi}^{2+2\delta}}, \qquad \tau_y = \frac{\log D_y}{\log z}.
\label{T1-SieveParameters}
\end{equation*}
Furthermore, according to the notation of \cref{AS-LocalDensities}, denote
\[
X = b_{\psi} \kappa_{\psi} \frac{x}{h(\kq)}, \qquad V(z) = \prod_{\N\kp < z} (1-g(\kp)). 
\]
Now, by \cref{LambdaAndRho}, $\cA^{(y)}$ is supported on $\kn$ satisfying $\kp \mid \kn \implies \psi(\kp) =1$. Thus, we have the following Buchstab identity:
\begin{equation}
S(\cA^{(y)}, \sqrt{x}) = S(\cA^{(y)},z) - \sum_{\substack{ z \leq \N\kp < \sqrt{x} \\ \psi(\kp) = 1} } S(\cA^{(y)}_{\kp}, \kp).
\label{T1-BuchstabIdentity}
\end{equation}
Noting $a_{\kn}^{(y)} \leq a_{\kn}^{(1)}$, it follows $S(\cA^{(y)}_{\kp}, \kp) \leq S(\cA^{(1)}_{\kp}, \kp)$. Moreover,   $(1-\beta) \log x \ll_{\delta} \eta^{-1}$ by \eqref{T1-xRange_U}, and so from \eqref{T1-xRange_L1} and \cref{AS-LargePrimes,AS-PsiLargePrimes} it follows
\begin{equation}
\sum_{\substack{ z \leq \N\kp < \sqrt{x} \\ \psi(\kp) = 1} } S(\cA^{(y)}_{\kp}, \kp) \ll_{\delta} \eta^{-1} \cdot X V(z)
\label{T1-LargePrimes}
\end{equation}
provided $\eta \geq \eta(\delta)$. Assumption \eqref{T1-xRange_L1} allows us to apply \cref{AS-SmallPrimes} to $S(\cA^{(y)},z)$ so combined with  \eqref{T1-BuchstabIdentity} and \eqref{T1-LargePrimes}, we  deduce
\begin{equation}
\label{T1-Weighted}
S(\cA^{(y)}, \sqrt{x}) \geq \frac{1}{y}  \Big\{ 1- E_0(C_{\psi}; \tau_y)  + O(\delta) + O_{\delta}\big( \frac{1}{\log x}\big)  \Big\} \cdot XV(z)
\end{equation}
provided $\eta \geq \eta(\delta)$. It remains to convert the ``exponentially-weighted sieve" to the usual ``cutoff sieve". Observe
\begin{equation}
\label{T1-Cutoff}
\begin{aligned}
S(\cA^{(y)}, \sqrt{x}) & =  \sum_{\substack{ \kp \in \cC \\ \N\kp < x} } \rho(\kp) e^{-y\N\kp/x} + \sum_{\substack{ \kn \in \cC \\ (\kn, \kP(\sqrt{x}))=1 \\ \N\kn \geq x} } \rho(\kn) e^{- y \N\kn/x}, \\
& =  S_1 + S_2
\end{aligned}
\end{equation}
say. To complete the proof, it suffices to lower bound $S_1$ and so we require an upper bound on $S_2$. As $y \geq 1, z \leq \sqrt{x}$ and $x$ satisfies \eqref{T1-xRange_L1}, it follows by \cref{AS-SmallPrimes} that
\[
S_2 \leq e^{-y+1} S(\cA^{(1)}, z) \leq e^{-y+1} \Big\{1 + E_1(C_{\psi}; \tau_y) + O(\delta) + O_{\delta}\big( \frac{1}{\log x} \big) \Big\} \cdot X V(z).
\]
Using the above, \eqref{T1-Weighted}, and \eqref{T1-Cutoff}, we conclude for $\eta \geq \eta(\delta)$
\begin{equation}
S_1  \geq \frac{1}{C_{\psi}} \Big\{ \frac{1}{y}\Big(1- E_0(C_{\psi}; \tau_y) \Big) - e^{-y+1} \Big(1+ E_1(C_{\psi}; \tau_y) \Big) + O(\delta) + O_{\delta}\big( \frac{1}{\log x}\big) \Big\} \cdot X
\label{T1-LowerBound}
\end{equation}
after bounding $V(z)$ by \cref{AS-Dimension}. Finally, we consider cases.
\subsubsection{$\psi$ quadratic} Then \eqref{T1-xRange_L2-Quadratic} and our choice of $z$ imply $\tau_y > 5$, so $n_0(\tau_y) \geq 6$ and $n_1(\tau_y) \geq 5$. Hence, by the definitions in \cref{FundamentalLemma}, 
\begin{align*}
E_0(C_{\psi}; \tau_y) & \leq \big(\tfrac{1}{2}e^4 - \tfrac{11}{3}e^2 + \tfrac{1}{2}\big) \{ 1 + O(\delta) \} , \\
E_1(C_{\psi}; \tau_y) & \leq \big(\tfrac{1}{2}e^4 - \tfrac{10}{3}e^2 - \tfrac{1}{2}\big) \{ 1 + O(\delta) \}, 
\end{align*}
since $C_{\psi} = e^{2+\delta}$ by \cref{AS-Dimension}. Substituting these bounds into \eqref{T1-LowerBound}, choosing roughly optimally $y = 7.37$, and rescaling $\delta$ appropriately completes the proof of \cref{T1-MainTheorem} when $\psi$ is quadratic.
\subsubsection{$\psi$ principal} Then \eqref{T1-xRange_L2-Principal} and our choice of $z$ imply $\tau_y > 3$, so $n_0(\tau_y) \geq 4$ and $n_1(\tau_y) \geq 3$. Hence, by the definitions in \cref{FundamentalLemma}, 
\begin{align*}
E_0(C_{\psi}; \tau_y) & \leq \big(\tfrac{1}{2}e^2 - \tfrac{3}{2}e + \tfrac{1}{2}\big) \{ 1 + O(\delta) \} , \\
E_1(C_{\psi}; \tau_y) & \leq \big(\tfrac{1}{2}e^2 - e - \tfrac{1}{2}\big) \{ 1 + O(\delta) \} ,
\end{align*}
since $C_{\psi} = e^{1+\delta}$ by \cref{AS-Dimension}. Substituting these bounds into \eqref{T1-LowerBound}, choosing roughly optimally $y = 4.54$, and rescaling $\delta$ appropriately completes the proof of \cref{T1-MainTheorem} when $\psi$ is principal.

\bibliographystyle{alpha}
\bibliography{biblio}

\end{document}